\documentclass{article}
\usepackage{graphicx} 

\usepackage[swedish,english]{babel}
\usepackage[utf8]{inputenc}
\usepackage[T1]{fontenc}
\usepackage{amssymb}
\usepackage{multirow}

\usepackage{lipsum}


\usepackage[explicit]{titlesec}
\usepackage{amsmath}
\usepackage{amsthm}
\usepackage{bbm}
\usepackage{graphicx}
\usepackage{subcaption}
\usepackage[colorinlistoftodos]{todonotes}
\usepackage{tablefootnote}
\usepackage{hyperref}
\usepackage{float}
\usepackage{algpseudocode}
\usepackage{algorithm}
\usepackage{placeins}
\usepackage{changes}

\newtheorem{theorem}{Theorem}[]
\newtheorem{corollary}{Corollary}[section]
\newtheorem{lemma}[]{Lemma}
\newtheorem{proposition}{Proposition}[section]

\theoremstyle{definition}

\DeclareMathOperator{\diag}{diag}

\DeclareMathOperator{\tr}{tr}

\title{Novel Limited Memory Quasi-Newton Methods Based On Optimal Matrix Approximation}
\author{Erik Berglund and Mikael Johansson}
\date{March 2024}

\begin{document}

\maketitle

\begin{abstract}
    Update formulas for the Hessian approximations in quasi-Newton methods such as BFGS can be derived as analytical solutions to certain nearest-matrix problems. In this article, we propose a similar idea for deriving new limited memory versions of quasi-Newton methods. Most limited memory quasi-Newton methods make use of Hessian approximations that can be written as a scaled identity matrix plus a symmetric matrix with limited rank. We derive a way of finding the nearest matrix of this type to an arbitrary symmetric matrix, in either the Frobenius norm, the induced $l^2$ norm, or a dissimilarity measure for positive definite matrices in terms of trace and determinant. In doing so, we lay down a framework for more general matrix optimization problems with unitarily invariant matrix norms and arbitrary constraints on the set of eigenvalues. We then propose a trust region method in which the Hessian approximation, after having been updated by a Broyden class formula and used to solve a trust-region problem, is replaced by one of its closest limited memory approximations. We propose to store the Hessian approximation in terms of its eigenvectors and eigenvalues in a way that completely defines its eigenvalue decomposition, as this simplifies both the solution of the trust region subproblem and the nearest limited memory matrix problem. Our method is compared to a reference trust region method with the usual limited memory BFGS updates, and is shown to require fewer iterations and the storage of fewer vectors for a variety of test problems. 
\end{abstract}

\section{Introduction}

With the ever-increasing size of modern datasets, large-scale optimization remains an area with relevant applications. Optimization plays a foundational role in various machine learning approaches as they aim to minimize the discrepancy between model predictions and observed data \cite{SurveyofOptimizationML}. By using the empirical risk minimization principle \cite{ERM}, one often ends up needing to solve an unconstrained, nonlinear optimization problem. A particular challenge arises when the objective function depends on large amounts of data. In this case, function and gradient evaluations are expensive. In the classical parallelization strategy where gradient and function evaluations are distributed on multiple worker machines, the master node performing the iterate update tends to be idle waiting for the workers to return~\cite{AAF+:20}. This motivates the development of methods that can make as much use of previous function and gradient evaluations as possible under memory constraints, exploiting otherwise idle times of the master to perform increased computational work and reduce the total number of iterations needed to reach a target accuracy.

Limited memory quasi-Newton algorithms have seen much success for nonlinear optimization problems and implemented in various commercial software packages \cite{Numerical_optimization}. While 
limited memory quasi-Newton methods have traditionally been used for solving problems in a deterministic setting,  \cite{MultiBFGS} demonstrated that they can work in a stochastic setting with overlap between sampled batches. Ways to adapt limited memory quasi-Newton methods to stochastic problems is an active area of research \cite{SurveyML}, although it is not within the scope of this paper.

When considering the trade-off between computations per iteration, memory requirements, and the number of iterations, limited memory quasi-Newton methods can be seen as a hybrid of conjugate gradient methods and ordinary quasi-Newton methods. Shanno showed that a particular conjugate gradient method can be seen as a memoryless BFGS method \cite{Shanno_CG}. Requiring slightly more computational effort than conjugate gradient, while still being viable for large-scale problems, limited-memory quasi-Newton methods may strike a better trade-off between computations per iteration and the number of iterations to converge for some problems \cite{NumericalStudy}. 

Many quasi-Newton methods have been shown to obey a minimum-change principle when updating their Hessian approximations: They choose the closest matrix to the previous matrix in some distance measure, that satisfies the secant equation \cite{Papakonstantinou2009HistoricalDO}. This minimum change principle was proposed by Greenstadt \cite{Greenstadt1970VariationsOV}, and was used by Goldfarb \cite{Goldfarb_VM} to derive BFGS. One can motivate such a principle in two ways. Firstly, it discourages the cumulative growth of the Hessian approximations, making the method stable. Secondly, it encourages retaining as much information encoded in the previous matrix as possible. The typical limited memory variants of quasi-Newton methods, however, depart from this principle: They build Hessian approximations from stored iterates and gradients from the last $m$ iterations, discarding information from all previous iterations. While doing so limits memory requirements, it also limits how many iterations that a particular gradient and iterate can be taken into account. Thus, there is room for alternative approaches, that aim to limit memory while retaining information from all older iterations.

An important thing to note about limited memory quasi-Newton methods is the particular structure of their Hessian approximations. Starting with an identity matrix multiplied by a scalar and performing a limited number of quasi-Newton updates by methods in the Broyden class results in the sum of the initial scaled identity matrix, and a symmetric matrix of limited rank. This matrix structure is important in other contexts as well. Zha and Zhang \cite{Lowrankplusshift} denoted it as "low-rank plus shift", and used it to solve matrix approximation problems in latent semantic indexing. Explicit representations of Hessian approximations on this form were first derived for a few methods by Byrd, Nocedal and Schnabel \cite{Byrd_representations}. Later, different compact representations were derived for the full Broyden class by DeGuchy, Erway and Marcia \cite{Compact_representation_full_Broyden} and by Ek and Forsgren \cite{LM_QN_quadratic}. Such particular representations of Hessian approximations can be vital for the development of efficient quasi-Newton methods for large-scale problems. As an example of applications, Burdakov, Gong, Yuan and Zikrin \cite{Combining_TR_and_LQN} proposed several ways to efficiently combine limited memory quasi-Newton updates with a trust region method. By making use of the eigenvalue decomposition of the Hessian approximation, they showed that it is possible to solve the trust region subproblem in a much smaller subspace, spanned by the eigenvalues of the Hessian approximation. This eigenvalue decomposition could be obtained with low computational effort thanks to making use of the low-rank plus shift structure of the Hessian approximation.  

The usefulness of a low-rank plus shift structure is apparent when deriving alternative limited memory quasi-Newton methods. For example, the limited memory variable metric methods of Vl\v{c}ek and Luk\v{s}an \cite{Limited_VM_methods} are based on a factorization of a symmetric rank $m$ matrix as a $n \times m$ matrix multiplied by its transpose, where $n$ is the number of variables of the problem. They derive formulas for updating the rectangular factors of the low-rank matrices in the Hessian approximations, such that the distance between the old and the updated rectangular factor in a weighted Frobenius norm is minimized, under the constraint that the secant equation is satisfied. In the context of solving general nonlinear systems of equations, quasi-Newton methods can be used to generate an approximate Jacobian. Even in this setting, the matrix approximation, when initialized as a scaled identity matrix, will throughout the iterations be equal to the sum of this scaled identity matrix and a matrix with an upper limit to its rank proportional to the iteration number, although this matrix is in general not symmetric. In a paper from 2003, van de Rotten and Lunel \cite{VDR} use singular value decomposition of the low-rank matrix, setting the lowest nonzero singular value to 0 to keep the rank below a fixed value. The resulting limited memory Broyden's method is then used to solve large nonlinear systems of equations arising in simulations of a chemical reactor. Both the aforementioned methods focus on the low-rank matrix in their matrix distance measures, leaving the scaling factor to be determined by other means.

Another approach to limited memory quasi-Newton methods focuses on being able to reproduce the iterations of its full-memory counterpart when that is possible. The aggregated BFGS method by Berahas, Curtis and Zhou \cite{AGG_BFGS} initially stores pairs of differences in gradients and variables, but after reaching the limit on the number of such pairs that can be stored, modifies these pairs along the iterations to aggregate curvature information. This enables the algorithm to store the same matrix as the one obtained by the full memory BFGS method when the differences between consecutive iterates lie in a limited subspace. 

While following a least-change property and being able to reproduce the iterations of a full-memory quasi-Newton method are both desirable properties, there is to the best of our knowledge no previous limited-memory quasi-Newton method that has both of them. However, if one were to consider the structure of low-rank plus shift and could optimize both the low-rank matrix and the scaling factor of the identity matrix simultaneously, there is great potential to develop such methods. Although applying the solution of the matrix optimization problems involved could require more computational effort than conventional limited memory quasi-Newton methods per iteration, the trade-off could be worthwhile if it decreases the total number of iterations needed. This leads us to the research presented in this paper.  

In this paper, we present solutions to several matrix optimization problems involving matrices with the low-rank plus shift structure and show how they can be applied to provide a framework for limited memory adaptions of quasi-Newton methods in the Broyden class. In contrast to~\cite{Limited_VM_methods,VDR}, the algorithms in this framework are derived based on optimizing both the low-rank and the shift part of the limited-memory representation, as opposed to only the low-rank factors. 

In the first part of the paper, we start by considering a general class of problems, where a unitarily invariant norm of the difference of an $n \times n$ variable matrix and a fixed matrix of the same size shall be minimized, with an arbitrary constraint on the set of eigenvalues of the variable matrix. We show that for every problem in this class, there is an optimal solution where the variable matrix has the same eigenvectors as the constant matrix, implying that only the eigenvalues need to be used as optimization variables. This reduces the number of variables in the problem from $n^2$ to $n$. We also show a similar result for other matrix distance measures. We then characterize limited memory quasi-Newton Hessian approximations as symmetric matrices with an eigenvalue of multiplicity $n-m$ where $m\ll n$, showing that finding the closest matrix with this structure is a problem of the type considered. Finally, we derive explicit solutions to some matrix optimization problems where a constraint is that the solution must belong to this class of matrices. 

In the second part of this paper, we apply the results of the first to find a new way of adapting Broyden class methods to the limited memory setting. Special attention is given to organizing the computations in a numerically robust manner while ensuring a linear scaling of the computational complexity in the number of variables. Even though our method relies on eigendecomposition, we demonstrate that the per-iteration cost is of $O(nm^2+m^3)$, as by taking advantage of the structure of the matrices, one only needs to decompose an $m \times m$ symmetric matrix, where $m \ll n$. While our framework encompasses a multitude of algorithms that one could implement, we limit the scope of the implementation section by focusing on limited memory versions of BFGS. Inspired by Burdakov et al. \cite{Combining_TR_and_LQN}, we take advantage of the availability of eigenvectors and eigenvalues inherent in our limited memory adaptions of BFGS to create two trust-region algorithms. These algorithms are tested on logistic regression problems, randomly generated quadratic problems, and a suite of CUTEst problems, along with a trust-region method using the conventional L-BFGS Hessian approximation as well as the conventional line-search implementation. We find that the extra work needed for updating the eigendecomposition of the Hessian approximation in our two tested algorithms is compensated for by a significantly faster convergence for the logistic regression and randomly generated quadratic programming problems, and a competitive performance profile concerning function evaluations compared to the line-search implementation of L-BFGS on the CUTEst test problems.

\section{Nearest matrix problems}\label{sec:NLMMP}

In this section, we will analyze several matrix optimization problems. We will move from the general to the specific, starting by deriving structural results for minimization of some unitarily invariant matrix dissimilarity measures under eigenvalue constraints, and ending with efficient algorithms for specific nearest matrix problems tailored to the development of quasi-Newton methods.

\subsection{Eigenvalue-constrained optimization of unitarily invariant matrix dissimilarity measures}

There are many matrix dissimilarity measures, from the standard matrix norms to more exotic divergences, see~\cite{VeJ:15,LeW:18}. We will focus on unitarily invariant measures for which we will be able to derive efficient solutions to the relevant nearest matrix problems.
Using the lemmas given in the appendix (Section~\ref{sec:matrixdistance}), the following statements can be made:

\begin{theorem} \label{thm:eigenv}
Consider the matrix optimization problem
\begin{equation*}
\begin{aligned}
& \underset{X}{\text{minimize}}
& & \| X - A \| \\
& \text{subject to}
& & Eig(X) \in S \\
& & & X = X^T, X \in {\mathbb{R}}^{n \times n}
\end{aligned}
\end{equation*}
where $\| \cdot  \|$ is any unitarily invariant norm, $A$ is a real symmetric matrix and $Eig(X) \in S$ is an arbitrary constraint on the multiset of eigenvalues of $X$. If this problem has at least one optimal solution, then it has an optimal solution $\widehat X$ with the same eigenvectors as $A$.
\end{theorem}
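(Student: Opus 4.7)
The plan is to exploit unitary invariance of the norm together with a Hoffman--Wielandt/Mirsky-type inequality for symmetric matrices (which is presumably what the appendix lemmas provide): for any real symmetric matrices $X, A$ with eigenvalues ordered as $\lambda_1 \geq \cdots \geq \lambda_n$ and $d_1 \geq \cdots \geq d_n$, every unitarily invariant norm satisfies $\|X - A\| \geq \|\Lambda - D\|$, where $\Lambda = \diag(\lambda_1, \ldots, \lambda_n)$ and $D = \diag(d_1, \ldots, d_n)$.

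Concretely, I would argue as follows. Let $X^{*}$ be any optimal solution, and write $A = P D P^T$ for an orthogonal $P$ and a diagonal $D$ whose entries are the eigenvalues of $A$ arranged in a fixed order (say, decreasing). Let $\Lambda$ be the diagonal matrix containing the eigenvalues of $X^*$, arranged in the same order relative to $D$ that the appendix lemma requires. Define the candidate
\begin{equation*}
\widehat X := P \Lambda P^T.
\end{equation*}
By construction $\widehat X$ is symmetric, its columns of eigenvectors coincide with those of $A$, and its multiset of eigenvalues equals that of $X^*$; hence $\widehat X$ is feasible, since $Eig(\widehat X) = Eig(X^*) \in S$.

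It then suffices to show $\|\widehat X - A\| \leq \|X^{*} - A\|$. By unitary invariance,
\begin{equation*}
\|\widehat X - A\| = \|P(\Lambda - D)P^T\| = \|\Lambda - D\|,
\end{equation*}
while the appendix lemma applied to $X^{*}$ and $A$ gives $\|X^{*} - A\| \geq \|\Lambda - D\|$. Combining the two yields $\|\widehat X - A\| \leq \|X^{*} - A\|$, and since $X^{*}$ is optimal and $\widehat X$ is feasible, $\widehat X$ is itself optimal with the desired eigenvector property.

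The genuinely nontrivial content is not in the construction of $\widehat X$, which is essentially forced once one knows it should reuse the eigenvectors of $A$, but in the inequality $\|X - A\| \geq \|\Lambda - D\|$ uniformly over all unitarily invariant norms. That bound is the main obstacle and is the reason the theorem relies on the appendix lemmas; it typically follows from the representation of unitarily invariant norms via symmetric gauge functions of singular values together with a majorization argument (Lidskii/Ky Fan type) applied to the differences of eigenvalues of symmetric matrices. Once that lemma is in hand, the reduction above is essentially a one-line consequence, and it also clarifies how to pair the eigenvalues of $X^{*}$ with those of $A$ when constructing $\Lambda$.
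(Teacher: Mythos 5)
Your argument is correct and is essentially the paper's own proof: the paper packages your construction $\widehat X = P\Lambda P^T$ and the inequality $\|X^*-A\| \geq \|\Lambda - D\|$ into Lemma~\ref{lemma:ABC}, which is itself proved exactly as you describe, via Mirsky's inequality (Corollary~\ref{cor:Mirsky}) for unitarily invariant norms. You also correctly identify that the nontrivial content lives in that appendix lemma rather than in the reduction itself.
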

\begin{proof}
Let $\bar X$ be an optimal solution of the problem. Then, Lemma~\ref{lemma:ABC} states that there exists a Hermitian matrix $\widehat X$ with the same eigenvectors as $A$ and the same eigenvalues as $\bar X$ such that $\|A-\widehat X\| \leq \|A -\bar X\|$. Since the eigenvectors of $\widehat X$ are the same as those of $A$ and $\widehat X$ is Hermitian, it must be real and symmetric. Since $\widehat X$ has the same eigenvalues as $\bar X$ and is real and symmetric, it is also a feasible solution, and by optimality of $\bar X$ it must also be optimal.
\end{proof}

\begin{theorem} \label{thm:eigenv_tr_logdet}
Consider the matrix optimization problem
\begin{equation*}
\begin{aligned}
& \underset{X}{\text{minimize}}
& & d(X,A) \\
& \text{subject to}
& & Eig(X) \in S \\
& & & X \succ 0, X \in {\mathbb{R}}^{n \times n}
\end{aligned}
\end{equation*}
where $A$ is a real positive definite matrix, $Eig(X) \in S$ is an arbitrary constraint on the set of eigenvalues of $X$, and the objective function is either the Stein loss
\begin{align*}
    d(X,A) &= \tr (X^{-1}A)-\log\det(X^{-1}A)-n
\intertext{the inverse Stein loss}
    d(X,A) &= \tr(XA^{-1})-\log\det(XA^{-1})-n
\intertext{or the symmetrized Stein loss}
    d(X,A) &= \tr (X^{-1}A)+\tr(XA^{-1})
\end{align*}
If this problem has at least one optimal solution, then it has an optimal solution $\widehat X$ with the same eigenvectors as $A$.
\end{theorem}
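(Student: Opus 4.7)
The plan is to run the same ``align the eigenvectors of $X$ with those of $A$'' argument as in Theorem~\ref{thm:eigenv}, but now using a trace inequality in place of the norm inequality (Lemma~\ref{lemma:ABC}). In all three cases the objective splits into pieces that depend only on the eigenvalues (hence are invariant when we replace the eigenvectors of a candidate solution by those of $A$) plus one or two trace terms of the form $\tr(X^{-1}A)$ or $\tr(XA^{-1})$. The feasibility constraint is unaffected because it involves only the multiset of eigenvalues.

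Let $\bar X$ be an optimal solution with eigenvalues $\lambda_1\geq\cdots\geq\lambda_n>0$, and let $\alpha_1\geq\cdots\geq\alpha_n>0$ be the eigenvalues of $A$. Define $\widehat X = Q_A \widehat\Lambda Q_A^T$, where $Q_A$ is an orthonormal matrix of eigenvectors of $A$ and $\widehat\Lambda=\diag(\lambda_1,\ldots,\lambda_n)$ is arranged so that the eigenvalues of $\widehat X$ appear in the same order (along the eigenvectors of $A$) as the eigenvalues of $A$. Then $\widehat X$ is symmetric, positive definite (same eigenvalues as $\bar X$), has the same multiset of eigenvalues as $\bar X$, and hence is feasible. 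Because $\det(\widehat X)=\det(\bar X)$, the $\log\det$ terms in the Stein and inverse Stein losses are unchanged when $\bar X$ is replaced by $\widehat X$, so the proof reduces to showing
\begin{equation*}
\tr(\widehat X^{-1}A)\leq \tr(\bar X^{-1}A),\qquad \tr(\widehat X A^{-1})\leq \tr(\bar X A^{-1}).
\end{equation*}

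The main technical input is the classical trace inequality for symmetric matrices: if $B$ and $C$ are symmetric with eigenvalues $\beta_1\geq\cdots\geq\beta_n$ and $\gamma_1\geq\cdots\geq\gamma_n$, then $\sum_i \beta_i\gamma_{n+1-i}\leq \tr(BC)\leq \sum_i \beta_i\gamma_i$, with the lower bound attained when $B$ and $C$ are simultaneously diagonalizable with the eigenvalues in opposite orderings. Applying this to $B=\bar X^{-1}$ (eigenvalues $1/\lambda_n\geq\cdots\geq 1/\lambda_1$) and $C=A$ gives $\tr(\bar X^{-1}A)\geq \sum_i \alpha_i/\lambda_i$. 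For $\widehat X$ the construction above aligns the smallest eigenvalue of $\widehat X^{-1}$ with the largest of $A$ (and so on), which attains this lower bound, i.e., $\tr(\widehat X^{-1}A)=\sum_i \alpha_i/\lambda_i$. The same argument with $B=\bar X$, $C=A^{-1}$ yields $\tr(\widehat X A^{-1})\leq \tr(\bar X A^{-1})$. Summing proves both of the required inequalities, and in particular handles the symmetrized Stein loss, where both trace terms improve simultaneously.

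I expect the bulk of the work to lie in invoking (or, if needed, stating and proving) the trace inequality above; the paper already signals that the relevant lemmas are collected in the appendix, so the proof is really just an assembly: fix the $\log\det$ contribution by keeping the eigenvalues, then optimize the trace contribution by choosing the right matching of eigenvalues to eigenvectors. The subtle point is to verify that the reordering needed for the two trace terms in the symmetrized case is the same (it is: both are minimized by matching eigenvalues in the same order), so a single $\widehat X$ works for all three objectives simultaneously. Optimality of $\widehat X$ then follows from optimality of $\bar X$ together with $d(\widehat X,A)\leq d(\bar X,A)$, exactly as in Theorem~\ref{thm:eigenv}.
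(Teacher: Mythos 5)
Your proposal is correct and follows essentially the same route as the paper: the paper's one-line proof defers to Lemma~\ref{lemma:logdetineq} in the appendix, which constructs exactly your $\widehat X$ (eigenvalues of $\bar X$ placed in matching order along the eigenvectors of $A$) and establishes the two trace inequalities via the same classical trace bound (Theorem~\ref{thm:trace_ineq}), with the $\log\det$ terms handled by eigenvalue invariance just as you do. You have simply inlined the lemma rather than citing it.
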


\begin{proof}
The proof proceeds analogously to the proof for Theorem \ref{thm:eigenv}, with the use of Lemma \ref{lemma:logdetineq} instead of Lemma \ref{lemma:ABC}.
\end{proof}

The utility of the above theorems lies in how they can be used to reduce the number of variables in certain optimization problems. If we have access to the eigenvectors of the matrix $A$ in the above problems, we can set the eigenvectors of $X$ equal to them and concern ourselves entirely with choosing the $n$ eigenvalues of $X$ in an optimal way. 

\subsection{Optimal limited memory approximations of matrices}

As mentioned in the introduction, limited memory quasi-Newton methods often lead to matrices with a low-rank plus shift structure. For the remainder of this paper, it will be useful to introduce some notation to refer to sets of such matrices. Hence, the set of $n \times n$ limited memory matrices with memory parameter $m$, denoted ${\mathcal L}_{m,n}$, will be defined as follows: 
\begin{align*} 
{\mathcal L}_{m,n} = \left\{ M \in \mathbb{R}^{n\times n} | M = \alpha I + \sum_{i=1}^m a_i u_i u_i^ T \right \}
\end{align*}

Here, $\alpha$ and $a_i$ are real-valued scalars, and $u_i$ are $n$-dimensional real column vectors. Hence, to define a general matrix with this structure, a total of $nm+m+1$ scalars need to be stored in memory. From Equation \eqref{eqn:limited_Broyden_rep}, it can be seen that the matrices generated by the usual limited memory adaptions of quasi-Newton methods of the Broyden class belong to this set of matrices when up to $\lfloor m/2 \rfloor$ differences in gradient and variable vectors are stored. As will be seen in the following sections, it will prove useful to characterize the matrices by the multiplicity of their eigenvalues. Note that there is no need to distinguish between algebraic and geometric multiplicity for real symmetric matrices. When this distinction is not needed, it will be referred to just as "multiplicity".

\begin{proposition}\label{thm:multiplicity}
${\mathcal L}_{m,n}$ is the set of $n\times n$ symmetric matrices with an eigenvalue of multiplicity at least $n-m$
\end{proposition}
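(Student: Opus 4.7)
The plan is to establish the set equality by proving both inclusions separately, in both cases using the spectral theorem together with the fact that a symmetric rank-$r$ matrix can be written as a sum of $r$ rank-one symmetric outer products.

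For the inclusion ${\mathcal L}_{m,n}\subseteq\{\text{symmetric matrices with an eigenvalue of multiplicity at least }n-m\}$, I would take an arbitrary $M = \alpha I + \sum_{i=1}^m a_i u_i u_i^T \in {\mathcal L}_{m,n}$. Symmetry is immediate since $\alpha I$ and each $a_i u_i u_i^T$ are symmetric. For the eigenvalue count, note that $M - \alpha I = \sum_{i=1}^m a_i u_i u_i^T$ has rank at most $m$, being a sum of $m$ rank-one terms. Consequently its null space has dimension at least $n-m$, and every vector $v$ in that null space satisfies $Mv = \alpha v$. Thus $\alpha$ is an eigenvalue of $M$ with geometric (and hence algebraic) multiplicity at least $n-m$.

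For the reverse inclusion, let $M$ be a symmetric $n \times n$ matrix having some real number $\alpha$ as an eigenvalue of multiplicity at least $n-m$. Then $M - \alpha I$ is symmetric with $0$ as an eigenvalue of multiplicity at least $n-m$, so $\operatorname{rank}(M - \alpha I) \le m$. Invoking the spectral theorem for real symmetric matrices, there exist an integer $r \le m$, real scalars $\lambda_1,\dots,\lambda_r$ and orthonormal vectors $v_1,\dots,v_r \in \mathbb{R}^n$ such that $M - \alpha I = \sum_{i=1}^r \lambda_i v_i v_i^T$. If $r<m$, extend the sum to $m$ terms by choosing arbitrary vectors $v_{r+1},\dots,v_m$ and setting $\lambda_{r+1} = \cdots = \lambda_m = 0$. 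Rearranging gives $M = \alpha I + \sum_{i=1}^m \lambda_i v_i v_i^T \in {\mathcal L}_{m,n}$.

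There is essentially no hard step; the only subtlety is handling the case where the multiplicity strictly exceeds $n-m$ (so that $M - \alpha I$ has rank strictly less than $m$), which is dispatched simply by padding the representation with zero coefficients. Both inclusions together yield the claimed equality.
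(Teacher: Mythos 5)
Your proof is correct and takes essentially the same route as the paper's: the forward inclusion via the $(n-m)$-dimensional null space of the rank-$\le m$ perturbation, and the reverse inclusion via the spectral decomposition of $M-\alpha I$ (the paper equivalently decomposes $M$ itself and subtracts $\lambda I$, with the $n-m$ repeated eigenvalues contributing zero coefficients). The padding with zero coefficients that you make explicit is handled implicitly in the paper by keeping exactly $m$ terms in the sum.
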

\begin{proof}
Let $M = \alpha I + \sum_{i=1}^m a_i u_i u_i^T$. Then, the vectors in the subspace of vectors orthogonal to all vectors $u_i$ are eigenvectors of $M$ with eigenvalue $\alpha$. Since this space is at least $(n-m)$-dimensional, $\alpha$ is an eigenvalue of multiplicity at least $(n-m)$. Conversely, let $M$ be symmetric and have an eigenvalue $\lambda$ of multiplicity at least $(n-m)$. Let $\lambda_1,...,\lambda_n$ be all its eigenvalues, and enumerate them so that $\lambda_{m+1}=...=\lambda_n = \lambda$. Let $v_1,...v_n$ be a basis of corresponding orthonormal eigenvectors. Then, $M = M - \lambda I + \lambda I = \lambda I + \sum_{i=1}^n (\lambda_i - \lambda) v_iv_i^T = \lambda I + \sum_{i=1}^m (\lambda_i - \lambda)v_iv_i^T$, so $M\in {\mathcal L}_{m,n}$.
\end{proof}

By Proposition \ref{thm:multiplicity}, the constraint $X\in {\mathcal L}_{m,n}$ is of the type described in theorems~\ref{thm:eigenv} and~\ref{thm:eigenv_tr_logdet} above, and so when solving the problem of finding a nearest limited memory matrix in the matrix dissimilarity measures considered above, it is sufficient to consider the set of matrices with the same eigenvectors as the matrix from which to minimize the distance. We are now ready to use the previously derived results to solve some nearest limited memory matrix problems, under the constraint that the variable matrix $X \in \mathcal{L}_{m,n}$. 

\begin{theorem}\label{thm:lowest_dist_l2}
Let $A$ be an $n \times n$ real symmetric matrix with eigenvalues $\\ \lambda_1(A),...,\lambda_n(A)$ and eigenvectors $v_1(A),...,v_n(A)$. The optimal solutions to 
\begin{equation*}\label{eqn:l2_problem}
\begin{aligned}
& \underset{X}{\text{minimize}}
& & \| X - A \|_2 \\
& \text{subject to}
& & X \in {\mathcal L}_{m,n} \\
\end{aligned}
\end{equation*}
for which $v_k(X) = v_k(A), \ \forall k \in \{1,...,n\}$ are precisely those such that $\lambda_k(X) \in I_k(\delta) = [\lambda_k(A)-\delta,\lambda_k(A)+\delta] \ \forall k \in \{1,...,n\}$, where $\delta$ is the lowest positive real number such that at least $n-m$ of the intervals $I_k(\delta)$ intersect at the same point, and $n-m$ of the eigenvalues of $X$ lie at the intersection of those intervals.
\end{theorem}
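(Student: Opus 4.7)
The plan is to apply Theorem \ref{thm:eigenv} together with Proposition \ref{thm:multiplicity} to recast this problem as a purely scalar optimization over the eigenvalues of $X$. Since $\|\cdot\|_2$ is unitarily invariant, Theorem \ref{thm:eigenv} guarantees an optimal solution sharing eigenvectors with $A$, so restricting to the candidates with $v_k(X) = v_k(A)$, as the statement does, is consistent. For such $X$, the matrix $X - A$ is diagonal in the common eigenbasis, hence $\|X - A\|_2 = \max_k |\lambda_k(X) - \lambda_k(A)|$. By Proposition \ref{thm:multiplicity}, feasibility $X \in \mathcal{L}_{m,n}$ is equivalent to the existence of a scalar $c$ with $\lambda_k(X) = c$ for at least $n-m$ indices $k$.

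Building on this, I would reformulate the problem as: choose an index set $K$ with $|K| \geq n-m$, a common value $c$, and values $\lambda_k(X)$ for $k \notin K$, so as to minimize $\delta := \max_k |\lambda_k(X) - \lambda_k(A)|$. The inequality $\max_k |\lambda_k(X) - \lambda_k(A)| \leq \delta$ is equivalent to $\lambda_k(X) \in I_k(\delta)$ for every $k$, and for $k \in K$ this forces $c \in \bigcap_{k\in K} I_k(\delta)$. Consequently a given $\delta$ is attainable if and only if there exist $n-m$ of the intervals $I_k(\delta)$ with a common point, and the optimal value $\delta^*$ is the smallest such $\delta$.

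The key technical step is to verify that this minimum is attained. The condition ``some $n-m$ of the closed intervals $I_k(\delta)$ share a point'' is monotone non-decreasing in $\delta$ and holds for all sufficiently large $\delta$; since there are only finitely many index subsets of size $n-m$, and for each such subset the common intersection has endpoints depending continuously (piecewise linearly) on $\delta$, a routine compactness argument shows the set of feasible $\delta$ is closed and therefore contains its infimum $\delta^*$. This step is where I expect the main care is required, in order to handle degenerate configurations in which several index subsets $K$ simultaneously achieve intersection, or in which the intersection reduces to a single point.

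Finally, the ``precisely those'' characterization splits into two directions. For necessity: if $X$ is optimal with eigenvectors aligned to those of $A$, then $\|X-A\|_2 = \delta^*$ forces $\lambda_k(X) \in I_k(\delta^*)$ for every $k$, while $X \in \mathcal{L}_{m,n}$ forces some value $c^*$ to appear at least $n-m$ times among the $\lambda_k(X)$, and the previous inclusion places $c^*$ at a common intersection point of the $n-m$ corresponding intervals. For sufficiency: any choice of $\lambda_k(X) \in I_k(\delta^*)$ with $n-m$ of them set to a common intersection point produces a feasible $X$ with objective at most $\delta^*$, which by minimality of $\delta^*$ must in fact equal $\delta^*$.
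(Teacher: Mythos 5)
Your proposal is correct and follows essentially the same route as the paper's proof: both reduce the problem to minimizing $\max_k|\lambda_k(X)-\lambda_k(A)|$ over eigenvalue assignments with $n-m$ equal entries, and both identify the optimal value as the smallest $\delta$ for which $n-m$ of the intervals $I_k(\delta)$ share a common point. The only difference is that you spell out the attainment of the infimum and the two directions of the ``precisely those'' claim, which the paper leaves implicit.
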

\begin{proof}
When $X$ and $A$ have the same eigenvectors, $\|A-X\|_2 = \max_k |\lambda_k(A)-\lambda_k(X)|$. Let $\delta = \max_k |\lambda_k(A)-\lambda_k(X)|$. Then, $\lambda_k(X) \in I_k = [\lambda_k(A)-\delta,\lambda_k(A) + \delta]$. $\delta$ must be chosen so that at least $n-m$ eigenvalues of $X$ can be equal to each other for the constraint $X \in {\mathcal L}_{m,n}$ to be fulfilled. The optimal value of $\delta$ is thus the lowest one such that at least $n-m$ of the intervals $I_k$ intersect. For that $\delta$, they will intersect at one point, and then it is necessary and sufficient to set $n-m$ eigenvalues corresponding to those intersecting intervals to lie at the point of intersection.  
\end{proof}
Using this characterization of the optimal approximations of $A$ in $\mathcal{L}_{m,n}$ with respect to the $l^2$-norm, computing one is easy if an eigendecomposition of $A$ is available. This can be seen by noting that the first point where $n-m$ uniformly expanding intervals around $n-m$ eigenvalues of A will intersect is the arithmetic average of the highest and lowest eigenvalue in a sequence of consecutive eigenvalues of $A$. The task then becomes checking which of the $m+1$ sequences of $n-m$ consecutive eigenvalues has the lowest distance between the lowest and highest eigenvalues, and then setting them to their arithmetic average, along with the other eigenvalues between them, leaving all other properties of A unchanged, which can be done with complexity $O(n+m)$. 

As we will see, the $l^2$ has one property that sets it apart from all the other matrix dissimilarity measures that we will consider. While the optimization of all of them will involve setting consecutive eigenvalues to some form of mean, the mean only involves two eigenvalues in the sequence for the $l^2$-norm, while the means used for the other matrix dissimilarity measures are taken over all eigenvalues in the sequence. 

To characterize the solutions to the matrix optimization problems that follow, the following lemma will be needed:

\begin{lemma}\label{lemma:consecutive_numbers}
    Let $f(x,z)$ be a function from $\mathbb{R}^2$ to $\mathbb{R}$ such that if $(x - z)(y - z) \geq 0$ and $y > x$, then $\left(f(y,z)-f(x,z)\right) \left(x + y - 2z\right) > 0$. Let $S$ be a finite subset of $\mathbb{R}$, and $M = \{S,\nu_M\}$ be a multiset over $S$ such that $|M|=n$. Consider the problem 
    \begin{equation*}
    \begin{aligned}
    & \underset{X}{\text{minimize}}
    & & \sum_{x \in X} \sum_{j=1}^{\nu_X(x)} f(x,\bar x) \\
    & \text{subject to}
    & & X \subset M, \quad |X| = m  \\
    \end{aligned}
    \end{equation*}
    where $n > m$ and $\bar x$ is any real number. The optimal submultiset $X^*$ of $M$ must contain consecutive numbers, in the sense that $\forall s \in S$, if $s \in (\min(X^*),\max(X^*))$, $\nu_{X^{*}}(s) = \nu_M(s)$.
\end{lemma}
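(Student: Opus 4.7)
My plan is to argue by contradiction using a single-element swap. Suppose $X^*$ is optimal but fails the conclusion: there exists $s \in S$ with $\min(X^*) < s < \max(X^*)$ and $\nu_{X^*}(s) < \nu_M(s)$. The strict multiplicity gap means there is a spare copy of $s$ available in $M\setminus X^*$, so I can form a new feasible submultiset by removing one copy of some element of $X^*$ and adding a copy of $s$. I will show that removing either $x_{\min}:=\min(X^*)$ or $x_{\max}:=\max(X^*)$ (depending on where $\bar x$ sits) strictly decreases the objective, contradicting optimality.

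Before doing the swap, I first unpack the hypothesis on $f$: for any $y>x$ with $x$ and $y$ on the same closed side of $z$, the sign of $f(y,z)-f(x,z)$ equals the sign of $x+y-2z$. So $f(\cdot,z)$ is strictly increasing on $[z,\infty)$ and strictly decreasing on $(-\infty,z]$; in words, $f(\cdot,z)$ grows strictly as one moves away from $z$. This is the only property I use.

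I then split into two cases according to the position of $s$ relative to $\bar x$. Case A: $s \ge \bar x$. Since $x_{\max}>s\ge \bar x$, the points $s$ and $x_{\max}$ lie on the same side of $\bar x$, both satisfy $(s-\bar x)(x_{\max}-\bar x)\ge 0$, and $s+x_{\max}-2\bar x>0$, so the hypothesis (applied with $x=s$, $y=x_{\max}$, $z=\bar x$) yields $f(x_{\max},\bar x)>f(s,\bar x)$. Replacing one copy of $x_{\max}$ by one copy of $s$ strictly decreases the objective. Case B: $s \le \bar x$. Then $x_{\min}<s\le \bar x$, both lie on the same side of $\bar x$, $(x_{\min}-\bar x)(s-\bar x)\ge 0$, and $x_{\min}+s-2\bar x<0$, so applying the hypothesis with $x=x_{\min}$, $y=s$, $z=\bar x$ yields $f(s,\bar x)<f(x_{\min},\bar x)$. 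Swapping one copy of $x_{\min}$ for a copy of $s$ again strictly decreases the objective.

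The main obstacle is purely bookkeeping: one must pick the right roles for $x,y,z$ in each case so that both $(x-z)(y-z)\ge 0$ and $y>x$ hold simultaneously, and then read off the sign of $x+y-2z$ correctly. The boundary case $s=\bar x$ requires no special treatment, since the product $(x-z)(y-z)$ is allowed to vanish while the strict monotonicity conclusion still applies on the non-degenerate side. In either case the swap produces a feasible submultiset with strictly smaller objective, contradicting optimality of $X^*$ and establishing the consecutivity claim.
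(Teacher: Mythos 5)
Your proof is correct and follows essentially the same route as the paper's: a contradiction via a single-element swap, exchanging one copy of $\max(X^*)$ or $\min(X^*)$ for a spare copy of $s$ depending on which side of $\bar x$ the element $s$ lies. Your write-up is in fact slightly more careful than the paper's, since you explicitly verify the hypothesis on $f$ in each case and note that the strict multiplicity gap guarantees the swap stays feasible.
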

\begin{proof}
    Assume that $X^*$ is an optimal solution to the problem, and that $\exists s \in S$ such that $\min(X^*) < s < \max(X^*)$ and $\nu_{X^*}(s) < \nu_M(s)$. If $\bar x \leq s$, then $f(\max(X^*),\bar x) > f(s,\bar x)$, and if $\bar x > s$, $f(\min(X^*),\bar x) > f(s,\bar x)$. Either way, a submultiset of $M$ with a lower value of the objective is obtained by decreasing the occurrence of either $\max(X^*)$ or $\min(X^*)$ in $X^*$ by 1 while increasing the occurrence of $s$ by 1, contradicting the optimality of $X^*$.
\end{proof}

We can now solve the problem of finding the best limited-memory matrix approximation to a real symmetric matrix in the Frobenius norm,
\begin{theorem}\label{thm:lowest_dist_frobenius}
Let $A$ be an $n \times n$ real symmetric matrix with eigenvalues $\\ \lambda_1(A),...,\lambda_n(A)$ and eigenvectors $v_1(A),...,v_n(A)$. The optimal solutions to 
\begin{equation*}
\begin{aligned}
& \underset{X}{\text{minimize}}
& & \| X - A \|_F \\
& \text{subject to}
& & X \in {\mathcal L}_{m,n} \\
\end{aligned}
\end{equation*}
for which $v_k(X) = v_k(A), \ \forall k \in \{1,...,n\}$ are precisely those such that \\ $\lambda_i(X) = \lambda = \frac{\lambda_l(A)+\lambda_{l+1}(A)+...+\lambda_{l+n-m-1}(A)}{n-m}$ if $i \in \{l,...,l+n-m-1 \}$ and $\lambda_i(A)$ otherwise, and $l$ is an integer that minimizes $\sum_{i=l}^{l+n-m-1}(\lambda_{i}(A)-\lambda)^2$. 
\end{theorem}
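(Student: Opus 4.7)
The plan is to combine the earlier structural results with Lemma~\ref{lemma:consecutive_numbers} specialized to the squared-error function $f(x,z) = (x-z)^2$. First, since the Frobenius norm is unitarily invariant and, by Proposition~\ref{thm:multiplicity}, $X \in {\mathcal L}_{m,n}$ is an eigenvalue-multiplicity constraint, Theorem~\ref{thm:eigenv} licenses the restriction to $X$ whose eigenvectors coincide with those of $A$. In that reduced problem the objective collapses to
\[
\|X - A\|_F^2 = \sum_{k=1}^n (\lambda_k(X) - \lambda_k(A))^2,
\]
and the feasibility constraint becomes: some value $\lambda$ must appear among $\lambda_1(X), \dots, \lambda_n(X)$ with multiplicity at least $n-m$.

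Next I would reduce the task to a pure subset-selection problem. Pick an index set $I \subset \{1, \dots, n\}$ of size $n-m$ on which the common value $\lambda$ is attained; for $k \notin I$ the constraint is inactive, so the objective is minimized by setting $\lambda_k(X) = \lambda_k(A)$. For fixed $I$ the inner minimization over $\lambda$ is a one-dimensional least-squares problem solved by $\lambda = \bar\lambda_I := \tfrac{1}{n-m} \sum_{k \in I} \lambda_k(A)$. The remaining task is therefore to choose $I$, with $|I| = n-m$, so as to minimize the sample-variance-like quantity $\sum_{k \in I}(\lambda_k(A) - \bar\lambda_I)^2$ over the multiset of eigenvalues of $A$.

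The final step applies Lemma~\ref{lemma:consecutive_numbers} with $f(x,z) = (x-z)^2$. I would verify the hypothesis via the identity $f(y,z) - f(x,z) = (y-x)(x+y-2z)$, which gives $\bigl(f(y,z) - f(x,z)\bigr)(x+y-2z) = (y-x)(x+y-2z)^2$, a strictly positive quantity whenever $y > x$ and $(x-z)(y-z) \geq 0$ (the sign conditions combined with $y > x$ preclude $x+y = 2z$). The subtlety --- and what I expect to be the main obstacle --- is that the lemma is phrased for a \emph{fixed} reference value $\bar x$, whereas the objective uses the selection-dependent mean $\bar\lambda_I$. I would bridge this gap using the elementary identity $\sum_{k \in I'}(\lambda_k(A) - \bar\lambda_{I'})^2 \leq \sum_{k \in I'}(\lambda_k(A) - c)^2$, valid for every constant $c$: if $I^*$ is optimal with mean $\lambda^*$, applying the lemma with $\bar x = \lambda^*$ shows that any selection that skips an eigenvalue of $A$ strictly between its extremes admits a swap decreasing $\sum_{k}(\lambda_k(A) - \lambda^*)^2$, and therefore also the true variance objective. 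This forces the optimal $I$ to be a consecutive window $\{l, l+1, \dots, l+n-m-1\}$ after sorting the eigenvalues of $A$, and the stated expression for $\lambda$ together with the optimality criterion for $l$ follow immediately.
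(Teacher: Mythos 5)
Your proposal is correct and follows essentially the same route as the paper's proof: restrict to matrices sharing the eigenvectors of $A$, set the $m$ unconstrained eigenvalues equal to those of $A$, obtain the common value as the arithmetic mean of the selected eigenvalues via the first-order condition, and invoke Lemma~\ref{lemma:consecutive_numbers} to force the selected indices to form a consecutive window. Your explicit verification of the lemma's hypothesis for $f(x,z)=(x-z)^2$ and your bridge from the lemma's fixed reference $\bar x$ to the selection-dependent mean $\bar\lambda_I$ (via $\sum_{k\in I}(\lambda_k(A)-\bar\lambda_I)^2 \le \sum_{k\in I}(\lambda_k(A)-c)^2$) are details the paper leaves implicit, and you handle them correctly.
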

\begin{proof}
When $X$ and $A$ have the same eigenvectors, $\|X-A\|_F^2 = \sum_{i=1}^n (\lambda_i(X)-\lambda_i(A))^2$. To satisfy the constraint $X\in {\mathcal L}_{m,n}$, $n-m$ of the eigenvalues of $X$ must be set equal to each other. To minimize the Frobenius norm, the $m$ eigenvalues of $X$ that are not bound by the constraint should be equal to the corresponding eigenvalues of $A$. Let $i_1,...,i_{n-m}$ be the indices of those eigenvectors of $A$ and $X$ for which it is optimal to set the corresponding eigenvalues of $X$ equal to each other, and let all those eigenvalues be equal to $\lambda$. Then, the optimal value of the squared Frobenius norm is $\|X-A\|_F^2 = \sum_{k=1}^{n-m} (\lambda_{i_k}-\lambda)^2$. Since $\lambda$ is the value that minimizes this sum, the derivative of the sum with respect to $\lambda$ must be 0, so $\sum_{k=1}^{n-m}2(\lambda - \lambda_{i_k}(A)) = 0 \Leftrightarrow \lambda = \frac{\sum_{k=1}^{n-m}\lambda_{i_k}(A)}{n-m}$. The choice of optimal indices $i_1,...,i_{n-m}$ can now be posed as the type of problem considered in Lemma \ref{lemma:consecutive_numbers}. As the eigenvalues of $A$ are indexed in nondecreasing order, the optimal choice of $i_1,...,i_{n-m}$ must be $n-m$ consecutive integers $l,l+1,,...,l+n-m-1$ according to the lemma.
\end{proof}
The computational cost of finding the nearest limited memory matrix in Frobenius norm using the results above is equal to the cost of performing an eigendecomposition, plus the cost of finding the optimal sequence of eigenvalues to take the average of. For the latter task, an algorithm must for each sequence of $n-m$ consecutive eigenvalues check the sum of squared differences between the eigenvalues in that sequence and their average. Then, all eigenvalues in the sequence with the lowest sum shall be set to their average. The complexity for this is $O(n+m)$ if the algorithm only computes the mean and sum of squared mean differences by explicitly considering all eigenvalues for the first sequence and then computes the sum of squared mean differences and the mean for the next sequence recursively from those of the previous sequence and the two eigenvalues that are not shared by the two sequences.

Although the Frobenius norm and the induced $l^2$ norm are unitarily invariant, they are not invariant under more general transformations of matrices. While quasi-Newton methods such as BFGS and DFP can be derived as solutions to matrix optimization problems involving the Frobenius norm, those problems include weight matrices to make them invariant under linear changes of variables. This also makes the update formulas invariant; If $\tilde x_k = A x_k$, then $\tilde s_k = A s_k$, $\tilde y_k = A^{-T} y_k$, and $\tilde B_k = A^{-T} B_k A^{-1}$, and the same update formula that could be used to compute $B_k$ with $s_k$ and $y_k$ also holds for $\tilde B_k$ with $\tilde s_k$ and $\tilde y_k$. However, using the same weighted norms to derive limited memory Hessian approximations can not be done using Theorem~\ref{thm:Unitary matrix norms}, as the weighted norms themselves are not unitarily invariant. However, the DFP and BFGS methods can also be characterized by the matrix dissimilarity measures considered in Theorem~\ref{thm:eigenv_tr_logdet}. In what follows, we solve matrix optimization problems involving them. A drawback of these matrix dissimilarity measures is that they are only applicable to positive definite matrices. 

\begin{theorem}\label{thm:lowest_dist_tr_logdet1}
    Let $A$ be an $n \times n$ real positive definite matrix with eigenvalues $\lambda_1(A),...,\lambda_n(A)$ and eigenvectors $v_1(A),...,v_n(A)$. The optimal solutions to
    \begin{equation*}
    \begin{aligned}
    & \underset{X}{\text{minimize}}
    & & \tr(XA^{-1}) - \log \det (XA^{-1}) \\
    & \text{subject to}
    & & X \in {\mathcal L}_{m,n}, X \succ 0 \\
    \end{aligned}
    \end{equation*}
    for which $v_k(X) = v_k(A) \ \forall \ k \in \{1,...,n\}$ are precisely those such that \\ $\lambda_i(X) = \lambda = \left(\frac{\lambda_l(A)^{-1}+\lambda_{l+1}(A)^{-1}+...+\lambda_{l+n-m-1}(A)^{-1}}{n-m}\right)^{-1}$ if $i \in \{l,...,l+n-m-1 \}$ and $\lambda_i(A)$ otherwise, and $l$ is an integer that minimizes $\sum_{i=l}^{l+n-m-1}\frac{\lambda}{\lambda_i(A)} - \log (\frac{\lambda}{\lambda_i(A)})$.
\end{theorem}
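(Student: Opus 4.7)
The plan is to follow the template of Theorem~\ref{thm:lowest_dist_frobenius}, with the squared Frobenius objective replaced by the inverse-Stein-type objective $\tr(XA^{-1}) - \log\det(XA^{-1})$. First I would observe that this objective differs from the inverse Stein loss of Theorem~\ref{thm:eigenv_tr_logdet} only by the additive constant $n$, and that the constraint $X \in \mathcal{L}_{m,n}$ together with $X \succ 0$ is an eigenvalue constraint of the form required by that theorem. Hence there is an optimal $X$ whose eigenvectors coincide with those of $A$. Writing $\mu_i = \lambda_i(A)$ and $\lambda_i = \lambda_i(X)$, the matrix $XA^{-1}$ is then simultaneously diagonalized with eigenvalues $\lambda_i/\mu_i$, so the objective collapses to $\sum_{i=1}^{n}\bigl(\lambda_i/\mu_i - \log(\lambda_i/\mu_i)\bigr)$.

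Next I would invoke Proposition~\ref{thm:multiplicity} to reformulate the constraint as the requirement that some $n-m$ of the $\lambda_i$ share a common value $\lambda > 0$. Each summand $t \mapsto t/\mu_i - \log(t/\mu_i)$ is strictly convex with unique minimum at $t = \mu_i$, so the $m$ unconstrained eigenvalues of $X$ should be set equal to the corresponding $\mu_i$. Denoting by $I = \{i_1, \ldots, i_{n-m}\}$ the indices of the tied eigenvalues, I would then minimize $\sum_{k=1}^{n-m}\bigl(\lambda/\mu_{i_k} - \log(\lambda/\mu_{i_k})\bigr)$ over $\lambda > 0$; the first-order condition $\sum_{k}(1/\mu_{i_k} - 1/\lambda) = 0$ yields the harmonic mean $\lambda = \bigl((n-m)^{-1}\sum_{k}\mu_{i_k}^{-1}\bigr)^{-1}$ stated in the theorem.

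The remaining step, where I expect the main work to lie, is to show that $I$ consists of consecutive indices. I would apply Lemma~\ref{lemma:consecutive_numbers} with $f(x,z) = z/x - \log(z/x)$ and $\bar x = \lambda$, after verifying its hypothesis: computing $\partial_x f(x,z) = (x-z)/x^2$ shows that $f(\cdot, z)$ is strictly decreasing on $(0,z)$ and strictly increasing on $(z,\infty)$, so under $(x-z)(y-z) \geq 0$ with $y > x$ the sign of $f(y,z) - f(x,z)$ matches the common sign of $x - z$ and $y - z$, which is also the sign of $x + y - 2z = (x - z) + (y - z)$, yielding a strictly positive product as required (the boundary cases $x = z$ or $y = z$ follow from the strict minimum of $f(\cdot, z)$ at $z$). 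A small subtlety is that $\lambda$ itself depends on $I$ whereas the lemma fixes $\bar x$; I would resolve this by the standard decoupling observation that at any joint optimum $(I^{*}, \lambda^{*})$, the set $I^{*}$ must also minimize $\sum_{i \in I} f(\mu_i, \lambda^{*})$ at fixed $\lambda = \lambda^{*}$ (otherwise re-optimizing $\lambda$ over a strictly better $I$ would beat the joint optimum), so the lemma forces $I^{*}$ to consist of $n - m$ consecutive indices $l, l+1, \ldots, l + n - m - 1$, as asserted.
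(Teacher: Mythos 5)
Your proposal is correct and follows essentially the same route as the paper's proof: reduce to common eigenvectors via Theorem~\ref{thm:eigenv_tr_logdet}, fix the $m$ free eigenvalues to those of $A$ by strict convexity of $t/\mu - \log(t/\mu)$, obtain the harmonic mean from the first-order condition, and invoke Lemma~\ref{lemma:consecutive_numbers} for consecutiveness. The only difference is that you explicitly verify the lemma's hypothesis for $f(x,z) = z/x - \log(z/x)$ and spell out the decoupling of $I$ from $\lambda$ at the joint optimum, both of which the paper leaves implicit.
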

\begin{proof}
When $X$ and $A$ have the same eigenvectors,
\begin{equation*}
    \tr(XA^{-1}) - \log \det (XA^{-1}) = \sum_{i=1}^n \frac{\lambda_i(X)}{\lambda_i(A)} - \log \left(\frac{\lambda_i(X)}{\lambda_i(A)} \right).
\end{equation*}

As in the proof of Theorem~\ref{thm:lowest_dist_frobenius}, $n-m$ eigenvalues of $X$ must be set to one and the same value, and it is optimal to set the other eigenvalues equal to the corresponding eigenvalues of $A$, as the function $x - \log(x)$ has a unique global minimum in its domain for $x = 1$. Let $i_1,...,i_{n-m}$ be the indices of those eigenvectors of $A$ and $X$ for which it is optimal to set the corresponding eigenvalues of $X$ equal to each other, and let all those eigenvalues be equal to $\lambda$. Then, the part of $\tr(XA^{-1}) - \log\det(XA^{-1})$ that we need to optimize with respect to $\lambda$ is $\sum_{k=1}^{n-m} \frac{\lambda}{\lambda_{i_k}(A)} - \log \left(\frac{\lambda}{\lambda_{i_k}(A)} \right)$. Since $\lambda$ is the value that minimizes this sum, the derivative of the sum with respect to $\lambda$ must be 0, so $\sum_{k=1}^{n-m} \frac{1}{\lambda_{i_k}(A)} - \frac{1}{\lambda} = 0 \Leftrightarrow \lambda = \left(\frac{\lambda_{i_1}(A)^{-1}+\lambda_{i_2}(A)^{-1}+...+\lambda_{i_{n-m}}(A)^{-1}}{n-m}\right)^{-1}$. The choice of optimal indices $i_1,...,i_{n-m}$ can now be posed as the type of problem considered in Lemma \ref{lemma:consecutive_numbers}. As the eigenvalues of $A$ are indexed in non-decreasing order, the optimal choice of $i_1,...,i_{n-m}$ must be $n-m$ consecutive integers $l,l+1,,...,l+n-m-1$ according to the lemma.
\end{proof}

\begin{theorem}\label{thm:lowest_dist_tr_logdet2}
    Let $A$ be an $n \times n$ real positive definite matrix with eigenvalues $\lambda_1(A),...,\lambda_n(A)$ and eigenvectors $v_1(A),...,v_n(A)$. The optimal solutions to
    \begin{equation*}
    \begin{aligned}
    & \underset{X}{\text{minimize}}
    & & \tr(X^{-1}A) - \log \det (X^{-1}A) \\
    & \text{subject to}
    & & X \in {\mathcal L}_{m,n}, X \succ 0 \\
    \end{aligned}
    \end{equation*}
    for which $v_k(X) = v_k(A) \ \forall \ k \in \{1,...,n\}$ are precisely those such that \\ $\lambda_i(X) = \lambda = \frac{\lambda_l(A)+\lambda_{l+1}(A)+...+\lambda_{l+n-m-1}(A)}{n-m}$ if $i \in \{l,...,l+n-m-1 \}$ and $\lambda_i(A)$ otherwise, and $l$ is an integer that minimizes $\sum_{i=l}^{l+n-m-1}\frac{\lambda_i(A)}{\lambda} - \log (\frac{\lambda_i(A)}{\lambda})$.
\end{theorem}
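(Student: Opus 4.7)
The plan is to mirror the proof of Theorem~\ref{thm:lowest_dist_tr_logdet1} almost step for step, with the main substantive change being that the inversion of $X$ in the objective swaps the role of $\lambda_i(X)$ and $\lambda_i(A)$, turning the harmonic mean of the previous theorem into an arithmetic mean. First, I would invoke Theorem~\ref{thm:eigenv_tr_logdet}, noting that the objective differs from the Stein loss $\tr(X^{-1}A) - \log\det(X^{-1}A) - n$ only by a constant, so every optimal $X$ may be taken to share eigenvectors with $A$. This reduces the problem to choosing the eigenvalues $\lambda_i(X)$ subject to the multiplicity constraint that Proposition~\ref{thm:multiplicity} identifies with $X \in \mathcal{L}_{m,n}$.

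Next, I would exploit the shared eigenvectors to rewrite the objective as
\begin{equation*}
\tr(X^{-1}A) - \log\det(X^{-1}A) = \sum_{i=1}^n g\!\left(\frac{\lambda_i(A)}{\lambda_i(X)}\right), \qquad g(t) = t - \log t,
\end{equation*}
and observe that $g$ attains its unique minimum at $t=1$. Hence the $m$ eigenvalues of $X$ that are not forced to be equal should be matched to the corresponding $\lambda_i(A)$, and the remaining $n-m$ tied eigenvalues (all equal to some $\lambda > 0$) should minimize $\sum_{k=1}^{n-m}\bigl[\lambda_{i_k}(A)/\lambda - \log(\lambda_{i_k}(A)/\lambda)\bigr]$ over the choice of index subset $\{i_1,\ldots,i_{n-m}\}$. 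Setting the derivative with respect to $\lambda$ to zero yields $\lambda = \frac{1}{n-m}\sum_{k} \lambda_{i_k}(A)$, the arithmetic mean claimed by the theorem — contrast with the harmonic mean in Theorem~\ref{thm:lowest_dist_tr_logdet1}, where the differentiation was with respect to a $\lambda$ appearing in the numerator.

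Finally, I would argue that the optimal indices form a consecutive block by invoking Lemma~\ref{lemma:consecutive_numbers} with $f(x,z) = x/z - \log(x/z)$, in exactly the same manner as the proof of Theorem~\ref{thm:lowest_dist_tr_logdet1}. The step that needs a moment of checking is the hypothesis of the lemma: since $\partial f/\partial x = (x-z)/(xz)$, the function $f(\cdot,z)$ is strictly decreasing on $(0,z)$ and strictly increasing on $(z,\infty)$, so for any $y > x$ with $(x-z)(y-z) \geq 0$ the quantities $f(y,z)-f(x,z)$ and $x+y-2z$ have the same strict sign, giving a strictly positive product. With the hypothesis verified, the lemma forces the optimal $\{i_1,\ldots,i_{n-m}\}$ to be $n-m$ consecutive integers $l,l+1,\ldots,l+n-m-1$, as the eigenvalues of $A$ are enumerated in nondecreasing order. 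I do not anticipate a real obstacle beyond this routine verification; the entire argument is a symmetric counterpart to the previous theorem, and the only conceptual difference — arithmetic versus harmonic mean — is a transparent consequence of whether $\lambda$ appears in the numerator or denominator of the Stein-loss summand.
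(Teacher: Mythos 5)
Your proposal is correct and follows essentially the same route as the paper's proof: reduce to the eigenvalues via the shared-eigenvector result, match the $m$ free eigenvalues using the minimum of $t-\log t$ at $t=1$, obtain the arithmetic mean by stationarity in $\lambda$, and conclude consecutiveness via Lemma~\ref{lemma:consecutive_numbers}. Your explicit verification of the lemma's hypothesis for $f(x,z)=x/z-\log(x/z)$ is a small addition the paper leaves implicit, but it does not constitute a different approach.
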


\begin{proof}
When $X$ and $A$ have the same eigenvectors,
\begin{equation*}
    \tr(X^{-1}A) - \log \det (X^{-1}A) = \sum_{i=1}^n \frac{\lambda_i(A)}{\lambda_i(X)} - \log \left(\frac{\lambda_i(A)}{\lambda_i(X)} \right).
\end{equation*}

As in the proof of Theorem~\ref{thm:lowest_dist_frobenius}, $n-m$ eigenvalues of $X$ must be set to one and the same value, and it is optimal to set the other eigenvalues equal to the corresponding eigenvalues of $A$, as the function $x - \log(x)$ has a unique global minimum in its domain for $x = 1$. Let $i_1,...,i_{n-m}$ be the indices of those eigenvectors of $A$ and $X$ for which it is optimal to set the corresponding eigenvalues of $X$ equal to each other, and let all those eigenvalues be equal to $\lambda$. Then, the part of $\tr(X^{-1}A) - \log\det(X^{-1}A)$ that we need to optimize with respect to $\lambda$ is $\sum_{k=1}^{n-m} \frac{\lambda_{i_k}(A)}{\lambda} - \log \left(\frac{\lambda_{i_k}(A) }{\lambda} \right)$. Since $\lambda$ is the value that minimizes this sum, the derivative of the sum with respect to $\lambda$ must be 0, so $ \sum_{k=1}^{n-m} - \frac{\lambda_{i_k}(A)}{\lambda^2} +\frac{1}{\lambda}= 0 \Leftrightarrow \lambda = \frac{\lambda_{i_1}(A)+\lambda_{i_2}(A)+...+\lambda_{i_{n-m}}(A)}{n-m}$. The choice of optimal indices $i_1,...,i_{n-m}$ can now be posed as the type of problem considered in Lemma \ref{lemma:consecutive_numbers}. As the eigenvalues of $A$ are indexed in non-decreasing order, the optimal choice of $i_1,...,i_{n-m}$ must be $n-m$ consecutive integers $l,l+1,,...,l+n-m-1$ according to the lemma.
\end{proof}

\begin{theorem}\label{thm:lowest_dist_tr_tr}
    Let $A$ be an $n \times n$ real positive definite matrix with eigenvalues $\lambda_1(A),...,\lambda_n(A)$ and eigenvectors $v_1(A),...,v_n(A)$. The optimal solutions to
    \begin{equation*}
    \begin{aligned}
    & \underset{X}{\text{minimize}}
    & & \tr(XA^{-1}) + \tr(X^{-1}A) \\
    & \text{subject to}
    & & X \in {\mathcal L}_{m,n}, X \succ 0 \\
    \end{aligned}
    \end{equation*}
    for which $v_k(X) = v_k(A) \ \forall \ k \in \{1,...,n\}$ are precisely those such that \\ $\lambda_i(X) = \lambda = \left(\frac{\lambda_l(A)+\lambda_{l+1}(A)+...+\lambda_{l+n-m-1}(A)}{\lambda_l(A)^{-1}+\lambda_{l+1}(A)^{-1}+...+\lambda_{l+n-m-1}(A)^{-1}}\right)^{(1/2)}$ if $i \in \{l,...,l+n-m-1 \}$ and $\lambda_i(A)$ otherwise, and $l$ is an integer that minimizes $\sum_{i=l}^{l+n-m-1}\frac{\lambda_i(A)}{\lambda} + \frac{\lambda}{\lambda_i(A)}$.
\end{theorem}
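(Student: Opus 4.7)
The plan is to follow the same template as Theorems~\ref{thm:lowest_dist_tr_logdet1} and~\ref{thm:lowest_dist_tr_logdet2}, adjusting only the concrete calculations to match the symmetrized Stein loss. First I would invoke Theorem~\ref{thm:eigenv_tr_logdet} (applied to the symmetrized Stein loss) to restrict attention to matrices $X$ sharing eigenvectors with $A$, and then use Proposition~\ref{thm:multiplicity} to translate the constraint $X \in \mathcal{L}_{m,n}$ into the requirement that at least $n-m$ of the $\lambda_i(X)$ coincide. Under these reductions the objective diagonalizes:
\begin{equation*}
\tr(XA^{-1}) + \tr(X^{-1}A) = \sum_{i=1}^{n} \left( \frac{\lambda_i(X)}{\lambda_i(A)} + \frac{\lambda_i(A)}{\lambda_i(X)} \right).
\end{equation*}

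Next I would separate the $m$ \emph{free} eigenvalues from the $n-m$ \emph{tied} ones. For any $a > 0$, the scalar $g(x) = x/a + a/x$ attains its unique positive minimum at $x = a$, so each free eigenvalue should be set to the corresponding $\lambda_i(A)$. Denoting the tied indices by $i_1, \dots, i_{n-m}$ and their common value by $\lambda$, the remaining problem is to minimize $\sum_{k=1}^{n-m}\!\left( \lambda_{i_k}(A)/\lambda + \lambda/\lambda_{i_k}(A) \right)$ over both the index set and $\lambda > 0$. Differentiating in $\lambda$ and setting to zero yields
\begin{equation*}
\sum_{k=1}^{n-m}\!\left( -\frac{\lambda_{i_k}(A)}{\lambda^2} + \frac{1}{\lambda_{i_k}(A)} \right) = 0 \;\;\Longleftrightarrow\;\; \lambda = \left( \frac{\sum_{k=1}^{n-m} \lambda_{i_k}(A)}{\sum_{k=1}^{n-m} \lambda_{i_k}(A)^{-1}} \right)^{1/2},
\end{equation*}
which matches the formula in the statement.

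The remaining task is to show the optimal indices form a consecutive block $\{l, \dots, l+n-m-1\}$. I would invoke Lemma~\ref{lemma:consecutive_numbers} with $f(x,z) = x/z + z/x$, with $z$ set to the optimal $\lambda$ for a purportedly optimal index set. The hypothesis to check is that whenever $(x-z)(y-z) \geq 0$ and $y > x$ one has $(f(y,z)-f(x,z))(x+y-2z) > 0$. This follows because $\partial_x f(x,z) = 1/z - z/x^2$ has the same sign as $x - z$ on $(0,\infty)$, so $f(\cdot,z)$ is strictly decreasing on $(0,z]$ and strictly increasing on $[z,\infty)$; both factors then match sign in each case.

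The main obstacle, as in the preceding two theorems, is the coupling between the index choice and the optimal $\lambda$: Lemma~\ref{lemma:consecutive_numbers} requires a fixed reference point $\bar x$, whereas the optimal $\lambda$ depends on the chosen indices. This is resolved by a standard contradiction argument: if an optimal index set $X^*$ with optimal scalar $\lambda^*$ were not consecutive, the lemma applied with $\bar x = \lambda^*$ would exhibit an index set $X'$ strictly decreasing $\sum f(\lambda_{i_k}(A), \lambda^*)$, and re-optimizing $\lambda$ on $X'$ can only decrease the objective further, contradicting optimality of $X^*$.
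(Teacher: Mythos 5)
Your proposal is correct and follows essentially the same route as the paper's proof: diagonalize the objective via Theorem~\ref{thm:eigenv_tr_logdet}, fix the free eigenvalues using the minimum of $x + 1/x$ at $x=1$, derive $\lambda$ from the stationarity condition, and conclude consecutiveness via Lemma~\ref{lemma:consecutive_numbers}. You are in fact somewhat more careful than the paper in two places it leaves implicit --- explicitly verifying the sign hypothesis of Lemma~\ref{lemma:consecutive_numbers} for $f(x,z)=x/z+z/x$, and handling the coupling between the index set and the optimal $\lambda$ by the fix-$\bar x=\lambda^*$-then-reoptimize contradiction --- but these are refinements of the same argument, not a different one.
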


\begin{proof}
When $X$ and $A$ have the same eigenvectors,
\begin{equation*}
    \tr(XA^{-1}) + \tr(X^{-1}A) = \sum_{i=1}^n \frac{\lambda_i(X)}{\lambda_i(A)} + \frac{\lambda_i(A)}{\lambda_i(X)}.
\end{equation*}

As in the proof of Theorem~\ref{thm:lowest_dist_frobenius}, $n-m$ eigenvalues of $X$ must be set to one and the same value, and it is optimal to set the other eigenvalues equal to the corresponding eigenvalues of $A$, as the function $x + \frac{1}{x}$ has a unique global minimum in its domain for $x = 1$. Let $i_1,...,i_{n-m}$ be the indices of those eigenvectors of $A$ and $X$ for which it is optimal to set the corresponding eigenvalues of $X$ equal to each other, and let all those eigenvalues be equal to $\lambda$. Then, the part of $\tr(XA^{-1}) + \tr(X^{-1}A)$ that we need to optimize with respect to $\lambda$ is $\sum_{k=1}^{n-m} \frac{\lambda}{\lambda_{i_k}(A)} + \frac{\lambda_{i_k}(A)}{\lambda} $. Since $\lambda$ is the value that minimizes this sum, the derivative of the sum with respect to $\lambda$ must be 0, so $\sum_{k=1}^{n-m} \frac{1}{\lambda_{i_k}(A)} - \frac{\lambda_{i_k}(A)}{\lambda^2} = 0 \Leftrightarrow \lambda = \left(\frac{\lambda_{i_1}(A)+\lambda_{i_2}(A)+...+\lambda_{i_{n-m}}(A)}{\lambda_{i_1}(A)^{-1}+\lambda_{i_2}(A)^{-1}+...+\lambda_{i_{n-m}}(A)^{-1}}\right)^{(1/2)}$. The choice of optimal indices $i_1,...,i_{n-m}$ can now be posed as the type of problem considered in Lemma \ref{lemma:consecutive_numbers}. As the eigenvalues of $A$ are indexed in non-decreasing order, the optimal choice of $i_1,...,i_{n-m}$ must be $n-m$ consecutive integers $l,l+1,,...,l+n-m-1$ according to the lemma.
\end{proof}

As we see, solving the matrix optimization problems in theorems~\ref{thm:lowest_dist_tr_logdet1}-~\ref{thm:lowest_dist_tr_tr} amounts to finding a consecutive sequence of the eigenvalues of the matrix $A$ to be approximated, computing some form of mean of these eigenvalues and setting the eigenvalues in the sequence to that mean, while leaving all other properties of $A$ unchanged. Both the means and the matrix dissimilarity measures expressed as a function of the eigenvalues can be computed recursively when one eigenvalue is added and one removed from a sequence. Thus, finding the optimal sequence of eigenvalues to modify can be done with computational complexity $O(n+m)$. 

Finding the eigenvalue decomposition of an $n \times n$ real symmetric has computational complexity $O(n^3)$. This dominates the other computational costs associated with solving the limited memory approximation problems that we have considered in this subsection. To reduce the computational cost further, the matrices to be approximated must have such a structure that their eigendecomposition is available at a reduced cost. This will be the case for the matrix approximation problems considered in the next subsection.

\subsection{Reducing limited memory matrices}\label{subsec:reduction}

Apart from taking up limited memory, the matrices in $\mathcal{L}_{m,n}$ are much less costly to perform computations with than general $n\times n$ matrices. In particular, computations involving their eigenvalues and eigenvectors can be done much more efficiently, something that was used to great advantage by e.g. Burdakov et. al. to derive trust-region methods \cite{Combining_TR_and_LQN}. When $A \in \mathcal{L}_{m,n}$, its eigenvalue decomposition can be obtained at a computational cost that scales as $O(nm^2 + m^3)$. The linear scaling in $n$ is important, as it means that very large matrix dimensions can be handled as long as $m$ is not too large. To see how the eigenvalue decomposition of $A$ can be obtained, note that a general matrix in $\mathcal{L}_{m,n}$ can be written as 

\begin{equation*}
    A = \alpha I + U C U^T,
\end{equation*}
where $U \in \mathbb{R}^{n \times m}$ and $C \in \mathbb{R}^{m \times m}$. By performing a QR-factorization of $U$, it can be written as $U = QR$, where $Q \in \mathbb{R}^{n,m}$ is unitary and $R\in \mathbb{R}^{m\times m}$ is upper triangular. The matrix $M:= RCR^T$, it is an $m \times m$ real symmetric matrix, hence it has an eigenvalue decomposition $M = V \Lambda V^T$ with the columns of $V$ being orthonormal eigenvectors, that can be obtained for a given precision with complexity $O(m^3)$. Putting this together, we have that
\begin{equation}\label{eqn:matrix_structure}
    A = \alpha I + U C U^T = \alpha I + QRCR^TQ^T = QV \Lambda V^TQ^T.
\end{equation}
From this, we see that every column of $QV$ is an eigenvector of $A$ corresponding to an eigenvalue equal to the sum of $\alpha$ and one of the diagonal elements of $\Lambda$. In addition, every vector that is orthogonal to all columns of $QV$ is an eigenvector of $A$ corresponding to the eigenvalue $\alpha$.
The latter kind of eigenvector is only implicitly defined. If needed for explicit calculations, one could randomly generate a vector and use e.g. Gram-Schmidt orthogonalization to make it orthogonal to the explicitly known eigenvectors. However, when $n$ is large, such computations would be costly, and desirable to avoid if possible. 

A specific application of the matrix optimization problems considered in the previous subsection is to derive a type of limited-memory quasi-Newton method, which alternates quasi-Newton updates with memory reductions. For this, it is relevant to note that performing a quasi-Newton update on a given matrix typically implies adding a low-rank matrix to it. Assuming that the low-rank matrix is of rank $l$, and the matrix to be updated is in $\mathcal{L}_{m,n}$, the updated matrix will be in $\mathcal{L}_{m+l,n}$. An example of this will be shown in Section~\ref{sec:NovelQN}. Then, to keep the memory requirements limited, one can replace the updated matrix with its best approximation in $\mathcal{L}_{m,n}$, based on minimizing some matrix dissimilarity measure. In the remainder of the article, we will call this "reducing" the matrix, and the resulting matrix the "reduced" matrix. 

When reducing a matrix, one would want to avoid the previously mentioned issue of having to explicitly generate any of the eigenvectors that were only implicitly defined before the reduction. Fortunately, for three of the matrix dissimilarity measures that we have considered, it is sufficient to work with only the explicitly stored eigenvectors, provided that $n$ is sufficiently large compared to $m$ and $l$. Another way to express this is that if $A \in \mathcal{L}_{m+l,n}$ is the matrix to be reduced, and it has the structure described in Equation~\eqref{eqn:matrix_structure},
all the eigenvalues corresponding to eigenvectors in the implicitly defined eigenspace will be part of the consecutive sequence of eigenvalues averaged to produce the eigenvalue of multiplicity at least $n-m$ of the reduced matrix. We demonstrate this property in the proofs of the three theorems below.

\begin{theorem}\label{thm:l2 n-m-l}
    Let $n > 2m + l$, $A \in {\mathcal L}_{m+l,n}$ and $\alpha$ be the eigenvalue of $A$ that has multiplicity at least $n-m-l$. The nearest limited memory matrix problem
    \begin{equation*}
    \begin{aligned}
    & \underset{X}{\text{minimize}}
    & & \| X - A \|_2 \\
    & \text{subject to}
    & & X \in {\mathcal L}_{m,n} \\
    \end{aligned}
    \end{equation*}
    has an optimal solution $X^{\star}$ such that its eigenvalue of multiplicity at least $n-m$ is the arithmetic mean of the maximum and minimum eigenvalue in a sequence of $n-m-l$ consecutive eigenvalues of $A$ equal to $\alpha$ and no more than $l$ eigenvalues of $A$ distinct from $\alpha$.  
\end{theorem}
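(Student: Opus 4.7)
The plan is to reduce to Theorem~\ref{thm:lowest_dist_l2} and then argue via a shift that the optimizing window of $n-m$ consecutive sorted eigenvalues of $A$ can be taken to contain the entire block of $\alpha$'s. Applying Theorem~\ref{thm:lowest_dist_l2} gives an optimal $X^{\star}$ with $v_k(X^{\star}) = v_k(A)$ whose eigenvalues agree with those of $A$ outside a window $[s,\,s+n-m-1]$ of consecutive sorted indices, on which they are all set to the midpoint $\tfrac{1}{2}(\lambda_s(A)+\lambda_{s+n-m-1}(A))$, the window being chosen to minimize the range $\lambda_{s+n-m-1}(A) - \lambda_s(A)$.

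Next, I would locate the $\alpha$-block. Since $\alpha$ has multiplicity at least $n-m-l$, the sorted eigenvalues contain a maximal consecutive block $[a,b]$ with $\lambda_i(A) = \alpha$ for $i \in [a,b]$ and $b-a+1 \geq n-m-l$, so at most $m+l$ sorted positions lie outside the block. The hypothesis $n > 2m+l$ gives $n-m > m+l$, which forbids any window of length $n-m$ from fitting entirely on one side of $[a,b]$; every admissible window must overlap the block.

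The core step, and the main obstacle, is the shift argument. Suppose an optimal window satisfies $s^{\star} > a$, so it misses some $\alpha$'s at the bottom. The overlap property forces $s^{\star} \leq b$, hence $\lambda_{s^{\star}}(A) = \alpha$ and, since the window necessarily reaches past $b$, $\lambda_{s^{\star}+n-m-1}(A) > \alpha$. The shift $s^{\star} \mapsto s^{\star}-1$ preserves the minimum at $\alpha$ (as $s^{\star}-1 \geq a$) and weakly lowers the maximum, so the range does not increase. Iterating drives $s^{\star}$ down to $a$; a symmetric argument, for a window missing $\alpha$'s at the top (so $s^{\star} < a-l$), shifts $s^{\star}$ up to $a-l$. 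Either way, one arrives at an optimal window whose starting index lies in $[\max(1,a-l),\,\min(a,m+1)]$, and this window contains every $\alpha$ in the block together with at most $l$ other eigenvalues of $A$.

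Substituting back, the eigenvalue of multiplicity at least $n-m$ of $X^{\star}$ equals $\tfrac{1}{2}(\lambda_s(A)+\lambda_{s+n-m-1}(A))$ for such an $s$, which is exactly the arithmetic mean of the maximum and minimum of a sequence of $n-m-l$ consecutive $\alpha$-eigenvalues of $A$ and no more than $l$ other eigenvalues, as claimed. The delicate points to handle carefully are the bookkeeping of which endpoint equals $\alpha$ at each shift, and the use of $n > 2m+l$ to ensure the window cannot escape the $\alpha$-block on either side.
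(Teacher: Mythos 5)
Your proposal is correct and follows essentially the same route as the paper: reduce to Theorem~\ref{thm:lowest_dist_l2}, use $n-m>m+l$ to force the optimal window of $n-m$ consecutive sorted eigenvalues to meet the $\alpha$-block, and then show the window can be moved to absorb the block without increasing its range (the paper phrases your one-step shift as replacing the highest or lowest eigenvalue of the sequence by an omitted eigenvalue equal to $\alpha$). The only points to tidy are the degenerate case where the multiplicity of $\alpha$ exceeds $n-m$, in which the window need not ``reach past $b$'' but the optimal range is then $0$ and the claim is immediate, and the index bookkeeping in the symmetric case (the condition should be that the window ends before $b$, not $s^{\star}<a-l$); neither affects the substance of the argument.
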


\begin{proof}
    By Theorem \ref{thm:lowest_dist_l2}, the problem has an optimal solution where the eigenvalue of multiplicity $n-m$ is the mean of the maximum and minimum eigenvalue in a sequence of $n-m$ consecutive eigenvalues of $A$. An optimal solution can be found by setting $n-m$ such sequential eigenvalues of $A$ to that average while leaving the other eigenvalues and the eigenvectors unchanged. Assume that modifying a particular sequence of eigenvalues in this way results in an optimal solution. When $n - m > m + l$, that sequence must contain at least one eigenvalue equal to $\alpha$. If $\alpha$ is not the highest or lowest eigenvalue in the sequence, all eigenvalues equal to $\alpha$ must be contained in the sequence and the theorem holds. Otherwise, either $n-m-l$ eigenvalues equal to $\alpha$ are contained in the sequence, or it is possible to get a sequence with an absolute difference between the highest and lowest eigenvalue that is equal to or lower than that of the current sequence by replacing either the highest or the lowest eigenvalue in the sequence with one of the eigenvalues equal to $\alpha$ that are not in the sequence, contradicting the optimality of the sequence.
\end{proof}

\begin{theorem}\label{thm:tr2 n-m-l}
    Let $n \geq 3m + 2l$, $A \in {\mathcal L}_{m+l,n}$ and $\alpha$ be the eigenvalue of $A$ that has multiplicity at least $n-m-l$. The nearest limited memory matrix problem
    \begin{equation*}
    \begin{aligned}
    & \underset{X}{\text{minimize}}
    & & \tr(XA^{-1}) + \tr(X^{-1}A) \\
    & \text{subject to}
    & & X \in {\mathcal L}_{m,n} \\
    \end{aligned}
    \end{equation*}
     has an optimal solution $X^{\star}$ such that its eigenvalue of multiplicity at least $n-m$ is the geometric mean of the arithmetic and harmonic means of all eigenvalues in a sequence of $n-m-l$ consecutive eigenvalues of $A$ equal to $\alpha$ and no more than $l$ eigenvalues of $A$ distinct from $\alpha$.
\end{theorem}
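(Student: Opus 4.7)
The plan is to reduce to Theorem~\ref{thm:lowest_dist_tr_tr}, which already guarantees an optimal solution with the same eigenvectors as $A$ and with its repeated eigenvalue equal to $\bigl(\tfrac{P}{Q}\bigr)^{1/2}$, where $P$ and $Q$ are the sum and the sum of reciprocals of some consecutive block of $n-m$ eigenvalues of $A$. Substituting this $\lambda^\star$ back into the objective gives a minimum value of $2\sqrt{PQ}$, so selecting an optimal block amounts to minimizing the product $PQ$ over all consecutive length-$(n-m)$ subsequences of the sorted eigenvalues of $A$. What remains is to show that this optimal block contains at least $n-m-l$ copies of $\alpha$; the explicit formula in the statement will then follow directly.

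I would argue by contradiction. Suppose an optimal block $S$ contains $k > l$ eigenvalues distinct from $\alpha$. Because $\alpha$ has multiplicity at least $n-m-l$ in $A$ and $S$ contains only $n-m-k < n-m-l$ copies of $\alpha$, at least $k-l$ copies of $\alpha$ lie outside $S$. Since all copies of $\alpha$ occupy consecutive positions in the sorted order, these external $\alpha$'s are adjacent to $S$ on exactly one side (if the $\alpha$-block were entirely contained in $S$, the count $k = n-m-q$ would already be $\le l$). Define the shifted block $S'$ by deleting the extreme eigenvalue $\beta$ of $S$ on the side opposite to the external $\alpha$'s and appending one $\alpha$ on the other side.

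The main technical obstacle is to verify that this shift does not increase $PQ$. A direct expansion with $P' = P + \alpha - \beta$ and $Q' = Q + 1/\alpha - 1/\beta$ gives
\begin{equation*}
P'Q' - PQ = (\alpha - \beta)\bigl[Q - P'/(\alpha\beta)\bigr].
\end{equation*}
In either case $\beta < \alpha$ or $\beta > \alpha$, the monotonicity of the sorted eigenvalues within $S$ lets me bound $\alpha\beta Q$ in terms of $P'$ by replacing the ratios $\beta/\beta_i$ (or $\gamma_i/\gamma_k$) by~$1$; the resulting inequality collapses to $(2k - (n-m) - 1)(\alpha - \beta)\le 0$. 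Since $k \le m+l$ and $n \ge 3m + 2l$ forces $2(m+l) - (n-m) - 1 \le -1$, we obtain $P'Q' \le PQ$, contradicting optimality of $S$.

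Iterating the shift reduces $k$ without increasing the objective, so any optimal block must satisfy $k \le l$. The only boundary scenarios where the shift cannot be performed are those in which $S$ has reached the leftmost or rightmost feasible position; I would check that in each such case the multiplicity bound $q \ge n-m-l$ already forces $k \le l$ directly, exactly as in the proof of Theorem~\ref{thm:l2 n-m-l}. Once $k \le l$ is established, $S$ consists of $n-m-l$ consecutive copies of $\alpha$ plus at most $l$ neighbouring non-$\alpha$ eigenvalues, and the explicit expression for $\lambda^\star$ is precisely the geometric mean of the arithmetic and harmonic means of the entries of $S$ supplied by Theorem~\ref{thm:lowest_dist_tr_tr}.
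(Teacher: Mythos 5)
Your proposal is correct and follows essentially the same route as the paper: reduce via Theorem~\ref{thm:lowest_dist_tr_tr} to choosing a consecutive window (equivalently, minimizing the product $PQ$), then show that any optimal window with more than $l$ non-$\alpha$ entries could be shifted one step toward the external $\alpha$'s without increasing the objective, using the same term-by-term bound (replace each interior $\gamma_i$ by the extreme value $\beta$) that is exactly where $n\ge 3m+2l$ enters. The only blemish is a sign slip in your ``collapsed'' inequality: the correct reduction is $\bigl((n-m)+1-2k\bigr)(\alpha-\beta)^2\ge 0$, i.e.\ the sufficient condition is simply $2k\le n-m+1$ regardless of whether $\beta$ lies above or below $\alpha$, and your final numerical check $2(m+l)-(n-m)-1\le -1$ is precisely the verification of this condition, so the argument goes through.
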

\begin{proof}
    By Theorem \ref{thm:lowest_dist_tr_tr}, the problem has an optimal solution where the eigenvalue of multiplicity $n-m$ is the geometric mean of the arithmetic and harmonic means of all eigenvalues in a sequence of $n-m$ consecutive eigenvalues of $A$. An optimal solution can be found by setting $n-m$ such sequential eigenvalues of $A$ to that value while leaving the other eigenvalues and the eigenvectors unchanged. Assume that modifying the sequence of eigenvalues starting with index $k$ in this way results in an optimal solution. The value of the objective function is then 
    \begin{equation*}
        2m + 2 \sqrt{\sum_{i=k}^{k+n-m-1} \lambda_i \sum_{i=k}^{k+n-m-1} \frac{1}{\lambda_j}}.
    \end{equation*}
     If $\alpha$ is not the highest or lowest eigenvalue in the sequence, all eigenvalues equal to $\alpha$ must be contained in the sequence and the theorem holds. It also holds when both the highest and lowest eigenvalues are equal to $\alpha$. We thus only need to consider the case where all eigenvalues in the sequence not equal to $\alpha$ are either all higher or all lower than $\alpha$. The argument for both cases is analogous, differing only by the values of certain indices and the direction of certain inequalities, so without loss of generality, we can assume that they are all higher. Let the highest eigenvalue contained in the sequence be equal to $\lambda > \alpha$. If $k=1$, all eigenvalues equal to $\alpha$ are contained in the sequence and the theorem holds, so to derive a contradiction, we assume that $k > 1$ and that $\lambda_{k-1}(A)=\alpha$. Since the sequence we average over is optimal, the difference

    \begin{equation}\label{eqn:objdiff}
    \begin{aligned}
        &\sum_{i=k-1}^{k+n-m-2} \lambda_i \sum_{j=k-1}^{k+n-m-2} \frac{1}{\lambda_j} - \sum_{i=k}^{k+n-m-1} \lambda_i \sum_{j=k}^{k+n-m-1} \frac{1}{\lambda_j}\\
        &= \sum_{i=k-1}^{k+n-m-2} \frac{\lambda_i}{\alpha} + \frac{\alpha}{\lambda_i}  - \sum_{i=k}^{k+n-m-1} \frac{\lambda}{\lambda_j}+\frac{\lambda_j}{\lambda}
    \end{aligned}
    \end{equation}

    should be positive. But what we will show is that even in a "worst-case scenario", it is not. To maximize the right-hand side of Equation~\eqref{eqn:objdiff}, the eigenvalues not necessarily equal to $\alpha$, which by our assumption lie in the interval $[\alpha,\lambda]$, should be equal to $\lambda$. In that case, all terms in both sums will be equal to either 2 or $\lambda/\alpha + \alpha/\lambda > 2$. However, when $n \geq 3m + 2l$, at least half of the eigenvalues in the sequence $\lambda_k,...,\lambda_{k+n-m-1}$ must be equal to $\alpha$. Then, the left sum will have more terms equal to 2 than the right sum, and hence be lower, rendering the difference between the sums negative. This leads to a contradiction.
\end{proof}

\begin{theorem}\label{thm:Frobenius n-m-l}
    Let $n \geq 2m + 2l$, $A \in {\mathcal L}_{m+l,n}$ and $\alpha$ be the eigenvalue of $A$ that has multiplicity at least $n-m-2$. The nearest limited memory matrix problem
    \begin{equation*}
    \begin{aligned}
    & \underset{X}{\text{minimize}}
    & & \| X - A \|_F \\
    & \text{subject to}
    & & X \in {\mathcal L}_{m,n} \\
    \end{aligned}
    \end{equation*}
    has an optimal solution $X^{\star}$ such that its eigenvalue of multiplicity at least $n-m$ is the arithmetic mean of $n-m-l$ eigenvalues of $A$ equal to $\alpha$ and no more than $l$ eigenvalues of $A$ distinct from $\alpha$.  
\end{theorem}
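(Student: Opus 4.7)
The plan is to follow the structure used in the proofs of Theorems~\ref{thm:l2 n-m-l} and~\ref{thm:tr2 n-m-l}. By Theorem~\ref{thm:lowest_dist_frobenius}, every optimal solution is obtained by fixing a window $S$ of $n-m$ consecutive eigenvalues of $A$ (in non-decreasing order) and setting each of them to their arithmetic mean. Write $B$ for the $\alpha$-block, whose size is at least $n-m-l$ by Proposition~\ref{thm:multiplicity}. The goal is to show that $S$ can be taken so that $B\subseteq S$; it then contains at least $n-m-l$ copies of $\alpha$ and at most $l$ other eigenvalues, which is exactly the claim.

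I would begin with a structural dichotomy. If $S$ contains $B$ or is contained in $B$ we are done. Otherwise $S$ and $B$ overlap only partially, and by consecutiveness of the sorted spectrum all non-$\alpha$ eigenvalues of $S$ lie on the same side of $\alpha$; without loss of generality (the other case is symmetric) they are above $\alpha$, so $S$ has the form $\alpha,\dots,\alpha,\nu_1,\dots,\nu_{n-m-j}$, where $\nu_1\le\nu_2\le\cdots$ enumerates the non-$\alpha$ eigenvalues of $A$ above $\alpha$ and $j$ is the multiplicity of $\alpha$ in $S$. Pigeonhole applied to the at most $m+l$ non-$\alpha$ eigenvalues of $A$ gives $j\ge n-2m-l$, and the hypothesis $n\ge 2m+2l$ then yields the crucial estimate $j\ge l$.

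The key step is to compare $S$ directly with the candidate window $S^{\ast\ast}$ anchored to contain all of $B$, whose remaining $L:=n-m-|B|\le l$ slots are filled with the smallest above-$\alpha$ values $\nu_1,\dots,\nu_L$ (this is admissible under the WLOG assumption because then $A$ must have strictly more than $l$ non-$\alpha$ values above $\alpha$). Writing $e_s=\nu_s-\alpha>0$, the variance identity gives $\mathrm{Cost}(S)=\sum_{s=1}^{n-m-j}e_s^2-D^2/(n-m)$ with $D=\sum_{s=1}^{n-m-j}e_s$, and $\mathrm{Cost}(S^{\ast\ast})=\sum_{s=1}^{L}e_s^2-E^2/(n-m)$ with $E=\sum_{s=1}^{L}e_s$. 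Setting $F=D-E$ and $p=n-m-j-L\ge 1$, a direct expansion yields
\begin{equation*}
\mathrm{Cost}(S)-\mathrm{Cost}(S^{\ast\ast})=\sum_{s=L+1}^{n-m-j}e_s^2-\frac{F(2E+F)}{n-m}.
\end{equation*}
Cauchy-Schwarz gives $\sum_{s=L+1}^{n-m-j}e_s^2\ge F^2/p$; combined with the sortedness inequality $F/p\ge E/L$ (trivially handled when $L=0$ since then $\mathrm{Cost}(S^{\ast\ast})=0$), a short calculation using $n-m-p=j+L$ collapses the difference to
\begin{equation*}
\mathrm{Cost}(S)-\mathrm{Cost}(S^{\ast\ast})\ \ge\ \frac{EF(j-L)}{L(n-m)}\ \ge\ 0,
\end{equation*}
because $j\ge l\ge L$. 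Thus $S^{\ast\ast}$ is at least as good as $S$, and the corresponding optimal solution has the advertised structure.

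The main obstacle is obtaining the claim under the sharp hypothesis $n\ge 2m+2l$: a naive single-step shift argument of the kind used for the trace-logdet variant in Theorem~\ref{thm:tr2 n-m-l} only closes under the stronger $n\ge 3m+2l$. Comparing $S$ directly with the anchored window $S^{\ast\ast}$ (rather than with a neighbouring shift) lets Cauchy-Schwarz absorb the missing factor, and the inequality chain terminates exactly at the condition $j\ge L$, which the hypothesis $n\ge 2m+2l$ supplies through pigeonhole.
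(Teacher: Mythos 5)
Your proof is correct, but it follows a genuinely different route from the paper's. The paper first performs a worst-case reduction: holding the window index $k$ fixed, it differentiates the cost difference $D(k,1,\lambda_1,\dots,\lambda_n)$ with respect to the free eigenvalues and shows the adversarial configuration is the one where all $m+l$ non-$\alpha$ eigenvalues collapse to a single value $\lambda$; it then evaluates the cost explicitly as $\frac{(n-m-l-k+1)(l+k-1)}{n-m}(\lambda-\alpha)^2$ and checks, via a quadratic-in-$k$ endpoint argument, that $k=1$ wins, with the final margin $(n-2m-2l)m\geq 0$ consuming the hypothesis. You instead compare an arbitrary window $S$ directly against the anchored window $S^{\ast\ast}$ for arbitrary eigenvalue configurations, using the variance identity, the power-mean (Cauchy--Schwarz) bound $\sum_{s>L}e_s^2\geq F^2/p$, and the sortedness inequality $F/p\geq E/L$; the hypothesis enters only through the pigeonhole bound $j\geq n-2m-l\geq l\geq L$. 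I checked your algebra (including $n-m-p=j+L$ and the collapse to $EF(j-L)/(L(n-m))$, and the side conditions $p\geq 1$, $E,F>0$, and the $L=0$ degenerate case) and it is sound; the WLOG dichotomy on which side of $\alpha$ the window overhangs is also complete since $S$ and the $\alpha$-block are both intervals in the sorted spectrum. Your approach buys a shorter, configuration-free argument that makes transparent exactly where $n\geq 2m+2l$ is needed ($j\geq L$), at the cost of a somewhat less mechanical inequality chain; the paper's approach is more computational but has the side benefit of exhibiting the extremal eigenvalue configuration explicitly, which is also what underlies its counterexample discussion for the Stein losses. One cosmetic note: the theorem statement's ``multiplicity at least $n-m-2$'' is evidently a typo for $n-m-l$, and you correctly worked with the latter.
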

\begin{proof}
By Theorem \ref{thm:lowest_dist_frobenius}, the problem has an optimal solution where the eigenvalue of multiplicity $n-m$ is the mean of $n-m$ consecutive eigenvalues of $A$. An optimal solution can be found by setting $n-m$ of the eigenvalues of $A$ to their average while leaving the other eigenvalues and the eigenvectors unchanged. We shall prove that there will always exist an optimal sequence containing $n-m-2$ eigenvalues equal to $\alpha$ for the considered case. The proof has two parts: First, we determine the worst case locations of the eigenvalues of $A$, under some constraints that can be imposed without loss of generality. We then show that using the sequence with all eigenvalues equal to $\alpha$ is optimal even in this case. Since we am only considering consecutive sequences, We can rule out any sequence that does not contain $n-m-l$ eigenvalues equal to $\alpha$, but does contain eigenvalues both lesser and greater than $\alpha$. Without loss of generality, the considered sequences can be restricted to ones where $\alpha$ is the minimum eigenvalue in the sequence. The eigenvalues that are not going to be included in a sequence can without loss of generality be assumed to be greater than $\alpha$ too. The case where there are $n-m$ eigenvalues equal to $\alpha$ is trivial and does not need to be considered. Denote the eigenvalues of $A$ by $\lambda_1,...,\lambda_n$, where $\lambda_1=\lambda_2=...=\lambda_{n-m-l} = \alpha$,
$\lambda_{j} \in [\alpha,\lambda_{n-m}] \ \forall j \in \{n-m-l+1,...,n-m-1\}$, $\lambda_{n-m} = \lambda > \alpha$ and $\lambda_{j} \geq \lambda \ \forall j \in \{n-m+1,...,n\}$. Let $F(k,\lambda_1,...,\lambda_n)$ be the Frobenius norm of the difference between $X$ and $A$ when $X$ is obtained from $A$ by setting all eigenvalues in the sequence $\lambda_{k},...,\lambda_{k+n-m-1}, \ k \in \{1,...,m+1\}$ to their mean but leaving all other properties of $A$ unchanged. Then, 
\begin{align*}
    F(k,\lambda_1,...,\lambda_n) = \sum_{i=k}^{k+n-m-1} (\lambda_i - \bar \lambda_k)^2 &,\quad \bar \lambda_k = \frac{1}{n-m} \sum_{i=k}^{k+n-m-1} \lambda_i.
\end{align*}
Furthermore, let $D(k_1,k_2,\lambda_1,...,\lambda_n) = F(k_1,\lambda_1,...,\lambda_n) - F(k_2,\lambda_1,...,\lambda_n)$. Letting $\alpha$ and $\lambda$ and $k > 1$ be fixed, the worst-case locations of the eigenvalues are given by the solution of the following optimization problem:
\begin{equation*}
\begin{aligned}
& \underset{\lambda_{1},...,\lambda_n}{\text{minimize}}
& & D(k,1,\lambda_1,...,\lambda_{n}) \\
& \text{subject to}
& & \lambda_j = \alpha, &\forall j \in \{1,...,n-m-l\}, \\
& & & \lambda_{j} \in [\alpha,\lambda], &\forall j \in \{n-m-l+1,...,n-m-1\}, \\
& & & \lambda_{n-m} = \lambda, \\
& & & \lambda_j \geq \lambda, &\forall j \in \{n-m+1,...,k+n-m-1\},\\
& & & \lambda_j \leq \lambda_{j+1}, &\forall j \in \{1,...,n-1\}.
\end{aligned}
\end{equation*}
Consider the derivative of $F(k,\lambda_1,...,\lambda_n)$ with respect to $\lambda_j$ for $j \in \{k,...,
k+n-m-1\}$. 
\begin{equation*}
\begin{aligned}
\frac{\partial F(k,\lambda_1,...,\lambda_n)}{\partial \lambda_j} & = 2 \frac{n-m-1}{n-m}(\lambda_j \\
& - \bar \lambda_k) - 2\frac{1}{n-m}\underset{i \neq j}{\sum_{i \in \{k,...,k+n-m-1\}}} (\lambda_i - \bar \lambda_k) = 2 (\lambda_j - \bar \lambda_k).
\end{aligned}
\end{equation*}
From this, it follows that 
\begin{equation*}
\frac{\partial D(k,1,\lambda_1,...,\lambda_n)}{\partial \lambda_j} = \begin{cases}
2(\bar \lambda_1 - \lambda_j), \quad j \in \{1,...,k-1\}, \\
2(\bar \lambda_1 - \bar \lambda_k), \quad j \in \{k,n-m\},\\
2(\lambda_j - \bar \lambda_k), \quad j \in \{n-m+1,k+n-m-1\},\\
0, \quad j \in \{ k+n-m,n\}.
\end{cases}
\end{equation*}

The derivative of $D(k,1,\lambda_1,...,\lambda_n)$ with respect to any of the eigenvalues constrained to lie between $\alpha$ and $\lambda$ is $2 (\bar \lambda_1 - \bar \lambda_k)$, and since $\bar \lambda_1 - \bar \lambda_k <0$ for all combinations of eigenvalues satisfying the constraints, $D(k,1,\lambda_1,...,\lambda_n)$ is minimized with respect to these variables by setting them to their maximum value according to the constraints, $\lambda$. The derivative with respect to $\lambda_j$ for $j \in \{n-m-1,...,n\}$ is either $0$ or $2(\lambda_j - \bar \lambda_k)$, implying that $D(k,1,\lambda_1,...,\lambda_n)$ is minimized with respect to these variables by choosing them to be as close to $\bar \lambda_k$ as possible when satisfying the constraints. This is achieved by setting them equal to $\lambda$ too. Thus, for a fixed $\alpha$, $k$ and $\lambda$, one worst case scenario is a case where $n-m-l$ eigenvalues are equal to $\alpha$ and $m + l$ eigenvalues are equal to $\lambda$. In the sequence $\lambda_k,...,\lambda_{k+n-m-1}$, there will then be $n-m-l-k+1$ eigenvalues equal to $\alpha$ and $k-1+l$ eigenvalues equal to $\lambda$. Then,
\begin{equation}\label{eqn:Fnorm_worst_case}
    F(k,\alpha,...,\alpha,\lambda,...,\lambda) = \frac{(n-m-l-k+1)(l+k-1)}{n-m}(\lambda-\alpha)^2.
\end{equation}
The only factor in Equation~\eqref{eqn:Fnorm_worst_case} that varies with $k$ is the numerator of the fraction, so it suffices to check that for $k \in \{1,...,m+1\}$, it is minimized for $k=1$. Since it is a quadratic function of $k$ with a negative second-order coefficient, its minimum on the set must be at either $1$ or $m+1$. For $k=1$, the numerator is $(n-m-l)l$, and for $k=m+1$, it is $(n-2m-l)(m+l)$. The difference is $(n-2m-l)(m+l) - (n-m-l)l = (n-2m-2l)m$, and by the assumption that $n-m-l\geq m + l, (n-2m-2l)m \geq 0$. 
\end{proof}

While the theorems above show the convenience of reducing matrices by using the matrix dissimilarity measure each of them applies to, no corresponding theorems exist for the Stein loss and inverse Stein loss. A particular form of counterexample to any such theorem is when $A$ has $n-m-l$ eigenvalues equal to $\alpha$ and $m+l$ eigenvalues equal to $\lambda$. By Theorem~\ref{thm:lowest_dist_tr_logdet1}, when minimizing $\tr(X A^{-1}) - \log \det (X A^{-1})$ it is optimal to form $X \in \mathcal{L}_{m,n}$ by setting all eigenvalues in a consecutive sequence to their harmonic mean. If $\lambda/\alpha$ is small enough, choosing the sequence that includes all eigenvalues equal to $\alpha$ will always lead to higher matrix dissimilarity than choosing the sequence that contains one more eigenvalue equal to $\lambda$. Similarly, Theorem~\ref{thm:lowest_dist_tr_logdet2}, says that when minimizing $\tr(X^{-1} A) - \log \det (X^{-1} A)$, it is optimal to set a sequence of eigenvalues of $A$ to their arithmetic mean. If $\lambda/\alpha$ is large enough, choosing the sequence containing all eigenvalues equal to $\alpha$ will lead to greater matrix dissimilarity than choosing the one with one more eigenvalue equal to $\lambda$.

\section{Novel limited memory quasi Newton algorithms}\label{sec:NovelQN}

Having considered general nearest matrix optimization problems, we will now focus on how they can be applied to implement limited memory adaptations of quasi-Newton methods. The basic idea of our methods will be to start with a Hessian approximation $B_k\in {\mathcal L}_{m,n}$, use the most recent iterate and gradient in one of many Broyden-class updates to form a matrix  $\hat{B}_{k+1}\in {\mathcal L}_{m+2,n}$, use this updated Hessian approximation to generate the next iterate, and then reduce the Hessian approximation back to $B_{k+1}\in {\mathcal L}_{m,n}$ before the next iteration. If the Hessian approximation in iteration $k$ of a quasi-Newton method belongs to $\mathcal{L}_{m,n}$, it can be written as
\begin{equation*}
    B_k = \alpha_k I + U_k C_k U_k^T.
\end{equation*}
The update formula for a general Broyden class method, with the commonly utilized notation of $s_k$ for the difference between two subsequent iterates and $y_k$ for the corresponding gradients, can when the Broyden class interpolation parameter $\phi_k \neq 1$, be expressed as
\begin{equation}\label{eqn:Broyden_short}
   B_{k+1} = B_k +  \frac{\phi_k - 1}{s_k^TB_ks_k} c_{k} c_{k}^T + \left(1 - \frac{\phi_k s_k^TB_ks_k}{s_k^Ty_k(\phi_k - 1)} \right) \frac{y_k y_k^T}{s_k^Ty_k},
\end{equation}
where
\begin{equation*}
    c_{k} =  B_ks_k - \frac{s_k^TB_ks_k \phi_k}{(\phi_k-1)s_k^Ty_k}y_k.
\end{equation*}
Note that the low-rank plus shift structure is preserved and that the rank of the low-rank term increases by at most two. The update can be written as
\begin{equation}\label{eqn:Broyden_compact}
\begin{aligned}
    B_{k+1} & = \alpha_k I + \begin{bmatrix} U_k & c_k & y_k \end{bmatrix} \begin{bmatrix} C_k & 0 & 0 \\ 0 & \frac{\phi_k - 1}{s_k^TB_ks_k} & 0 \\ 0 & 0 &  1 - \frac{\phi_k s_k^TB_ks_k}{s_k^Ty_k(\phi_k - 1)} \end{bmatrix} \begin{bmatrix} U_k^T \\ c_k^T \\ y_k^T \end{bmatrix} \\
    & := \alpha_{k} I + (U_{k+1})^T C_{k+1} U_{k+1}.
\end{aligned}
\end{equation}
The DFP update, for which $\phi_k=1$, must be considered separately since the Broyden class update formula above is not defined in this case. Hence, we can not identify $C_{k+1}$ and $U_{k+1}$ as above, but can instead choose them as
\begin{equation}\label{eqn:U+_alt}
    U_{k+1} = \begin{bmatrix} U_k & B_{k} s_k & y_k
    \end{bmatrix}
\end{equation}
and
\begin{equation}\label{eqn:A+_alt}
    C_{k+1} = \begin{bmatrix} C_k & 0 & 0 \\ 0 & 0 & -\frac{1}{s_k^Ty_k} \\ 0 & -\frac{1}{s_k^Ty_k} &  \frac{1}{s_k^Ty_k}\left(\frac{s_k^T B_{k} s_k}{s_k^Ty_k} +1 \right) \end{bmatrix}.
\end{equation}

We see that $ \widehat B_{k+1}$, when computed from $B_{k}\in \mathcal{L}_{m,n}$ using a Broyden class formula, still ends up in $\mathcal{L}_{m+2,n}$. As we outlined in Section~\ref{subsec:reduction}, a limited memory quasi-Newton method can be derived by reducing the matrix back to ${\mathcal L}_{m,n}$ by solving one of the matrix optimization problems defined in Section~\ref{sec:NLMMP}. This requires us to perform an eigenvalue decomposition of the updated matrix, but this can be done efficiently using the techniques discussed in Section~\ref{subsec:reduction}, with complexity $O(m^2n + m^3)$. A more detailed discussion of this is given in the appendix, Section~\ref{sec:Eigendecomposition}. We can also note that some variations are possible: Instead of reducing the Hessian approximation every iteration, we could use greater memory reductions with a lower frequency. In any case, the basic principles are the same. 

Although algorithms in the proposed framework require more computations per iteration than the usual limited-memory Broyden-class methods, we want to emphasize that in the intended setting, it is the evaluations of the objective function and its gradient that take up most of the time and that an increase in computations in exchange for a lower number of function and gradient evaluations is a desirable trade-off. The computational complexity is similar to the worst-case complexity of the AGG-BFGS algorithm of Berahas et al. \cite{AGG_BFGS}, which with limited memory can reproduce the full memory BFGS iterations when the differences in iterates lie in a low-dimensional subspace. This will also be the case for a limited memory version of BFGS that applies matrix reduction to limit its memory requirements if it has the same amount of memory available as AGG-BFGS that reproduces full memory BFGS exactly (i.e. can store at least twice as many vectors as the dimension of the low-dimensional subspace). However, an important difference is that AGG-BFGS gets rid of old curvature information when an iterate displacement $s_k$ does not seem to lie in the span of previous ones, while the algorithms in our framework apply an optimality principle to minimize the necessary change in the matrix to be able to store it with limited memory. 

\subsection{Limitations of scope}\label{sec:limitations}

To evaluate every possible quasi-Newton update with every possible nearest-matrix formulation would be a gargantuan task. We therefore limit the scope of our numerical experiments with quasi-Newton methods to one based on the BFGS update ($\phi_k = 0$ in the Broyden class update). As BFGS is one of the most widely used Broyden-class methods and is considered to be among the best, we believe that it makes sense to focus on it. We further limit the type of matrix reduction to the minimization of $l^2$ and Frobenius norms of matrix differences. These matrix dissimilarity measures can in most applications, where $n \gg m$, be minimized without the need to explicitly generate any eigenvectors corresponding to $\alpha_k$. This follows from theorems~\ref{thm:l2 n-m-l} and~\ref{thm:Frobenius n-m-l} respectively. We take particular interest in comparing their difference in performance based on a qualitative difference between them. Minimizing the $l^2$-norm makes the algorithm completely ``forget´´ some eigenvalue information in each iteration, in the sense that while only two eigenvalues determine the value of $\alpha_{k+1}$, the values of the eigenvalues between them are discarded without influencing any property of the updated matrix. In contrast, all eigenvalues changed when minimizing the Frobenius norm influence the value of $\alpha_{k+1}$, meaning that they continue to influence future iterations. A hypothesis based on this is that using the Frobenius norm is more suitable when minimizing functions with a constant, or near-constant Hessian, while the $l^2$-norm will be better suited to functions for which the Hessian changes more between iterations.  

One more choice that we make in our implementations is to combine the limited memory quasi-Newton method with a trust-region method. This is due to the synergy that exists between these types of methods. The memory-reduction part of the quasi-Newton method relies on having an eigendecomposition of the matrix to be reduced available, and this factorization of the matrix is also very advantageous to have when solving trust-region problems. We elaborate on the specifics of this below and refer to the appendix, Section~\ref{sec:TR}, for an overview of trust-region methods.

\subsection{Efficient implementation of a trust-region algorithm}

To summarize, the algorithms that we implement carry out the following three steps throughout an iteration:
\begin{enumerate}
    \item Update the Hessian approximation $B_k$ according to the BFGS formula to obtain $\widehat{B}_{k+1}.$
    \item Solve a trust-region problem with the updated Hessian approximation $\widehat{B}_{k+1}$ using Algorithm~\ref{alg:tr_sol} (shown in the appendix).
    \item Reduce the memory requirements of the Hessian approximation $\widehat{B}_{k+1}$ by replacing it with the closest matrix, either in $l^2$-norm or Frobenius norm, with $B_{k+1}$ in $\mathcal{L}_{m,n}$. 
\end{enumerate}
The trust region problem is solved using $\widehat{B}_{k+1}$ since this matrix is guaranteed to satisfy the secant equation. Because the eigendecomposition is useful in both steps 2 and 3, it is partially carried out before the trust region problem is solved. As we discuss in Section~\ref{sec:Eigendecomposition} of the appendix, we can write matrices in $\widehat B_{k+1}$ on the form
\begin{equation}
    \widehat B_{k+1} = \alpha_k I +  Q_{k+1} V_{k+1} \Lambda_{k+1} V_{k+1}^T Q_{k+1}^T, 
\end{equation}
where $\Lambda_{k+1} \in \mathbb{R}^{m \times m}$ is diagonal and $Q_{k+1} \in \mathbb{R}^{n\times m}$ and $V_{k+1} \in \mathbb{R}^{m\times m}$ are orthonormal matrices. It is evident from this factorization that the eigenvectors of $\widehat B_{k+1}$ are the columns of $E_{k+1} := Q_{k+1} V_{k+1}$, as well as any vectors orthogonal to them. However, it is not necessary to compute them explicitly to solve the trust-region problem. One way to take advantage of parallel computation capabilities is to delay the computation of $E_{k+1}$ until the trust-region subproblem is solved. By doing so, a new gradient can be computed at the same time as $E_{k+1}$, saving total computation time. Since forming $E_{k+1}$ has computational complexity $O(nm^2)$, the per-iteration time can be reduced considerably by performing this operation in parallel with other time-consuming operations. 

The main synergy between our limited memory methods and the trust-region framework (described in Section~\ref{sec:TR} of the appendix) is that the iterations of the trust-region subroutine can be carried out without explicitly forming any $n$-dimensional vectors. A trust-region method works by finding an optimal increment $p$ to add to the current iterate. For the trust-region problem that we consider, the conditions for $p$ to be an optimal solution is that $(\widehat B_{k+1} + \sigma I)p = -g_k$, where $g_k$ is the current gradient, and $(\|p\|-d_k) \sigma = 0$, for some real number $\sigma \geq 0$. The trust region algorithm works by finding tentative values of $\sigma$, and defines a $p$ for every such value in its iterations. It is however not necessary to explicitly form $p$ until it has been found to satisfy the stopping criterion for the trust-region subroutine. Only $\|p\|_2$ and $p^T(\widehat B_{k+1}+\sigma I)^{-1}p$ are needed to obtain the next iterate, and the structure of $\widehat B_{k+1}$ can be leveraged to compute them efficiently. A description of how to do this now follows.

Let $h_k = V_{k+1}^TQ^T_{k+1}g_k$, and let $g_k^{\bot}$ be the orthogonal component of $g_k$ relative to the range of $Q_{k+1}$, and thereby also the orthogonal component relative to the range of $Q_{k+1} V_{k+1}$. These quantities can be computed by first introducing $\widehat h_k = Q^T_{k+1}g_k$ and then evaluating $h_k = V_{k+1}^T \widehat h_k$, and $g_k^{\bot} = g_{k} - \sum_{i=1}^{m+2} \widehat h_{k}^{(i)} q_{i,k+1}$, where $ \widehat h_{k}^{(i)}$ is element $i$ of the vector $ \widehat h_{k}$ and $q_{i,k+1}$ is column $i$ of $Q_{k+1}$. In the parts of the algorithm where iterative computations are used to find $\sigma$ that fulfill the optimality conditions, $p$ is uniquely defined by $\sigma$ and can be written as a function $p(\sigma)$. We have that
\begin{equation*}
\begin{aligned}
    p(\sigma) & = -(\widehat B_{k+1} + \sigma I )^{-1} g_{k} \\
    & = - ((\alpha_{k} + \sigma)I + U_{k+1}V_{k+1}\Lambda_{k+1}V_{k+1}^TQ^T_{k+1})^{-1} (g_{k}^{\bot} + Q_{k+1}V_{k+1} h_{k}) \\
    & = -\frac{1}{\alpha_{k} + \sigma}g_{k}^{\bot} - Q_{k+1}V_{k+1} (I (\sigma + \alpha_{k}) + \Lambda_{k+1})^{-1} h_k.
\end{aligned}\label{eqn:psigma}
\end{equation*}
Hence,
\begin{equation*}\label{eqn:pnorm}
\begin{aligned}
    & \|p(\sigma) \|_2 = \sqrt{\frac{\|g_{k}^{\bot}\|_2^2}{(\alpha_{k} + \sigma)^2} + h_k^T ((\sigma + \alpha_{k})I + \Lambda_{k+1})^{-2} h_k}, \\
    & p(\sigma)^T (\widehat B_{k+1}+\sigma I)^{-1}p(\sigma) = \frac{\|g_{k}^{\bot}\|_2^2}{(\alpha_{k} + \sigma)^3} + h_k^T ((\sigma + \alpha_{k})I + \Lambda_{k+1})^{-3} h_k.
\end{aligned}
\end{equation*}
We can note that once $h_k$ and $\|g_k^{\bot}\|$ have been computed (which can be done with complexity $O(nm)$), only $(m+2)$-dimensional vectors and $(m+2)\times(m+2)$ matrices are required to compute the above quantities. This makes the trust-region subroutine very efficient when $m \ll n$.

\subsection{A note about limits on eigenvalues to ensure convergence}

While limited memory quasi-Newton methods have had success in practice, to the best of our knowledge they have no theoretical advantages over accelerated variants of gradient descent when minimizing general twice-differentiable functions, apart from special cases such as when the objective function is quadratic. In general, their convergence is linear, and convergence proofs typically hinge on limits on the eigenvalues of the Hessian approximations. Given that the eigenvalues of $\widehat B_{k+1}$ are upper bounded throughout the iterations, we can make use of the trust-region framework described in Section~\ref{sec:TR} to demonstrate convergence. In particular, we direct readers interested in a convergence result to Theorem~\ref{thm:trust-convergence} and the discussion after it.

For the methods that we propose in this paper, one advantage is that the eigenvalues of the Hessian approximations are readily available, and their boundedness can be monitored. It is also possible to force boundedness of the eigenvalues by including it as a constraint when reducing the matrix, and Theorems~\ref{thm:eigenv} and~\ref{thm:eigenv_tr_logdet} ensure that it is sufficient to change only the eigenvalues to solve this optimization problem. When reducing matrices with respect to the $l^2$ norm, it is particularly easy to incorporate a constraint that all eigenvalues lie in an interval $[-\lambda_{\max},\lambda_{\max}]$, as it can be done by projecting the solution computed without this constraint onto the interval. For other matrix dissimilarity measures, first solving the nearest limited memory matrix problem without interval constraints on the eigenvalues and then projecting, is a heuristic method that does not always yield an optimal solution. We will not delve further into the problem of finding the nearest limited memory matrix with eigenvalue constraints here, as we do not make use of it in our implementation, but merely note that it is a possible way of ensuring theoretical convergence guarantees. 

\section{Numerical experiments}

In our numerical experiments, we test the aforementioned implementations of limited memory BFGS, where we keep the memory requirements limited by reducing the Hessian approximation with respect to the $l^2$-norm and the Frobenius norm. We call the former algorithm L2-BFGS and the latter LF-BFGS. While we have shown how to solve the associated matrix reduction problems analytically in exact arithmetic, it is also interesting to investigate how large the errors become when they are solved numerically, to see if a solver can be implemented in a numerically stable way. To that end, we replicate a test from \cite{AGG_BFGS} which studies an algorithm's numerical stability in reproducing the full memory BFGS Hessian approximation when that is possible. We then consider some data-driven test problems with relatively slowly varying Hessians, a class of optimization problems that motivated our developments. Firstly, we perform tests on a logistic regression problem with the publicly available data-set gisette. Thereafter, we run Monte Carlo simulations with randomly generated least-squares problems. Finally, to investigate how the methods perform on more general problems, we evaluate the proposed trust region methods on the CUTEst test suite. Throughout, we compare the convergence speed of L2-BFGS and LF-BFGS to MSS \cite{MSS}, a trust-region implementation of the conventional L-BFGS method. For the CUTEst problems, we also include a comparison with a line search implementation of L-BFGS. To avoid confusion of notation in these comparisons, We redefine the memory parameter $m$ in this section so that it matches standard L-BFGS terminology: $2m$ is here the maximum number of vectors stored to define the Hessian approximation at the beginning of an iteration, which corresponds to the number of stored curvature pairs $(s_k,y_k)$ for conventional L-BFGS.

\subsection{Curvature aggregation test}

The curvature information aggregation test is a Monte Carlo simulation performed for different memory parameters $m$ and numbers of variables $n$. In each realization, a sequence of curvature pairs $(s_1,y_1),...,(s_m,y_m)$ is generated by performing gradient descent with noisy gradients on a randomly generated quadratic optimization problem. Then, another curvature pair $(s_0,y_0)$ is generated in such a way that $s_0$ is a sum of $s_1,...,s_m$ with weights sampled from the standard normal distribution. The sequence of curvature pairs $(s_0,y_0),...,(s_m,y_m)$ is used to generate both the full memory and limited memory inverse Hessian approximations, using the inverse BFGS update. After this, a relative error, defined as the maximum difference between two corresponding elements in the inverse Hessian approximations, divided by the largest absolute value of an element of the full memory BFGS inverse Hessian approximation, is computed and recorded. The details of this procedure can be found in \cite{AGG_BFGS}. Essentially, the test scenario creates a situation in which a limited memory method could, in exact arithmetic, recreate the full-memory BFGS approximation. If this can be done numerically without large errors it means that the limited memory algorithm can achieve superlinear convergence in a situation where this is possible. 

Figure~\ref{fig:Agg_2_0} shows results for a test series with L2-BFGS, with boxplots of relative errors for different values of $n$ and $m$. Figure~\ref{fig:Agg_F_0} shows the corresponding plot for LF-BFGS, and we can see that the tests with both algorithms yielded similar results. Different values of the tolerance parameter $\nu$, as defined in Section~\ref{sec:Eigendecomposition} of the appendix, were tried. In the end, $\nu=0$ was found to yield the lowest relative errors. When comparing the results for L2-BFGS and LF-BFGS to the results for AGG-BFGS in the figure on page 21 in \cite{AGG_BFGS}, one can see that our algorithms have relative errors of a similar order of magnitude in most tests. In the worst-case scenario, where $m=n$, both LF-BFGS and L2-BFGS outperform AGG-BFGS. When $m<n$, the errors are slightly higher for LF-BFGS and L2-BFGS than AGG-BFGS. One difference between the AGG-BFGS, LF-BFGS and L2-BFGS methods is that AGG-BFGS stores unmodified curvature pairs up until the last iteration of the test, in which it is necessary to aggregate the stored pairs, while L2-BFGS and LF-BFGS modify the stored vectors in each iteration. This introduces rounding errors in every step and not just the last, explaining why L2-BFGS and LF-BFGS had higher relative errors than AGG-BFGS in most of the test runs, and that the difference is greatest when the test runs consist of many iterations. With this in mind, L2-BFGS and LF-BFGS outperforming AGG-BFGS when $m=n$ is particularly significant and speaks in favor of the numerical stability of their implementations. When $m=n$, $s_k$ and $y_k$ are guaranteed to eventually lie in the span of the stored eigenvectors even when rounding errors are present. As detailed in Section~\ref{sec:Eigendecomposition} of the appendix, our limited memory algorithms have a step where they explicitly detect and remove linearly dependent vectors after the inverse BFGS update, and that could explain why they perform especially well when $m = n$. 

\begin{figure}
    \centering
    \makebox[\textwidth][c]{\includegraphics[width=1.3\textwidth]{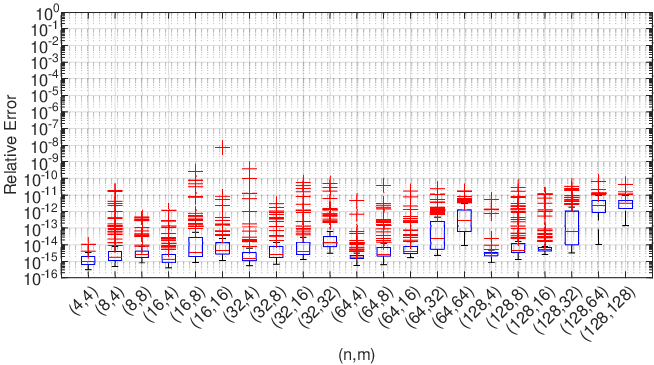}}
    \caption{Curvature aggregation test with the $l^2$ norm.}
    \label{fig:Agg_2_0}
\end{figure} 

\begin{figure}
    \centering
    \makebox[\textwidth][c]{\includegraphics[width=1.3\textwidth]{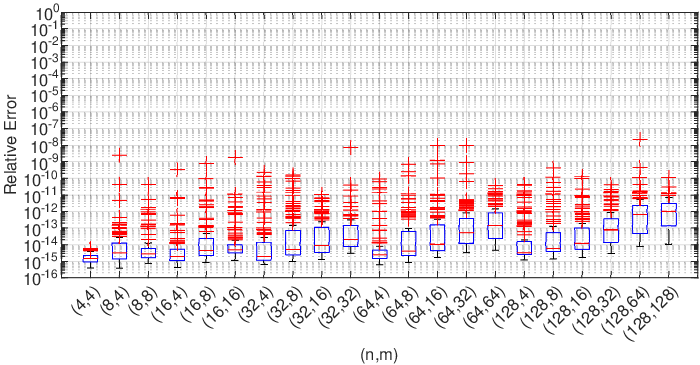}}
    \caption{Curvature aggregation test with the Frobenius norm.}
    \label{fig:Agg_F_0}
\end{figure} 

\subsection{Logistic regression}

In a supervised learning setting, data is available as pairs of features and labels, and the task is to find a function that can predict the label of a new sample, given its features. One statistical model for doing so is logistic regression, and in this test, we see how the algorithms perform when minimizing the objective function in a logistic regression problem. The purpose of this test, which was our first test chronologically, was to get an idea of whether our algorithms would perform well on a problem resembling those that could arise from real-world applications. To this end, we used the LIBSVM \cite{LIBSVM} dataset gisette, with 5000 variables and 6000 features. This dataset was first used in a challenge for support-vector machines, where the task was to distinguish handwritten digits.

To study how the performance of the algorithms depended on the parameter $m$, We ran them with $m=2,4,8$ and $16$. For each algorithm and each value of $m$, We checked the number of iterations necessary until $\|f(x_k)\|_2\leq 10^{-6}$. For each algorithm, we selected the value of $m$ with which it would fulfill this convergence criterion in the lowest number of iterations, and let the algorithms run for the maximum number of iterations necessary for them to satisfy this criterion. The objective function values for this test run are plotted in Figure \ref{fig:logistic_regression_gisette_comp}. We can see that L2-BFGS has the best performance in this test. Notably, for $m \in \{2,4,8,16\}$, it had the fastest fulfillment of the convergence criterion for $m=2$, satisfying it in 30 iterations. As a comparison, MSS had the best performance for $m=16$, satisfying the convergence criterion in 50 iterations, and LF-BFGS had it for $m=8$ and $m=16$, satisfying the criterion after 66 iterations. This result indicates that L2-BFGS with a low value of $m$ can be memory-efficient compared to trust-region implementations of traditional L-BFGS. It also points to a trade-off between retaining relevant and removing outdated curvature information, something that for some problems can be achieved better with a lower value of $m$. In addition, it aligns with the hypothesis stated in Section~\ref{sec:limitations}, that L2-BFGS performs better than LF-BFGS for problems with a Hessian that varies between iterations, due to how its matrix reduction procedure "forgets", rather than "aggregates" eigenvalues.

\begin{figure}
    \centering
    \includegraphics[width = 0.65 \textwidth]{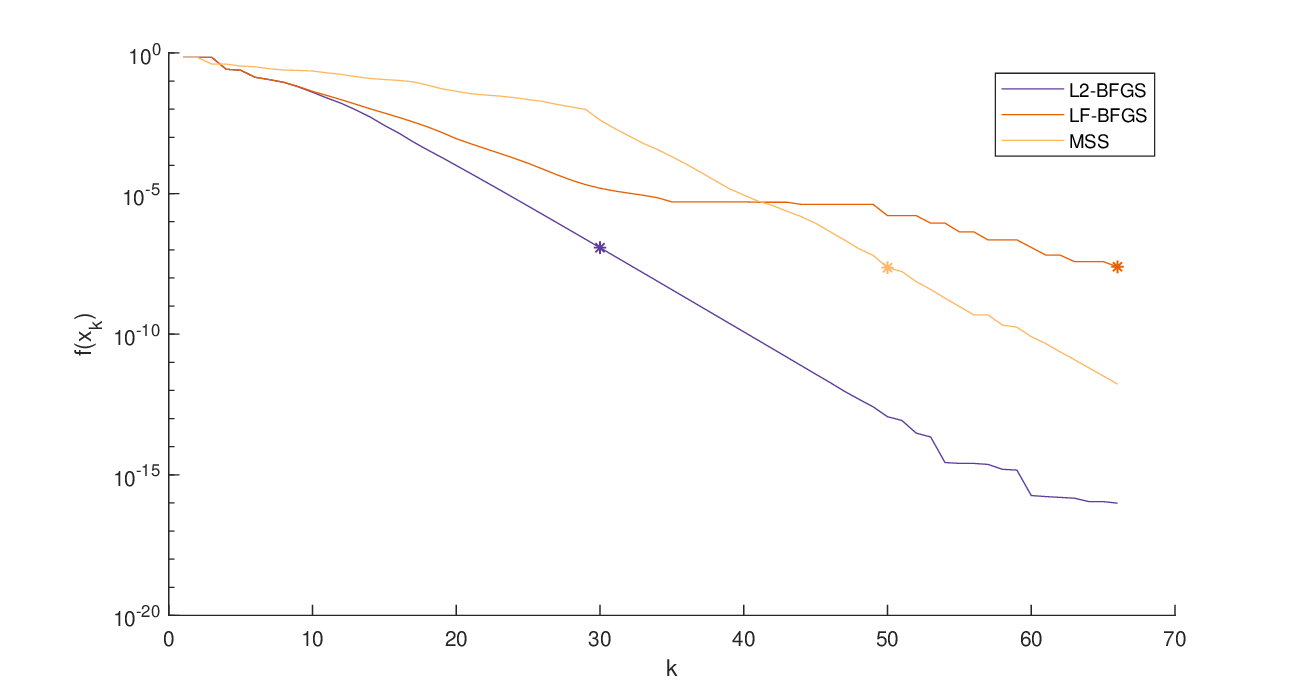}
    \caption{Results from the test with logistic regression, when using the best values of $m$ for each algorithm. The stars on the graphs mark at which point each of the algorithms fulfill the convergence condition $\|\nabla f(x_k)\|_2 \leq 10^{-6}$.}
    \label{fig:logistic_regression_gisette_comp}
\end{figure}

\subsection{Randomly generated quadratic problems}

In the setting of unconstrained optimization, the performance of an algorithm on quadratic problems can give a good indication of its convergence properties near a local optimum for general problems with analytic functions, as the quadratic term in the Taylor expansion dominates all other terms there. Quadratic problems also provide a convenient standard form for random problem generation. This suits the purpose of our second numerical test, which is a systematic comparison of the performance of MSS, L2-BFGS and LF-BFGS when applied to solve such problems, which naturally appear in applications in the form of linear least-squares problems.

For this test, we perform Monte Carlo simulations where 1000 problems with objective function on the form $\Vert Cx-d\Vert^2$ with $x\in \mathbb{R}^{1000}$ are generated. The left and right singular vectors of the matrices $C$ are generated in the same way as in \cite{Random_QP}, while the eigenvalues are sampled from a log-normal distribution where the logarithms have mean 0 and variance 1. The vector $d$ is chosen as $C\mathbbm{1}$. All iterations start from $x_0 = \mathbf{0}$ and for each iteration, the distance to the known optimal solution is recorded. The results of the Monte Carlo simulation can be seen in Figure \ref{fig:Monte_Carlo_QP_lognormal_m5}, for a test when $m=5$ and Figure \ref{fig:Monte_Carlo_QP_lognormal_m16}, for a similar test with $m=16$. The plots show the mean 10-logarithms of the normalized Euclidean distances to the optimum, plotted against the iteration number and with 3 standard deviation confidence interval borders in dashed lines. These confidence intervals do not overlap for most iterations, indicating a statistically significant difference in the algorithms' performances. In both tests, the mean distances to the optimal solutions are the lowest for LF-BFGS for most of the iterations. This corroborates the hypothesis put forth in Section \ref{sec:limitations} that reducing with respect to the Frobenius norm is better when the Hessian of the objective function does not change between iterations. A comparison of Figure \ref{fig:Monte_Carlo_QP_lognormal_m5} and Figure \ref{fig:Monte_Carlo_QP_lognormal_m16} reveals that while both LF-BFGS and MSS get an improved performance from the increase in $m$, L2-BFGS does not. 

\begin{figure}
\begin{subfigure}{.47\textwidth}
  \centering
  \includegraphics[width=\linewidth,height=.7\linewidth]{{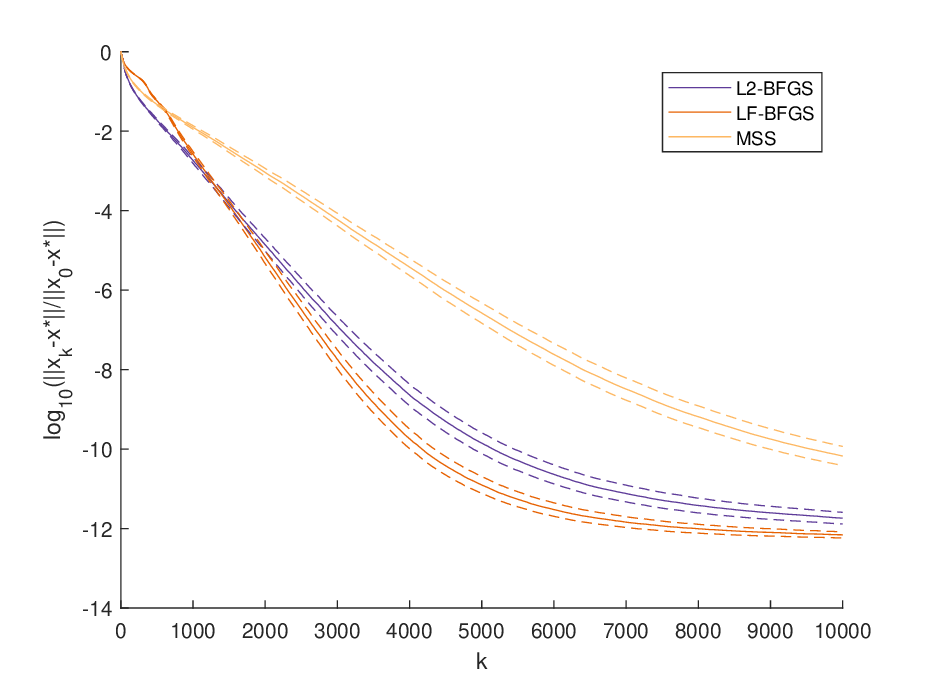}}
  \caption{m=5}
  \label{fig:Monte_Carlo_QP_lognormal_m5}
\end{subfigure}
\begin{subfigure}{.47\textwidth}
  \centering
  \includegraphics[width=\linewidth,height=.7\linewidth]{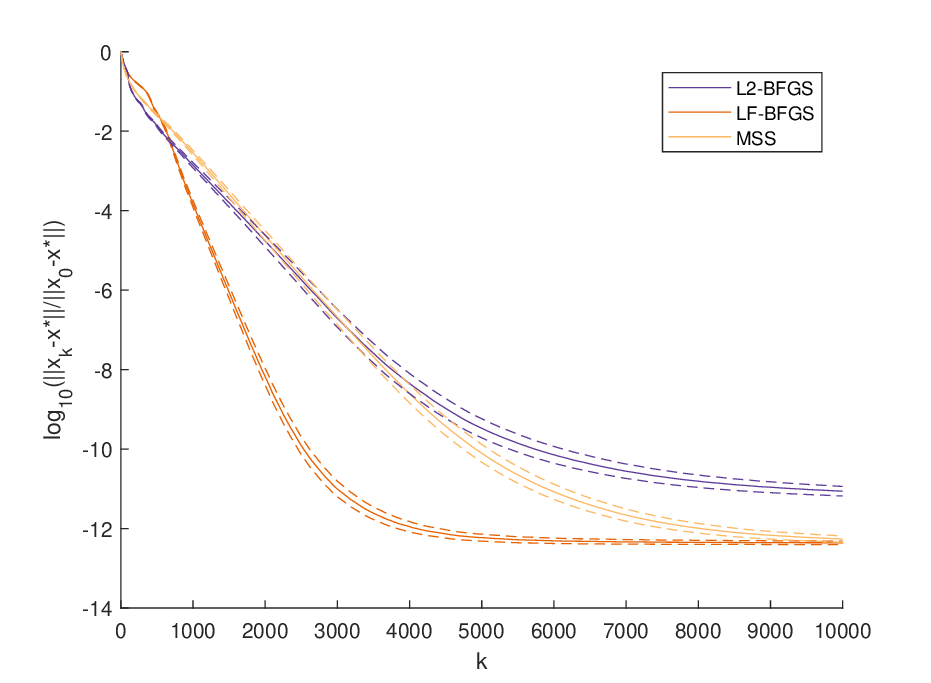}
  \caption{m=16}
  \label{fig:Monte_Carlo_QP_lognormal_m16}
\end{subfigure}
\caption{Results for the randomly generated QPs. The average 10-logarithm of the normalized Euclidean distance to the optimum as a function of iteration number k, for each of the three algorithms, with $\pm 3 \sigma$ confidence intervals.}
\label{fig:Agg_consecutive}
\end{figure}

\subsection{CUTEst test problems}

The final test consists of a battery of test-problems for unconstrained optimization from the CUTEst test suite \cite{gould2015cutest}, each defined by an objective function to be minimized and a starting point. The purpose of this test is to run our algorithms on a well-known collection of benchmark problems, and to compare our results with a test with MSS performed in a previous study. We replicate the tests performed in \cite{MSS} with our algorithms L2-BFGS and LF-BFGS, and count the number of function evaluations needed to solve each problem. To get consistency with the results reported in~\cite{MSS}, we use the same termination criterion as in that study: a problem is considered "solved" whenever the norm of its gradient falls below either $10^{-5}$, $10^{-6}$ times its initial value, or $10^{-6}$ times the initial function value. For further benchmarking, we take note of that L-BFGS is more commonly implemented as a line-search algorithm, and implement the conventional L-BFGS-method using the matrix representations detailed in \cite{Byrd_representations}, combining it with a line search algorithm for satisfying the strong Wolfe-conditions. By considering the performance in terms of function evaluations, we obtain a fair comparison between trust-region algorithms, which require one function evaluation per iteration, and line search algorithms, which usually require several. The selected values of $m$ were $5$ for L-BFGS with line search, $2.5$ (i.e. 5 stored eigenvectors) for L2-BFGS and LF-BFGS. A comparison is also made with the results of the test runs of MSS in \cite{MSS}, which used $m=5$.
\begin{table}
    \centering
    \begin{tabular}{|c|c|c|c|c|} \hline
      Problem name  &LF-BFGS & L2-BFGS & MSS & L-BFGS (Line search) \\ \hline
         ARWHEAD & \bf{12} & \bf{12} &15 & 35\\ \hline
         BDQRTIC &111 &78 & \bf{40} & 133\\ \hline
         BROYDN7D &\bf{940} &987 &1566 & 5000*\\ \hline
         BRYBND &36 &\bf{34} &59 & 131\\ \hline
         CHAINWOO &41 &\bf{40} &54 & 58\\ \hline
         COSINE &430 &64 & \bf{14} & 29\\ \hline
         CRAGGLVY &123 &72 & \bf{36} & 74\\ \hline
         DIXMAANA &22 &22 & \bf{13} & 22\\ \hline
         DIXMAANB &17 &17 &\bf{13} & 15\\ \hline
         DIXMAANC &21 &21 & \bf{14} & 21\\ \hline
         DIXMAAND &26 &26 & \bf{15} & 16\\ \hline
         DIXMAANE &258 &80 & \bf{54} & 73\\ \hline
         DIXMAANF &108 &60 & \bf{24} & 49\\ \hline
         DIXMAANG &66 &57 & \bf{21} & 39\\ \hline
         DIXMAANH &46 &46 & \bf{20} & 27\\ \hline
         DIXMAANI &388 &114 & \bf{84} & 125\\ \hline
         DIXMAANJ &125 &65 & \bf{28} & 53\\ \hline
         DIXMAANK &94 &62 & \bf{24} & 44\\ \hline
         DIXMAANL &48 &48 & \bf{22} & 29\\ \hline
         DQDRTIC &11 & 11 &11 & \bf{7} \\ \hline
         DQRTIC &\bf{20} &\bf{20} &29 & 21\\ \hline
         EDENSCH &40 &40 & \bf{22} & 26\\ \hline
         EG2 &\bf{4} &\bf{4} &5 & 36\\ \hline
         ENGVAL1 &23 &23 & \bf{17} & 25\\ \hline
         EXTROSNB &89 &70 & \bf{39} & 114\\ \hline
         FMINSRF2 &553 &\bf{463} &569 & 457\\ \hline
         FMINSURF &913 &668 &\bf{241} & 804\\ \hline
    \end{tabular}
    \caption{Number of function evaluations for each of the compared algorithms. MSS function evaluations taken from \cite{MSS}.}
    \label{tab:fcn_table}
\end{table}

\begin{table}
    \centering
    \begin{tabular}{|c|c|c|c|c|} \hline
     Problem name  &LF-BFGS & L2-BFGS & MSS & L-BFGS (Line search) \\ \hline
        FREUROTH &701 &61 & \bf{36} & 52\\ \hline
         LIARWHD &\bf{24} &\bf{24} &30 & 60\\ \hline
         MOREBV &\bf{65} &69 &261 & 88\\ \hline
         NCB20 &933 & \bf{233} &772 & 1324\\ \hline
         NCB20B &56 & \bf{37} &49 & 86\\ \hline
         NONCVXU2 &19 &19 &19 & \bf{5}\\ \hline
         NONCVXUN &19 &19 &19 & \bf{7}\\ \hline
         NONDIA &\bf{3} &\bf{3} &4 & 17\\ \hline
         NONDQUAR &1567 &125 & \bf{47} & 85 \\ \hline
         PENALTY1 &\bf{19} & \bf{19} &25 & 22\\ \hline
         POWELLSG &36 &36 & \bf{29} & 46\\ \hline
         POWER &103 &97 & \bf{37} & 47\\ \hline
         QUARTC &\bf{20} &\bf{20} &29 & 21\\ \hline
         SCHMVETT &100 &74 & \bf{46} & 69\\ \hline
         SINQUAD &39 &39 & \bf{36} & 106\\ \hline
         SPARSINE &125 &122 & \bf{115} & 1629\\ \hline
         SPARSQUR &57 &57 & \bf{17} & 29\\ \hline
         SPMSRTLS &885 &135 & \bf{114} & 211\\ \hline
         SROSENBR & \bf{20} & \bf{20} &22 & 51\\ \hline
         TESTQUAD &107 &84 & \bf{72} & 142\\ \hline
         TOINTGSS &12 &12 &11 & \bf{3}\\ \hline
         TQUARTIC &\bf{40} &\bf{40} &47 & 82\\ \hline
         TRIDIA &817 &269 &\bf{263} & 487\\ \hline
         VARDIM &17 &17 & \bf{12} & 43\\ \hline
         VAREIGVL &49 &43 & \bf{31} & 32\\ \hline
         WOODS &33 &33 & \bf{22} & 44\\ \hline
    \end{tabular}
    \caption{Number of function evaluations for each of the compared algorithms. MSS function evaluations taken from \cite{MSS}.}
    \label{tab:fcn_table_2}
\end{table}
         
As seen in tables \ref{tab:fcn_table} and \ref{tab:fcn_table_2}, none of the compared algorithms has a unilaterally superior performance across all the test problems. Figure~\ref{fig:performance_profile} shows a performance profile for function evaluations, similar to the one found in~\cite{MSS}. For the set ${\mathcal P}$ of test problems under consideration, $r_{s,p}$ is the ratio of the function evaluations required to solve problem $p$ with method $s$ and the minimal number of function evaluations required to solve problem $p$ using any of the methods of consideration. Defining
\begin{align*}
    \pi_s(\tau) = \dfrac{1}{\text{card}(\mathcal{P})}\text{card}(\{p\in \mathcal{P}:\log_2(r_{s,p}) \leq \tau\}),
\end{align*}
\begin{figure}
    \centering
    \includegraphics[width=0.6\textwidth]{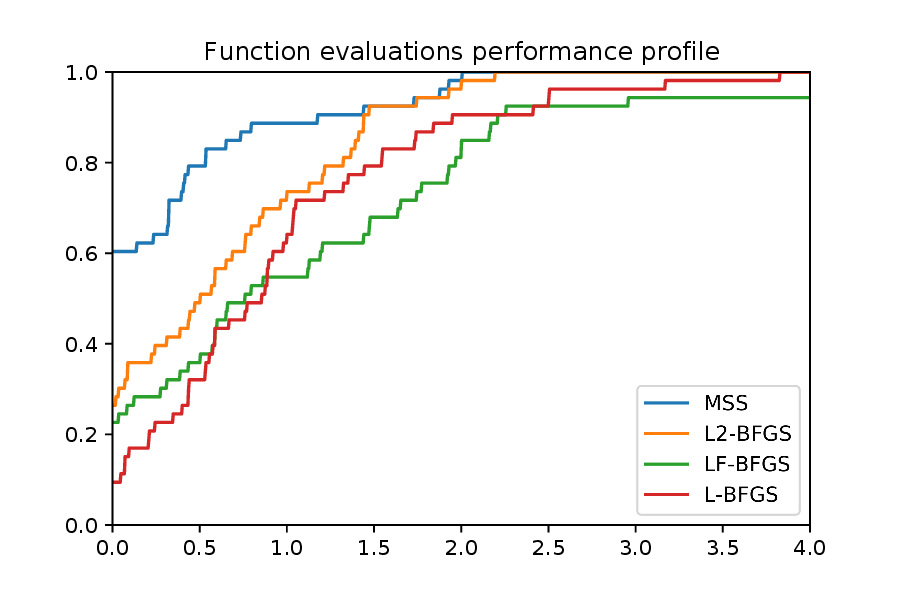}
    \caption{Performance profile for the L2-BFGS, LF-BFGS, MSS and L-BFGS methods.}
    \label{fig:performance_profile}
\end{figure}
Figure \ref{fig:performance_profile} plots $\pi_s(\tau)$ for $\tau \in [0,4]$. Although L2-BFGS has the best performance in $26.4\%$ of the problem instances in the test batch, the performance profile indicates that MSS seemed to perform the best overall. In particular, the other algorithms struggled with the DIXMAANA-DIXMAANL problems in comparison. However, the figure also shows that L2-BFGS had an overall better performance that L-BFGS with line search, while the performance of LF-BFGS was comparable. 

Since all algorithms tested were based on the BFGS update, their difference in performance was caused by other aspects of the algorithm. Worth noting is that both MSS and L-BFGS with line search used the strategy of dropping old curvature information to handle the memory limit, but obtained widely different results. All algorithms also enforced positive definiteness of their Hessian approximations: The line-search algorithm by ensuring that the Wolfe condition was satisfied, and the trust-region algorithms by skipping updates when $y_k^T s_k < \varepsilon \|s_k|_2 \|y_k\|_2$. 

Besides the number of function evaluations, one can consider the runtime of the compared algorithms. In this regard, we note that the CUTEst data set is not the ideal battery test to evaluate this for LF-BFGS and L2-BFGS, as function and gradient evaluations are not the main bottleneck when solving these optimization problems. The line search implementation of L-BFGS, with matrix representations as detailed in \cite{Byrd_representations} is made to be very efficient per iteration, avoiding as many operations with $n$-dimensional vectors as possible. Our implementations of L2-BFGS and L2-BFGS perform more such computations, and the test batch does not have the long function evaluation times that allow us to perform the matrix reduction in parallel while otherwise waiting for the gradient computations to return. As a result, their runtimes for the CUTEst problems are sometimes significantly longer. Thus, in practice, although LF-BFGS was shown to outperform L-BFGS with line search in terms of function evaluations, its runtime would only be shorter than L-BFGS for problems where function evaluations are the main bottleneck, such as when the objective function is defined by large amounts of data.

\section{Conclusions and further work}

In this paper, we have derived analytical solutions to matrix optimization problems where the objective function is either a unitarily invariant matrix norm, or based on the Stein-loss. From these results, we have derived new limited-memory versions of quasi-Newton methods. These limited memory algorithms make use of a low-rank plus shift structure of their Hessian approximations, which we characterize as a constraint on the eigenvalues of the matrix. We outlined how to leverage this matrix structure to construct efficient trust-region methods and demonstrated competitive performance for trust-region implementations of our type of limited memory adaption of BFGS, compared to MSS and conventional L-BFGS. 

While we took a broad view when deriving solutions to matrix optimization problems, and showed how they could be used to derive limited memory versions of every quasi-Newton method in the Broyden class, our numerical evaluations were limited in scope. Therefore, there is much room for further development. A more comprehensive investigation of which matrix optimization problems and which Broyden-class methods would combine well together is warranted. In particular, we suspect that a trust-region implementation of SR1 with matrix reduction based on the 2-norm could outperform the methods implemented here for the non-convex problems in the CUTEst collection. Another direction for future work is to further investigate different ways of handle the memory limits of the method. This includes a more systematic investigation of how the choice of memory parameter affects performance, as well as whether performing matrix reductions with a different frequency than once per iteration could yield better results. Finally, it would be interesting to look into how these quasi-Newton methods would perform in a stochastic setting, as their way of aggregating curvature information could prove advantageous there.

\bibliographystyle{ieeetr}
\renewcommand{\bibname}{References}
\bibliography{main} 
\appendix

\section{Appendix}

\subsection{Results on matrix distance measures}\label{sec:matrixdistance}

This section presents results used to derive the theorems in Section~\ref{sec:NLMMP}
\begin{theorem}[{{\cite[page 467]{matrix}}}]\label{thm:Unitary matrix norms}
Let $m$ and $n$ be arbitrary positive integers. Let $A$ and $B$ be arbitrary $m \times n$ matrices, and let, let $V_1 \Sigma (A) W_1^*$ and $V_2 \Sigma(B) W_2$ be singular value decompositions of them. Then, for any unitarily invariant norm $\|\cdot\|$ on the set of $m \times n$ matrices, $\|A - B\| \geq \|\Sigma(A) - \Sigma(B)\|$.
\end{theorem}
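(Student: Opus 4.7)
The plan is to reduce this matrix inequality to a vector majorization inequality via the classical structure theorem for unitarily invariant norms. First, I would invoke von Neumann's characterization: every unitarily invariant norm $\|\cdot\|$ on $m \times n$ matrices admits a representation $\|M\| = \Phi(\sigma(M))$, where $\sigma(M) = (\sigma_1(M), \ldots, \sigma_{\min(m,n)}(M))$ is the vector of singular values arranged in nonincreasing order and $\Phi$ is a symmetric gauge function on $\mathbb{R}^{\min(m,n)}$, i.e., a norm invariant under coordinate permutations and sign changes. Applied to the two sides of the desired inequality, this rewrites them as $\|A - B\| = \Phi(\sigma(A - B))$ and $\|\Sigma(A) - \Sigma(B)\| = \Phi(\sigma(A) - \sigma(B))$; the latter holds because $\Sigma(A) - \Sigma(B)$ is a diagonal matrix with entries $\sigma_i(A) - \sigma_i(B)$ and $\Phi$ is insensitive to their signs and order.

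Second, I would invoke Mirsky's singular-value perturbation inequality, which states that
\begin{equation*}
    \sum_{i=1}^{k} |\sigma_i(A) - \sigma_i(B)|^{\downarrow} \le \sum_{i=1}^{k} \sigma_i(A - B), \qquad k = 1, \ldots, \min(m,n),
\end{equation*}
where $|\cdot|^{\downarrow}$ denotes the rearrangement of absolute values in nonincreasing order. In the language of majorization, $|\sigma(A) - \sigma(B)|$ is weakly majorized by $\sigma(A - B)$. Combining this with the Ky Fan dominance principle — that every symmetric gauge function is monotone with respect to weak majorization of absolute values — immediately yields $\Phi(\sigma(A) - \sigma(B)) \le \Phi(\sigma(A - B))$, which is precisely the bound claimed.

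The main obstacle is Mirsky's inequality itself. The standard route is to pass from singular values to Hermitian eigenvalues via the Jordan–Wielandt dilation $\widetilde{M} = \bigl[\begin{smallmatrix} 0 & M \\ M^{*} & 0 \end{smallmatrix}\bigr]$, whose nonzero eigenvalues are $\pm\sigma_i(M)$, and then applying the Lidskii–Wielandt perturbation theorem for eigenvalues of Hermitian matrices to the pair $\widetilde{A}, \widetilde{B}$. Since the statement of the theorem is quoted directly from a standard reference, I would not reproduce this derivation and would simply cite it, with the von Neumann plus Ky Fan dominance steps above serving as the short glue that converts the cited fact into the desired norm inequality.
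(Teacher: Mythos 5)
Your outline is correct. Note, however, that the paper does not prove this statement at all: it is quoted verbatim from the reference \cite[page 467]{matrix} (it is Mirsky's theorem), so there is no in-paper argument to compare against. Your route --- von Neumann's representation of a unitarily invariant norm as a symmetric gauge function $\Phi$ of the singular values, the observation that $\|\Sigma(A)-\Sigma(B)\| = \Phi(\sigma(A)-\sigma(B))$ because the entries of $\Sigma(A)-\Sigma(B)$ are $\sigma_i(A)-\sigma_i(B)$ up to sign and order, Mirsky's weak-majorization inequality for $|\sigma(A)-\sigma(B)|$ against $\sigma(A-B)$ obtained via the Jordan--Wielandt dilation and Lidskii--Wielandt, and finally Ky Fan dominance --- is exactly the standard textbook derivation, and each of the three classical ingredients is correctly invoked and correctly glued together. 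The only implicit assumption worth making explicit is that $\Sigma(A)$ and $\Sigma(B)$ list the singular values in the same (nonincreasing) order, which is the usual SVD convention and is what makes the identification of the diagonal entries of $\Sigma(A)-\Sigma(B)$ with $\sigma_i(A)-\sigma_i(B)$ valid; with that understood, the argument is complete modulo the cited classical facts, which is an appropriate level of detail for a result the paper itself only cites.
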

Theorem \ref{thm:Unitary matrix norms} has special implications for Hermitian matrices, for which the singular values are the absolute values of the eigenvalues. Corollary \ref{cor:Mirsky} states those implications.
\begin{corollary}[{{\cite[page 468]{matrix}}}]\label{cor:Mirsky}
    Let $A,B$ be Hermitian $n \times n$ matrices and let $\|\cdot\|$ be a unitarily invariant norm on the set of $n \times n$ matrices. Then, 
    \begin{equation*}
        \|\diag (\lambda^{\downarrow}(A)) - \diag (\lambda^{\downarrow}(B)) \| \leq \|A - B\|.
    \end{equation*}
\end{corollary}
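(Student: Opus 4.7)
The plan is to reduce the statement to Theorem \ref{thm:Unitary matrix norms} by shifting both $A$ and $B$ by a sufficiently large multiple of the identity so that they become positive semidefinite, at which point singular values coincide with eigenvalues and the ordering by magnitude agrees with the ordering by algebraic value.

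First I would choose a scalar $c \geq \max(|\lambda_{\min}(A)|, |\lambda_{\min}(B)|)$, so that both $A + cI$ and $B + cI$ are positive semidefinite Hermitian. The crucial (trivial) identity is $A - B = (A + cI) - (B + cI)$, so $\|A - B\| = \|(A + cI) - (B + cI)\|$ for any norm. Applying Theorem \ref{thm:Unitary matrix norms} to the shifted pair then gives
\begin{equation*}
\|A - B\| \;=\; \|(A + cI) - (B + cI)\| \;\geq\; \|\Sigma(A + cI) - \Sigma(B + cI)\|.
\end{equation*}

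Next I would translate the singular values back into eigenvalues. Because $A + cI$ and $B + cI$ are positive semidefinite, their singular values equal their eigenvalues, and arranging in descending order gives $\Sigma(A + cI) = \diag(\lambda^{\downarrow}(A)) + cI$ and similarly for $B$. The two $cI$ terms cancel when we subtract, yielding
\begin{equation*}
\|\Sigma(A + cI) - \Sigma(B + cI)\| \;=\; \|\diag(\lambda^{\downarrow}(A)) - \diag(\lambda^{\downarrow}(B))\|,
\end{equation*}
which combined with the previous inequality finishes the proof.

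The only subtle point—and the main obstacle one must be careful about—is that without the positive semidefinite shift, Theorem \ref{thm:Unitary matrix norms} would give an inequality involving $|\lambda_i|$ reordered in decreasing magnitude rather than the signed eigenvalues reordered algebraically; these two orderings can differ for indefinite Hermitian matrices, and the resulting bound would not match the desired right-hand side. Shifting by $cI$ eliminates this discrepancy because a monotone affine transform of the eigenvalues preserves the descending order, and the identity $A - B = (A+cI) - (B+cI)$ leaves the left-hand side unchanged.
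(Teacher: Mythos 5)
Your proof is correct, and it is worth noting that the paper does not actually prove this corollary: it cites the textbook and offers only the one-line gloss that ``the singular values \emph{[of Hermitian matrices]} are the absolute values of the eigenvalues.'' Taken literally, that gloss yields only the weaker inequality $\|\diag(|\lambda|^{\downarrow}(A)) - \diag(|\lambda|^{\downarrow}(B))\| \leq \|A-B\|$, where the absolute values are sorted in decreasing magnitude; for indefinite Hermitian matrices this is genuinely weaker than the stated corollary (e.g.\ for $A$ with spectrum $\{2,-1\}$ and $B$ with spectrum $\{1,-2\}$ the magnitude-sorted difference vanishes while the algebraically sorted difference does not). Your shift argument is exactly the device needed to close that gap: replacing $A,B$ by $A+cI,B+cI$ with $c$ large enough makes both matrices positive semidefinite, so singular values coincide with algebraically ordered eigenvalues, the shift cancels on both sides of the inequality, and Theorem~\ref{thm:Unitary matrix norms} delivers the signed-eigenvalue bound directly. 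Each step checks out (in particular $\Sigma(A+cI)=\diag(\lambda^{\downarrow}(A))+cI$ because adding a constant preserves the descending order), so you have given a complete, self-contained derivation of Mirsky's inequality from the singular-value theorem, which is more than the paper itself supplies.
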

From Corollary \ref{cor:Mirsky}, an important lemma for solving certain nearest limited memory matrix problems will be derived.

\begin{lemma}\label{lemma:ABC}
    For all Hermitian matrices $A$ and $B$ of the same size and any unitarily invariant matrix norm $\|\cdot\|$, there is a Hermitian matrix $C$ with the same eigenvectors as $A$ and the same eigenvalues as $B$ such that $\|A - C\| \leq \|A - B\|$.  
\end{lemma}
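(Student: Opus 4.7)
The plan is to construct $C$ explicitly from the spectral decomposition of $A$ by grafting the eigenvalues of $B$ onto the eigenvectors of $A$, and then to apply Corollary~\ref{cor:Mirsky} (Mirsky's inequality) to bound $\|A - C\|$.

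First I would write $A = U_A \Lambda_A U_A^*$ with $\Lambda_A = \diag(\lambda^{\downarrow}(A))$, so that the columns of $U_A$ are orthonormal eigenvectors of $A$ ordered to match the decreasing ordering of eigenvalues. I would then define
\begin{equation*}
C := U_A \, \diag(\lambda^{\downarrow}(B)) \, U_A^*.
\end{equation*}
By construction, $C$ is Hermitian, its eigenvectors (the columns of $U_A$) coincide with those of $A$, and its eigenvalues are exactly those of $B$, so $C$ meets the two structural requirements in the statement.

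Next I would compute the norm. Since $\|\cdot\|$ is unitarily invariant and $U_A$ is unitary,
\begin{equation*}
\|A - C\| = \bigl\| U_A \bigl(\Lambda_A - \diag(\lambda^{\downarrow}(B))\bigr) U_A^* \bigr\| = \bigl\| \diag(\lambda^{\downarrow}(A)) - \diag(\lambda^{\downarrow}(B)) \bigr\|.
\end{equation*}
Corollary~\ref{cor:Mirsky} gives precisely $\bigl\| \diag(\lambda^{\downarrow}(A)) - \diag(\lambda^{\downarrow}(B)) \bigr\| \leq \|A - B\|$, which completes the argument.

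There is essentially no hard part here once Corollary~\ref{cor:Mirsky} is in hand: the only subtlety is recognizing that the same diagonal ordering must be used for both $A$ and $B$ when defining $C$, so that the resulting diagonal matrix appearing in $\|A - C\|$ matches the one in the Mirsky bound. Had we paired $\lambda^{\downarrow}(A)$ with some other ordering of the eigenvalues of $B$, the inequality would fail in general, so the ordering choice is the single step that requires care.
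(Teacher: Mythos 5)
Your construction $C = U_A \,\diag(\lambda^{\downarrow}(B))\, U_A^*$ followed by unitary invariance and Corollary~\ref{cor:Mirsky} is exactly the paper's own argument, and your remark about matching the decreasing orderings is the right point of care. The proof is correct and takes essentially the same approach as the paper.
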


\begin{proof}
    Since $A$ and $B$ are Hermitian, they can be eigendecomposed as $A = U\diag( \lambda^{\downarrow}(A)) U^* $ and $B = V \diag (\lambda^{\downarrow}(B)) V^*$, where $U$ and $V$ are unitary matrices. Setting $C = U \diag (\lambda^{\downarrow}(B)) U^*$, and making use of unitary invariance of $\|\cdot \|$,
    \begin{equation*}
    \begin{aligned}
        \|A - C\| = \|U \left (\diag (\lambda^{\downarrow}(A)) - \diag (\lambda^{\downarrow}(B)) \right) U^* \| = \|\diag (\lambda^{\downarrow}(A)) - \diag (\lambda^{\downarrow}(B))\|.
    \end{aligned}
    \end{equation*}
    By Corollary \ref{cor:Mirsky}, it follows that $\|A-C\| \leq \|A - B\|$. 
\end{proof}

Another useful result for solving certain nearest limited memory matrix problems is Lemma~\ref{lemma:logdetineq}, which can be derived from the following theorem:

\begin{theorem} [{{\cite[page 255]{matrix}}}]\label{thm:trace_ineq}
    Let $A,B$ be Hermitian $n \times n$ matrices. Then,
    \begin{equation*}
        \sum_{i=1}^n \lambda_i(A) \lambda_{n+1-i}(B) \leq \tr(AB) \leq \sum_{i=1}^n \lambda_i(A) \lambda_{i}(B).
    \end{equation*}
\end{theorem}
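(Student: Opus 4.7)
The plan is to reduce the inequalities to the classical rearrangement inequality by exploiting the spectral theorem together with the Birkhoff--von Neumann theorem on doubly stochastic matrices. First, I would use the spectral theorem to write $A = U D_A U^*$ and $B = V D_B V^*$ with $D_A = \diag(\lambda_1(A), \ldots, \lambda_n(A))$ and $D_B = \diag(\lambda_1(B), \ldots, \lambda_n(B))$ where both sequences are taken in nonincreasing order. By the cyclic property of the trace, $\tr(AB) = \tr(D_A W D_B W^*)$ with $W = U^* V$ unitary. Expanding the diagonal of $W D_B W^*$ yields the key identity
$$\tr(AB) = \sum_{i,j=1}^n |w_{ij}|^2\, \lambda_i(A)\, \lambda_j(B).$$

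Next, I would observe that the matrix $P$ defined by $P_{ij} := |w_{ij}|^2$ is doubly stochastic: the rows and columns of $W$ are orthonormal, so all row and column sums of $P$ equal $1$. By the Birkhoff--von Neumann theorem, $P$ can be written as a convex combination of permutation matrices, $P = \sum_k c_k P_{\sigma_k}$ with $c_k \geq 0$ and $\sum_k c_k = 1$. Substituting this into the identity above,
$$\tr(AB) = \sum_k c_k \sum_{i=1}^n \lambda_i(A)\, \lambda_{\sigma_k(i)}(B),$$
so $\tr(AB)$ is expressed as a convex combination of the ``shuffled'' sums $\sum_i \lambda_i(A)\lambda_{\sigma(i)}(B)$.

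Finally, the classical rearrangement inequality gives that, for two sequences sorted in nonincreasing order and any permutation $\sigma$,
$$\sum_{i=1}^n \lambda_i(A)\lambda_{n+1-i}(B) \;\leq\; \sum_{i=1}^n \lambda_i(A)\lambda_{\sigma(i)}(B) \;\leq\; \sum_{i=1}^n \lambda_i(A)\lambda_i(B).$$
Since the extreme bounds are independent of $\sigma$, taking the convex combination over $k$ preserves them and yields exactly the two-sided bound on $\tr(AB)$. The main obstacle is the appeal to Birkhoff's theorem: one either quotes it or, for a self-contained argument, replaces this step with Schur's majorization theorem applied to the diagonal of $W D_B W^*$ combined with a summation-by-parts/Abel-type manipulation, which delivers the same inequalities without explicitly invoking the extreme points of the set of doubly stochastic matrices.
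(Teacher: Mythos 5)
Your proof is correct. Note that the paper does not prove this statement at all: it is quoted as Theorem~\ref{thm:trace_ineq} directly from Horn and Johnson's \emph{Matrix Analysis}, so there is no in-paper argument to compare against. Your argument is the standard textbook proof of this trace inequality, and every step checks out: the identity $\tr(AB)=\sum_{i,j}|w_{ij}|^2\lambda_i(A)\lambda_j(B)$ follows from the spectral theorem and cyclicity of the trace, the matrix $(|w_{ij}|^2)$ is doubly stochastic because the rows and columns of a unitary matrix have unit norm, Birkhoff--von Neumann expresses $\tr(AB)$ as a convex combination of permuted products, and the rearrangement inequality bounds each such product by the similarly-ordered and oppositely-ordered sums (your convention of sorting both eigenvalue sequences in the same order is exactly what the two-sided bound requires). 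The alternative you sketch---Schur majorization of the diagonal of $WD_BW^*$ by the eigenvalues of $B$, followed by an Abel-summation argument---is also a valid way to avoid invoking the extreme-point structure of the doubly stochastic polytope.
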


\begin{lemma}\label{lemma:logdetineq}
    For all Hermitian matrices $A$ and $B$ of the same size, there is a Hermitian matrix $C$ with the same eigenvectors as $A$ and the same eigenvalues as $B$ such that 
    \begin{align*}
     \tr(B^{-1}A) - \log \det (B^{-1}A) &\geq \tr (C^{-1} A) - \log \det (C^{-1}A),\\
     \tr(BA^{-1}) - \log \det (BA^{-1}) &\geq \tr (C A^{-1}) - \log \det (CA^{-1}), \mbox{ and }\\
      \tr(B^{-1}A) + \tr(BA^{-1})  &\geq \tr (C^{-1} A) + \tr (CA^{-1}),
    \end{align*}
\end{lemma}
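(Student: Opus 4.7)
The plan is to construct $C$ explicitly and then reduce all three inequalities to applications of Theorem~\ref{thm:trace_ineq}. Let $A = U \diag(\lambda^{\downarrow}(A)) U^*$ be the spectral decomposition of $A$ with eigenvalues listed in decreasing order, and set $C = U \diag(\lambda^{\downarrow}(B)) U^*$. By construction, $C$ is Hermitian, shares the eigenvectors of $A$, and has the same multiset of eigenvalues as $B$, so it meets the qualifying conditions of the lemma.

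The first reduction I would make is to observe that the $\log\det$ terms cancel on both sides of (i) and (ii): since $\det$ depends only on the eigenvalues, $\det(C) = \det(B)$, so $\det(C^{-1}A) = \det(B^{-1}A)$ and $\det(CA^{-1}) = \det(BA^{-1})$. Thus (i) and (ii) reduce respectively to $\tr(B^{-1}A) \geq \tr(C^{-1}A)$ and $\tr(BA^{-1}) \geq \tr(CA^{-1})$, and inequality (iii) follows by adding these two.

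Next I would compute the two traces involving $C$ directly from the spectral decomposition. Since $C^{-1} = U \diag\bigl(1/\lambda_i^{\downarrow}(B)\bigr)U^*$ and the reciprocals of the decreasing eigenvalues of $B$ are exactly the increasing eigenvalues of $B^{-1}$,
\begin{equation*}
    \tr(C^{-1}A) = \sum_{i=1}^n \lambda_i^{\downarrow}(A)\,\lambda_i^{\uparrow}(B^{-1}),
    \qquad
    \tr(CA^{-1}) = \sum_{i=1}^n \lambda_i^{\downarrow}(B)\,\lambda_i^{\uparrow}(A^{-1}).
\end{equation*}
These are precisely the sums that appear on the left-hand side of Theorem~\ref{thm:trace_ineq} (with $\lambda_{n+1-i}$ of one matrix being $\lambda_i$ of that matrix in the opposite order). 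Applying that theorem to the pairs $(A, B^{-1})$ and $(B, A^{-1})$ gives $\tr(B^{-1}A) \geq \tr(C^{-1}A)$ and $\tr(BA^{-1}) \geq \tr(CA^{-1})$, which close out all three inequalities.

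The only genuine subtlety, rather than an obstacle, is to pair the orderings consistently when building $C$: listing both $\lambda^{\downarrow}(A)$ and $\lambda^{\downarrow}(B)$ in the same (decreasing) order ensures that after inversion the trace sums align with the worst-case rearrangement appearing in the lower bound of Theorem~\ref{thm:trace_ineq}. Had the eigenvalues of $B$ been placed in increasing order along the diagonal, one would instead hit the upper bound of that theorem and the desired inequality would fail. Invertibility of $B$ (and positive definiteness in the intended applications) is used implicitly so that the $\log\det$ and inverse expressions are well defined, but plays no further role in the argument.
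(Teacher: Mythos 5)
Your proof is correct and follows essentially the same route as the paper: the same construction $C = U\diag(\lambda^{\downarrow}(B))U^*$, the same observation that the $\log\det$ terms cancel because $B$ and $C$ share eigenvalues, and the same application of the lower bound in Theorem~\ref{thm:trace_ineq} to the pairs $(A,B^{-1})$ and $(B,A^{-1})$. Your remark about the ordering being the source of the correct (lower-bound) pairing is a nice clarification but does not change the argument.
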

\begin{proof}
    Since the determinant of a product of two matrices is equal to the product of eigenvalues of those matrices, and since $B$ and $C$ have the same eigenvalues, $\log\det(B^{-1}A) = \log \det(C^{-1}A)$ and $\log\det(BA^{-1}) = \log \det(CA^{-1})$, so we focus on the trace terms. Let us begin by showing that $\tr(B^{-1}A) \geq \tr(C^{-1}A).$ Since $A$ is Hermitian, it can be eigendecomposed as $U\diag(\lambda^{\downarrow}(A))U^*$. Thus, setting $C = U \diag(\lambda^{\downarrow}(B))U^*$ gives that
    \begin{equation*}
    \begin{aligned}
        & \tr(C^{-1}A) = \tr(U\diag(\lambda^{\downarrow}(B))^{-1}\diag(\lambda^{\downarrow}(A))U^*) \\
        & = \sum_{i=1}^n \lambda_{i}(B^{-1}) \lambda_{n+1-i}(A) \leq \tr(B^{-1}A).
    \end{aligned}
    \end{equation*}
    where the last inequality follows from Theorem~\ref{thm:trace_ineq}. Similarly,
    \begin{equation*}
    \begin{aligned}
        & \tr(CA^{-1}) = \tr(U\diag(\lambda^{\downarrow}(B))\diag(\lambda^{\downarrow}(A))^{-1}U^*) \\
        & = \sum_{i=1}^n \lambda_{i}(B) \lambda_{n+1-i}(A^{-1}) \leq \tr(BA^{-1}).
    \end{aligned}
    \end{equation*}
    and therefore
    \begin{equation*}
    \begin{aligned}
        & \tr(C^{-1}A) + \tr(CA^{-1}) \leq  \tr(B^{-1}A) + \tr(BA^{-1}).
    \end{aligned}
    \end{equation*}
    Combining these results leads to the desired conclusion.
\end{proof}
Note that if $A$ is a real symmetric matrix, the matrix $C$ as constructed in the proofs of lemmas \ref{lemma:ABC} and \ref{lemma:logdetineq} will be real and symmetric. 

\subsection{Eigenvalue decomposition of limited memory matrices}\label{sec:Eigendecomposition}

To obtain orthonormal eigenvectors for 
\begin{equation}\label{eqn:B_hat}
    \widehat B_{k+1} = \alpha_k + \widehat U_{k+1} \widehat C_{k+1} \widehat U_{k+1}^T,
\end{equation}
we utilize QR-decomposition. Similarly to what was done in \cite{Combining_TR_and_LQN}, We consider the Gram matrix $G_{k+1} := \widehat U_{k+1}^T \widehat U_{k+1}.$ An LDLT-factorization of $G_{k+1}$ with pivoting yields a permutation matrix $P_{k+1}$, a lower triangular matrix $L_{k+1}$ and a diagonal matrix $D_{k+1}$ such that $P_{k+1}^T G_{k+1} P_{k+1} = L_{k+1} D_{k+1} L_{k+1}^T$. 
With the QR-decomposition ansatz $ \widehat U_{k+1} P_{k+1} = Q_{k+1} R_{k+1}$, $P_{k+1}^T G_{k+1} P_{k+1} = R_{k+1}^T R_{k+1} = L_{k+1} D_{k+1} L_{k+1}^T$, which implies that $R_k=\sqrt{D_k}L_k^T$. Consider first the case when $R_{k+1}$ has full rank. The QR decomposition is then unique and $Q_{k+1}$ can be computed as $Q_{k+1} = \widehat U_{k+1} P_{k+1} L_{k+1}^{-T} \sqrt{D_{k+1}}^{-1}$. With $Q_{k+1}$ computed, consider Equation~\eqref{eqn:B_hat} again. Inserting $\widehat U_{k+1} = Q_{k+1} \sqrt{D_{k+1}} L_{k+1}^T P_{k+1}^T$ gives 
\begin{equation}\label{eqn:update_LDP}
    \widehat B_{k+1} = \alpha_k I + Q_{k+1} \sqrt{D_{k+1}} L_{k+1}^T P_{k+1}^T \widehat C_{k+1} P_{k+1} L_{k+1} \sqrt{D_{k+1}} Q_{k+1}^T. 
\end{equation}
Performing an eigenvalue decomposition of \\ $\sqrt{D_{k+1}} L_{k+1}^T P_{k+1}^T \widehat C_{k+1} P_{k+1} L_{k+1} \sqrt{D_{k+1}}$ yields 
\begin{equation*}
    \sqrt{D_{k+1}} L_{k+1}^T P_{k+1}^T \widehat C_{k+1} P_{k+1} L_{k+1} \sqrt{D_{k+1}} = V_{k+1} \Lambda_{k+1} V_{k+1}^T,
\end{equation*}
where $V_{k+1}$ is an orthonormal and $\Lambda_{k+1}$ a diagonal matrix. Since $Q_{k+1}$ is also an orthonormal matrix, $ E_{k+1} = V_{k+1} Q_{k+1}$ is orthonormal, and the columns of $ E_{k+1}$ can be identified as eigenvectors of $\widehat B_{k+1}$, while the diagonal elements of $\Lambda_{k+1}$ can be identified as the eigenvalues of $\widehat B_{k+1}$ subtracted by $\alpha_k$. This can be seen from
\begin{equation}\label{eqn:new_eigen}
    \widehat B_{k+1} = \alpha_k I + E_{k+1} \Lambda_{k+1} E_{k+1}^T.
\end{equation}
 Even though $G_{k+1}$ should be positive semidefinite, errors due to finite precision arithmetic may cause the diagonal elements of $D_{k+1}$ to be negative. Also, in the case where $R_{k+1}$ does not have full rank in exact arithmetic, numerical rounding errors may still render its diagonal elements nonzero.
 To detect and handle these situations, the following scheme is implemented: If any of the diagonal elements of $D_{k+1}$ are below a given threshold $\nu$, they are considered to be zero. Let $D_{k+1}^{\dagger}$ be the matrix obtained by deleting the rows and columns of $D_{k+1}$ where there are diagonal elements below the threshold. From $Q_{k+1} \sqrt{D_{k+1}} = U_{k+1} P_{k+1} L_{k+1}^{-T}$, it can be seen that the range of $U_{k+1}$ is spanned by those columns of $Q_{k+1}$ that correspond to nonzero diagonal elements of $D_{k+1}$. Therefore, only those columns need to be computed. This can be done by first forming the matrix $U_{k+1} P_{k+1} L_{k+1}^{-T}$, then deleting the columns of this matrix corresponding to zero diagonal elements of $D_{k+1}$, and lastly multiplying the resulting matrix from the right by $( D_{k+1}^{\dagger})^{-1/2}$. The resulting matrix will be denoted $Q_{k+1}^{\dagger}$. To derive the new eigendecomposition, let $X_{k+1}$ be the matrix obtained by taking the identity matrix and deleting rows corresponding to zero diagonal elements of $D_{k+1}$. Also, let $L_{k+1}^{\dagger}$ be the matrix obtained by deleting columns in $L_{k+1}$ that would have been multiplied by the deleted diagonal elements of $D_{k+1}$ in the LDLT factorization. Then, the following identities hold:
\begin{equation}\label{eqn:removal_identities}
\begin{aligned}
    \sqrt{D_{k+1}^{\dagger}} &= X_{k+1} \sqrt{D_{k+1}} X_{k+1}^T,\\
    Q_{k+1}^{\dagger} &= Q_{k+1} X_{k+1}^T, \\
    \sqrt{D_{k+1}} &= X_{k+1}^T X_{k+1} \sqrt{D_{k+1}} = \sqrt{D_{k+1}} X_{k+1}^T X_{k+1}, \\
    L_{k+1}^{\dagger} &= L_{k+1} X_{k+1}^T.
\end{aligned}
\end{equation}
Using these identities in equation \eqref{eqn:update_LDP} gives
\begin{equation*}
\begin{aligned}
    \widehat B_{k+1} &= \alpha_k I + \\
    & Q_{k+1} X_{k+1}^T X_{k+1} \sqrt{D_{k+1}} X_{k+1}^T X_{k+1} L_{k+1}^T P_{k+1}^T \widehat C_{k+1} P_{k+1} \\
    &L_{k+1} X_{k+1}^T X_{k+1} \sqrt{D_{k+1}} X_{k+1}^T X_{k+1} Q_{k+1}^T \\
    &= \alpha_k I + Q_{k+1}^{\dagger} \sqrt{D_{k+1}^{\dagger}} {L_{k+1}^{\dagger}}^T P_{k+1}^T \widehat C_{k+1} P_{k+1} L_{k+1}^{\dagger} \sqrt{D_{k+1}^{\dagger}} {Q_{k+1}^{\dagger}}^T.
\end{aligned}
\end{equation*}
The computation of new eigenvectors and eigenvalues now proceeds as previously by computing an eigenvalue decomposition $V_{k+1} \Lambda_{k+1} V_k^T$ of \\ $\sqrt{D_{k+1}^{\dagger}} {L_{k+1}^{\dagger}}^T P_{k+1}^T \widehat C_{k+1} P_{k+1} L_{k+1}^{\dagger} \sqrt{D_{k+1}^{\dagger}}$ and identifying the columns of $Q^{\dagger}_{k+1} V_{k+1}$ as the new eigenvectors, resulting in an updated matrix on the form $\eqref{eqn:new_eigen}$. In total, this update has complexity $O(m^3 + nm^2)$.

\subsection{Trust region algorithms} \label{sec:TR}
Besides line search algorithms, another paradigm for controlling step length in iterative optimization is trust region algorithms. Instead of first determining the direction to move in, trust region algorithms begin by choosing a maximum step length and then determining the actual step by solving a model problem with this step length constraint. In what follows, a trust-region framework will be described. Given a problem on the form
\begin{equation*}
\begin{aligned}
& \underset{x \in \mathbb{R}^n}{\text{minimize}}
& & f(x), \\
\end{aligned}
\end{equation*}
where $f :\mathbb{R}^n \mapsto \mathbb{R}$ is twice continuously differentiable, the trust region algorithm solves a series of model problems on the form 
\begin{equation}\label{eqn:TRproblem}
\begin{aligned}
& \underset{p \in \mathbb{R}^n}{\text{minimize}}
& &g_k^Tp + \frac{1}{2}p^TB_{k}p \\
& \text{subject to}
& & \|p\|_2 \leq d_k, \\
\end{aligned}
\end{equation}
where $g_k$ is the gradient and $B_{k}$ the approximation of the Hessian of $f$ evaluated at the current candidate solution $x_k$. The solution $p_k$ to the trust region problem is then used to compute trial solution $x_{\rm trial} = x_k + p_k$. The trust region radius $d_k$ is increased, decreased or unchanged between iterations, depending on how well the predicted change in objective function agrees with the actual change $f(x_{\rm trial})-f(x_k)$, and $x_{\rm trial}$ is accepted as the new candidate solution if there is sufficient agreement. The algorithmic framework for a trust region algorithm, similar to the one presented by Sorensen \cite{trust_newton}, is summarized in Algorithm~\ref{alg:tr_alg_fr}.
\begin{algorithm}[H]\small
\caption{Basic framework for a trust region algorithm.}
\label{alg:tr_alg_fr}
\begin{algorithmic}
\item Specify constants $\eta_1, \eta_2, \gamma_1, \gamma_2$ such that $0< \eta_1< \eta_2<1$, $0<\gamma_1<1<\gamma_2$.
\State $k \gets 0$
\item Initialize $x_k$, $d_k$ 
\State $\text{converged} \gets \text{False}$
\While {not converged}
    \State $g_k \gets \nabla f(x_k)$
    \State $B_{k} \gets [\text{approximation of } \nabla^2 f(x_k)]$
    \State $p_k \gets [\text{solution of }$ \eqref{eqn:TRproblem}]
    \State $ared \gets f(x_k+p_k) - f(x_k)$
    \State $pred \gets g_k^Tp_k + \frac{1}{2}p_k^TB_{k}p_k$
    \If{$\dfrac{ared}{pred} < \eta_1$}
        \State $d_{k+1} \gets \gamma_1 d_k$
        \State $x_{k+1} \gets x_k$
    \ElsIf{$\dfrac{ared}{pred}> \eta_2$}
        \State $d_{k+1} \gets \gamma_2 d_k$
        \State $x_{k+1} \gets x_k + p_k$
    \Else 
        \State $x_{k+1} \gets x_k+p_k$
    \EndIf
    \State $\text{converged} \gets [\text{result of some convergence test}]$
    \State $k \gets k+1$
\EndWhile \\
\Return $x_k$
\end{algorithmic}
\end{algorithm}

To design a trust-region algorithm in the above framework, it is necessary to specify the parameters for trust region expansion and contraction, the method for approximating the Hessian, the solution method for the trust region problem \eqref{eqn:TRproblem}, and the convergence criterion. More general frameworks could also include different ways of updating the trust region and consider different choices of norm than the $l^2$ norm in \eqref{eqn:TRproblem}. One of the main ideas behind solution methods of \eqref{eqn:TRproblem} is the following theorem:
\begin{theorem}[{\cite[Theorem 2.1]{Locally_constrained_steps}}] \label{thm:trust_solution}
A vector $p$ is a global minimum of the trust region problem \eqref{eqn:TRproblem} if and only if $\|p\|_2 \leq d$ and there exists a unique $\sigma \geq 0$ such that 
\begin{equation}\label{eqn:tr_optimality}
    (B_{k}+\sigma I)p = -g_k, \quad \sigma(d_k-\|p\|_2)=0,
\end{equation}
with $B_{k} + \sigma I$ positive semidefinite. Moreover, if $B_{k}+\sigma I$ is positive definite, then the global minimizer is unique.
\end{theorem}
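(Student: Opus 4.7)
The plan is to prove this classical characterization via the standard two-direction argument, leveraging the quadratic structure of the objective $m(p) := g_k^T p + \tfrac12 p^T B_k p$. First I would establish the sufficient direction by a completion-of-square trick: given any $p$ satisfying the stated conditions and any feasible $q$ with $\|q\|_2 \leq d_k$, substitute $g_k = -(B_k+\sigma I)p$ into $m(q)-m(p)$ and rearrange to obtain
\begin{equation*}
m(q) - m(p) \;=\; \tfrac{1}{2}(q-p)^T(B_k+\sigma I)(q-p) \;+\; \tfrac{\sigma}{2}\bigl(\|p\|_2^2 - \|q\|_2^2\bigr).
\end{equation*}
The first term is nonnegative because $B_k+\sigma I \succeq 0$, and the second is nonnegative because the complementary slackness $\sigma(d_k-\|p\|_2)=0$ forces either $\sigma = 0$ or $\|p\|_2 = d_k \geq \|q\|_2$. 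This simultaneously yields the sufficient direction and the uniqueness claim: when $B_k+\sigma I \succ 0$, the first term is strictly positive for any $q \neq p$, so the minimizer is unique.

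For the necessary direction, I would apply the KKT conditions to the single inequality constraint $\tfrac12(\|p\|_2^2 - d_k^2) \leq 0$ (for which the linear independence constraint qualification holds trivially): this immediately yields $\sigma \geq 0$ with $(B_k+\sigma I)p = -g_k$ and $\sigma(d_k - \|p\|_2) = 0$. Uniqueness of $\sigma$ then follows: if two multipliers $\sigma_1 \neq \sigma_2$ both satisfied the stationarity relation at the same minimizer $p$, subtracting gives $(\sigma_1 - \sigma_2)p = 0$, hence $p = 0$; but then the stationarity collapses to $g_k = 0$ and either both multipliers satisfy the same complementary slackness trivially, or one can rule out the ambiguous case by going back to the identity above.

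The hard part will be showing that $B_k + \sigma I$ must be positive semidefinite globally, not just on the tangent space to the constraint at $p$, since the KKT second-order necessary condition only directly delivers the latter. I would handle this by cases on whether the constraint is active. If $\|p\|_2 < d_k$, then $\sigma = 0$ and the problem reduces to unconstrained minimization of $m$, which is bounded below only if $B_k \succeq 0$. If $\|p\|_2 = d_k$ and $B_k + \sigma I$ had an eigenvector $v$ with negative eigenvalue $\mu < 0$, I would construct a feasible curve $p(t) = p + tv + s(t)w$, where $w$ is a correction chosen to keep $\|p(t)\|_2 = d_k$ (possible by the implicit function theorem on the sphere because $p \neq 0$ in this case). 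Expanding $m(p(t))$ to second order in $t$, the leading nontrivial term is $\tfrac{t^2}{2} v^T(B_k+\sigma I)v = \tfrac{\mu t^2}{2} < 0$, contradicting optimality of $p$.

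Finally, the case $p = 0$ with $\|p\|_2 = d_k$ cannot arise since $d_k > 0$; and if $g_k = 0$, the characterization is easily verified with $\sigma = 0$ and $p = 0$ provided $B_k \succeq 0$, leaving no loose ends. The whole argument is a textbook Gay/More--Sorensen proof, and the only real obstacle is the sphere-to-full-space upgrade of the second-order condition described above.
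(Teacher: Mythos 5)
The paper does not actually prove this statement: it is imported verbatim as Theorem~2.1 of the cited reference on locally constrained steps, so your attempt can only be measured against the classical Gay/Mor\'e--Sorensen argument, which is indeed the one you are reconstructing. Your sufficiency direction is correct and complete: the identity
\begin{equation*}
m(q)-m(p)=\tfrac12 (q-p)^T(B_k+\sigma I)(q-p)+\tfrac{\sigma}{2}\bigl(\|p\|_2^2-\|q\|_2^2\bigr)
\end{equation*}
checks out, and it delivers both global optimality and the uniqueness claim under positive definiteness exactly as you say.

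The gap is in your resolution of what you yourself flag as the hard part: the upgrade from tangent-space to full-space positive semidefiniteness when $\|p\|_2=d_k$. A curve $p(t)=p+tv+s(t)w$ constrained to the sphere cannot have tangent $v$ at $t=0$ unless $v\perp p$: if $v^Tp\neq 0$, the correction $s(t)$ is necessarily $O(t)$ rather than $o(t)$, so to first order $p(t)-p$ is the \emph{tangential projection} of $v$, and the second-order term of $m(p(t))$ is the quadratic form evaluated at that projection, not at $v$. Your construction therefore only proves $u^T(B_k+\sigma I)u\ge 0$ for $u\in p^{\perp}$ --- precisely the tangent-space condition you were trying to go beyond --- and in the worst case, $v$ parallel to $p$, no such curve exists at all. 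The standard repair uses the identity you already derived: for any $w$ with $w^Tp\neq0$, the reflected point $q=p-2\frac{p^Tw}{\|w\|_2^2}w$ satisfies $\|q\|_2=\|p\|_2=d_k$ and $q-p$ is parallel to $w$, so global optimality of $p$ together with the vanishing of the $\sigma$-term forces $w^T(B_k+\sigma I)w\ge 0$; continuity extends this to all $w$. (Your uniqueness-of-$\sigma$ discussion also ends ambiguously; the clean statement is that if $p=0$ then $\|p\|_2<d_k$, so complementary slackness forces $\sigma=0$.)
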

Solving the trust region problem is thus equivalent to finding $\sigma$ and $p$ that satisfy the conditions above. As $\sigma\geq 0$ and $\sigma I + B_k$ should be positive semidefinite, $\sigma \geq \max(0,-\lambda_1(B_{k}))$. The solution of the resulting system of equations and inequalities can be divided into cases based on whether the optimal $\sigma$ fulfills this inequality strictly or not.
\paragraph{Case 1, $\sigma > (\max(0,-\lambda_1(B_{k}))):$}
In this case, $B_{k}+\sigma I$ will be positive definite, the equality $\sigma(d_k-\|p\|_2) = 0$ can only be fulfilled when $\|p\|_2 = d_k$ and $p$ can be uniquely defined as a function of $\sigma$ by $p(\sigma)=-(B_{k}+\sigma I)^{-1}g_k$. Then, the problem is reduced to finding a $\sigma$ such that $\|p(\sigma)\|_2 = d_k$. According to a review of trust region methods by Moré \cite{trust_region_review}, a numerically suitable way of doing so is to apply Newton's method for zero-finding to the function $\psi(\sigma) = \frac{1}{d_k}-\frac{1}{\|p(\sigma)\|_2} $. The Newton iterations for finding an optimal $\sigma$ can be described by the formula: 
\begin{equation}\label{eqn:sigmaupdate}
\sigma_{new} = \sigma + \dfrac{\|p(\sigma)\|_2^2}{p(\sigma)^T(B_{k}+\sigma I)^{-1}p(\sigma)}\Big(\dfrac{\|p(\sigma)\|_2 - d_k}{d_k} \Big). 
\end{equation}
A direct computation of the matrix inverse in this formula can be computationally expensive. In \cite{Trust_region_step}, a Cholesky factorization of $(B_{k}+\sigma I)$ is used to avoid this, and any method utilizing this type of iteration should address this problem. Another issue to address is that with a finite number of iterations of \eqref{eqn:sigmaupdate}, $p(\sigma)$ is not guaranteed to lie in the trust region. In order to remedy this, a tolerance parameter $\varepsilon$ can be chosen to define a contracted trust region radius $\Delta_k = \frac{d_k}{1+\varepsilon}$. The iterations can then be stopped when the convergence criterion $\frac{|\|p\|_2-\Delta_k|}{\Delta_k} < \varepsilon$ is satisfied. Lemma~3.13 in~\cite{Trust_region_step} implies that if $Q$ is the objective value of \eqref{eqn:TRproblem} for a solution that satisfies this convergence criterion and $Q^*$ is the optimal value of this problem, then 
\begin{equation}\label{eqn:tr_opt_sol_result}
    Q^* \leq Q \leq (1-\varepsilon)^2 Q^*.
\end{equation}
To ensure that new iterates do not fall outside the interval $(\max(0,-\lambda_1(B_{k})),\infty)$, a safeguarding technique can be used \cite{Trust_region_step}, e.g. setting the new $\sigma$ to the mean of the previous one and $\max(0,-\lambda_1(B_{k}))$ instead of performing the update in equation \eqref{eqn:sigmaupdate} when that happens.
\paragraph{Case 2: $\sigma = 0 < \lambda_1$:} In this case, $B_{k}$ must be positive definite, and the unique $p$ such that $B_{k}p = -g_k$, i.e. the solution of the unconstrained problem corresponding to \eqref{eqn:TRproblem}, has to lie inside the trust region. Detecting if this has occurred can be done by checking whether $\|B_k^{-1}g_k\|_2 \leq d_k$. 
\paragraph{Case 3: $\sigma = -\lambda_1 \geq 0$:}The case where $\sigma = - \lambda_1(B_{k})$ and $\lambda_1(B_{k}) \leq 0$ is referred to as the ``hard case'' in the literature. Solutions to $(B_{k} - \lambda_1(B_{k})I)p = -g_k$ are not unique, but one solution is $p^* = - (B_{k}-\lambda_1(B_{k})I)^{\dagger} g_k$, where $(B_{k} -\lambda_1(B_{k}) I)^{\dagger}$ denotes the Moore-Penrose pseudoinverse of $B_{k} -\lambda_1(B_{k}) I$. The hard case occurs if and only if $\|p^*\|_2 \leq d_k$. A solution to the trust region problem in this case is given by $p_k = p^* + \tau z$, where $z$ is any eigenvector in the eigenspace of $\lambda_1(B_{k})$ and $\tau = \frac{\sqrt{(\|d_k\|_2^2-\|p^*\|_2^2)}}{\|z\|_2}$. 

Algorithm~\ref{alg:tr_sol} summarizes the solution framework resulting from the above discussion.

\begin{algorithm}[H]\small
\caption{Framework for a solution algorithm to the trust region problem}
\label{alg:tr_sol}
\begin{algorithmic}
\item Specify $B_k$, $g_k$, $d_k$, $\varepsilon$
\If{$\lambda_1(B_k) > 0$} 
    \If{$\|B_k^{-1}g_k\|_2 \leq d_k $} 
        \State $p \gets -B_k^{-1}g_k$
    \Else 
        \State Initialize $\sigma > 0$
        \State $\Delta_k \gets \frac{d_k}{1+\varepsilon}$
        \While{$\frac{|\|p\|_2-\Delta_k|}{\Delta_k}>\varepsilon$}
            \State Compute $\|p\|_2$ and $p^T(B_k+\sigma I)^{-1}p$, where $ p = -(B_k + \sigma I)^{-1}g_k$
            \State $\sigma_{\rm trial} \gets \sigma + \dfrac{\|p\|_2^2}{p^T(B_k+\sigma I)^{-1}p}\Big(\dfrac{\|p\|_2 - \Delta_k}{\Delta_k} \Big).$ 
            \If{$\sigma_{\rm trial} \leq 0$}
                \State $\sigma \gets \frac{\sigma}{2}$
            \Else
                \State $\sigma \gets \sigma_{\rm trial}$ 
            \EndIf
        \EndWhile
        \State $p \gets -(B_k + \sigma I)^{-1}g_k$
    \EndIf
\Else
    \If{$\|(B_k-\lambda_1(B_k))^{\dagger}g_k\|_2 \leq d_k$}
        \State $p^* \gets -(B_k - \lambda_1(B_k)I)^{\dagger}g_k$
        \State $z \gets [ \text{Some eigenvector in the eigenspace of } \lambda_1(B_k) ]$
        \State $\tau \gets \frac{\sqrt{d_k^2 - \|p\|_2^2}}{\|z\|_2}$
        \State $p \gets p^* + \tau z$
    \Else 
        \State Initialize $\sigma > - \lambda_1(B_k)$
        \State $\Delta_k \gets \frac{d_k}{1+\varepsilon}$
        \While{$\frac{|\|p\|_2-\Delta_k|}{\Delta_k}>\varepsilon$}
            \State Compute $\|p\|_2$ and $p^T(B_k+\sigma I)^{-1}p$, where $ p = -(B_k + \sigma I)^{-1}g_k$
            \State $\sigma_{\rm trial} \gets \sigma + \dfrac{\|p\|_2^2}{p^T(B_k+\sigma I)^{-1}p}\Big(\dfrac{\|p\|_2 - \Delta_k}{\Delta_k} \Big).$ 
            \If{$\sigma_{\rm trial} \leq -\lambda_1(B_k)$}
                \State $\sigma \gets \frac{\sigma-\lambda_1(B_k)}{2}$
            \Else
                \State $\sigma \gets \sigma_{trial}$ 
            \EndIf
        \EndWhile
        \State $p \gets -(B_k + \sigma I)^{-1}g_k$
    \EndIf
\EndIf \\
\Return p
\end{algorithmic}
\end{algorithm}
We will now state a general convergence result for trust-region methods, valid not only for model problems with quadratic objective functions and trust regions limited by the $2$-norm. The theorem allows for the norm used to limit the trust region to change between iterations, denoting the norm used in iteration $k$ by $\|\cdot \|_k$. To distinguish this norm in iteration $2$ from the $2$-norm, we will denote the latter by $\|\cdot\|$ when stating the theorem.
\begin{theorem}[{{\cite[page 137]{Trust_region_methods}}}]\label{thm:trust-convergence}
Assume that the problem 
\begin{equation*}
\begin{aligned}
& \underset{x \in \mathbb{R}^n}{\text{minimize}}
& & f(x) \\
\end{aligned}
\end{equation*}
is solved by an iterative trust region algorithm, where the candidate solution is updated as $x_{k+1} = x_k + p_k$, and $p_k$ is obtained as the solution to the following trust region subproblem
\begin{equation*}
\begin{aligned}
& \underset{p_k \in \mathbb{R}^n}{\text{minimize}}
& & m_k(x_k+ p_k) \\
& \text{subject to}
& & \|p\|_k \leq d_k. \\
\end{aligned}
\end{equation*}
Here, $m_k(x)$ is a model function of $f(x)$ and $\|\cdot \|_k$ is a norm used to define the trust region in iteration $k$. Denote this trust region by $\mathcal{B}_k$ and assume that the following conditions are satisfied:
\begin{itemize}
    \item[(a)] The objective function $f(x)$ is twice continuously differentiable on $\mathbb{R}^n$
    \item[(b)] There exists a constant $\kappa_{lbf}$ such that $f(x) \geq \kappa_{lbf} \forall x\in\mathbbm{R}^n$.
    \item[(c)] There exists a positive constant $\kappa_{ufh}$ such that $\|\nabla^2f(x)\| \leq \kappa_{ufh} \forall x \in \mathbbm{R}^n$.
    \item[(d)] For all $k$, $m_k$ is twice differentiable.
    \item[(e)] For all $k$, $m_k(x_k) = f(x_k)$.
    \item[(f)] For all $k$, $g_k:= \nabla m_k(x_k) = \nabla f(x_k)$.
    \item[(g)] The Hessian of the model is uniformly bounded within the trust region over all iterations, \emph{i.e.} $\|\nabla^2 m_k(x)\| \leq \kappa_{umh} -1 \ \forall x \in \mathcal{B}_k,\ \forall k$, for some constant $\kappa_{umh}\geq 1$ independent of $k$.
    \item[(h)] There exists a constant $\kappa_{une} \geq 1$ such that for all $k$, $\frac{1}{\kappa_{une}}\|x\|_k \leq \|x\| \leq \kappa_{une}\|x\|_k \ \forall x \in \mathbbm{R}^n$.
    \item[(i)] Let $\beta_k = 1 + \max_{x \in \mathcal{B}_k }\| \nabla^2 m_k(x) \|$. Then, for all $k$, it holds that $m_k(x_k) - m_k(x_k + p_k) \geq \\ \kappa_{mdc}\|g_k\| \min \left [ 
    \frac{\|g_k\|}{\beta_k},d_k\right]$ with $\kappa_{mdc} \in (0,1)$.
\end{itemize}
Then, the trust region algorithm will converge to a critical point of $f(x)$.
\end{theorem}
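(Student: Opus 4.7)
The plan is to follow the classical contradiction argument for trust-region convergence (in the spirit of Conn--Gould--Toint), showing that $\liminf_{k\to\infty}\|g_k\| = 0$ and hence that the sequence approaches a critical point. First, I would establish a uniform Taylor-expansion error bound: using (a), (c), (d), (g) together with the matching conditions (e), (f), I would show that
\begin{equation*}
\bigl| f(x_k+p_k) - m_k(x_k+p_k)\bigr| \le K\,\|p_k\|^2
\end{equation*}
for a constant $K$ depending only on $\kappa_{ufh}$ and $\kappa_{umh}$. Because the step is constrained by $\|p_k\|_k \le d_k$, the norm-equivalence hypothesis (h) yields $\|p_k\| \le \kappa_{une} d_k$, so this error is at most $K\kappa_{une}^2 d_k^2$.

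Next, assuming for contradiction that $\|g_k\|$ does \emph{not} converge to zero, I would pick $\epsilon>0$ and an infinite index set $\mathcal{K}$ with $\|g_k\|\ge\epsilon$ for $k\in\mathcal{K}$. Combining (g) (which gives $\beta_k\le\kappa_{umh}$) with (i), each such iteration has predicted decrease
\begin{equation*}
\text{pred}_k \;\ge\; \kappa_{mdc}\,\epsilon\, \min\!\Bigl[\tfrac{\epsilon}{\kappa_{umh}},\,d_k\Bigr].
\end{equation*}
When $d_k$ is below a threshold $d^\star(\epsilon)$ determined by $\epsilon$, $\kappa_{mdc}$, $\kappa_{umh}$, $K\kappa_{une}^2$ and $\eta_2$, the ratio $|\text{ared}_k/\text{pred}_k - 1|$ is forced under $1-\eta_2$, so the iteration is "very successful" and the radius is \emph{not} decreased (in fact $d_{k+1}\ge d_k$). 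Pairing this with the contraction rule $d_{k+1}\ge\gamma_1 d_k$ on unsuccessful iterations, I would conclude that along $\mathcal{K}$ the radii are bounded below by $d_{\min} := \gamma_1 d^\star(\epsilon)$.

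With $d_k\ge d_{\min}$ on the tail of $\mathcal{K}$, each successful iteration with $\|g_k\|\ge\epsilon$ yields the fixed decrease
\begin{equation*}
f(x_k) - f(x_{k+1}) \;\ge\; \eta_1\kappa_{mdc}\,\epsilon\,\min\!\Bigl[\tfrac{\epsilon}{\kappa_{umh}},\,d_{\min}\Bigr] \;=: \;\delta > 0.
\end{equation*}
A standard counting argument, using the fact that unsuccessful steps cannot occur consecutively forever without the radius shrinking enough to re-enter the "very successful" regime established above, shows that infinitely many successful iterations along $\mathcal{K}$ occur, each decreasing $f$ by at least $\delta$. Summing the decreases drives $f(x_k)\to-\infty$, contradicting (b). Hence $\liminf_k\|g_k\|=0$, and a standard refinement (replacing $\epsilon$ by $\epsilon/2$ on a "bad" subsequence and using continuity of $\nabla f$) upgrades this to $\lim_k\|g_k\|=0$, so every limit point is critical.

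The main obstacle is the careful interplay between the iteration-dependent norm $\|\cdot\|_k$ and the fixed $2$-norm used in the Taylor estimate and the Hessian bounds: condition (h) gives clean conversions, but one must track how $\kappa_{une}$ enters $d^\star(\epsilon)$ to ensure the threshold is uniform in $k$. A secondary technicality is making rigorous the claim that unsuccessful iterations cannot accumulate indefinitely without triggering a successful one; this is handled by observing that after finitely many contractions by $\gamma_1$ starting from any $d_k$, the radius falls below $d^\star(\epsilon)$, at which point the previous step shows the iteration must be successful.
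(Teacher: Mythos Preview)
The paper does not actually prove this theorem: it is quoted verbatim as a known result from the trust-region monograph cited as \cite[page 137]{Trust_region_methods} (Conn--Gould--Toint), and the surrounding discussion in the appendix only verifies that the authors' specific algorithm satisfies hypothesis~(i) via the Cauchy-point bound. So there is no ``paper's own proof'' to compare against.

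That said, your sketch is a faithful outline of the classical Conn--Gould--Toint argument that underlies the cited result, and the overall logic is sound. Two small points of care are worth flagging. First, for the $\liminf$ step your contradiction hypothesis should be that $\|g_k\|\ge\epsilon$ for \emph{all} sufficiently large $k$ (i.e., a cofinite set), not merely along an infinite subsequence $\mathcal{K}$; otherwise the lower bound on $d_k$ can fail at indices outside $\mathcal{K}$ where the gradient may be small. Second, the ``standard refinement'' from $\liminf$ to $\lim$ is the most delicate part of the full proof: it requires pairing a subsequence with $\|g_k\|\ge 2\epsilon$ against the first subsequent index where $\|g_k\|<\epsilon$, summing the step lengths of the intervening successful iterations (bounded via the total $f$-decrease and hypothesis~(b)), and then invoking Lipschitz continuity of $\nabla f$ (from~(c)) to derive a contradiction. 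Your one-line mention of this step is fine for a sketch, but it hides the place where assumptions~(b) and~(c) are genuinely needed together.
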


An attentive reader might note that the trust-region problem \eqref{eqn:TRproblem} does not have an objective function that directly fulfills the assumptions of Theorem~\ref{thm:trust-convergence}. However, the problem is equivalent to a trust region problem that uses such a model function. To see this, consider $m_k(x) = f(x_k) + g_k^T(x-x_k) + \frac{1}{2}(x - x_k)^T B_{k} (x - x_k)$. The trust region problem \eqref{eqn:TRproblem} is obtained with the change of variables $p = x - x_k$ and by ignoring the constant term $f(x_k)$, since doing so does not affect the optimization.
 
The last assumption of Theorem \ref{thm:trust-convergence} is a sufficient decrease assumption, and can be proven to hold for trust region algorithms that utilize quadratic model functions by considering the Cauchy point $x_k^C$, which is the minimizer of $m_k(x)$ under the constraint that $x$ lies on the arc $\{x:x = x - t g_k, t \geq 0, x \in \mathcal{B}_k\}$. In \cite{Trust_region_methods}, the following theorem about the Cauchy point is proven, and utilized to show that the sufficient decrease condition holds for quadratic model functions:
\begin{theorem}[{{\cite[page 127]{Trust_region_methods}}}] \label{thm:C_point}
With a quadratic model function $m_k(x) = f(x_k) + g_k^T(x-x_k) + \frac{1}{2}(x - x_k)^TB_k(x - x_k)$, the Cauchy point fulfills the inequality 
\begin{equation*}
    m_k(x_k) - m_k(x_k^C) \geq \frac{1}{2} \|g_k\|^2 \min \left\{ \frac{1}{\beta_k},\frac{d_k}{\|g_k\|_k}\right\}.
\end{equation*}
\end{theorem}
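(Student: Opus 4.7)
The plan is to treat this as a straightforward one-dimensional optimization along the steepest-descent arc, then split into cases based on the sign of the curvature $g_k^T B_k g_k$. Parametrize the arc as $x(t) = x_k - t g_k$ for $t \geq 0$, so that the constraint $x(t) \in \mathcal{B}_k$ becomes $t \in [0, t_{\max}]$ with $t_{\max} = d_k/\|g_k\|_k$. Substituting into the quadratic model gives the scalar function
\begin{equation*}
    \phi(t) = m_k(x_k) - t\|g_k\|^2 + \tfrac{1}{2} t^2 \, g_k^T B_k g_k,
\end{equation*}
and the Cauchy point is obtained by minimizing $\phi$ on $[0, t_{\max}]$. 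Since $\nabla^2 m_k \equiv B_k$ in the quadratic setting, the definition of $\beta_k$ gives $\beta_k = 1 + \|B_k\|$, and in particular $g_k^T B_k g_k \leq \|B_k\|\,\|g_k\|^2 \leq \beta_k \|g_k\|^2$, a bound I will use repeatedly.

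First I would handle the case $g_k^T B_k g_k \leq 0$. Here $\phi$ is concave and nonincreasing on $[0,\infty)$, so its minimum on the feasible interval is at $t_{\max}$. Direct substitution yields
\begin{equation*}
    m_k(x_k) - \phi(t_{\max}) \;=\; t_{\max}\|g_k\|^2 - \tfrac{1}{2} t_{\max}^2 g_k^T B_k g_k \;\geq\; t_{\max}\|g_k\|^2 \;=\; \|g_k\|^2\,\frac{d_k}{\|g_k\|_k},
\end{equation*}
which is at least $\tfrac{1}{2}\|g_k\|^2 \min\{1/\beta_k,\,d_k/\|g_k\|_k\}$, as required.

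Next I would handle the case $g_k^T B_k g_k > 0$. Then $\phi$ is a strictly convex parabola in $t$ with unconstrained minimizer $t^\star = \|g_k\|^2/(g_k^T B_k g_k)$. If $t^\star \leq t_{\max}$, the Cauchy point corresponds to $t^\star$ and
\begin{equation*}
    m_k(x_k) - \phi(t^\star) \;=\; \frac{\|g_k\|^4}{2\,g_k^T B_k g_k} \;\geq\; \frac{\|g_k\|^2}{2\beta_k},
\end{equation*}
using the bound on $g_k^T B_k g_k$ noted above. If $t^\star > t_{\max}$, the minimum on the interval is at $t_{\max}$; moreover $t^\star > t_{\max}$ rearranges to $g_k^T B_k g_k < \|g_k\|^2/t_{\max}$, so
\begin{equation*}
    m_k(x_k) - \phi(t_{\max}) \;=\; t_{\max}\|g_k\|^2 - \tfrac{1}{2}t_{\max}^2\, g_k^T B_k g_k \;>\; t_{\max}\|g_k\|^2 - \tfrac{1}{2}t_{\max}\|g_k\|^2 \;=\; \tfrac{1}{2}\|g_k\|^2\,\frac{d_k}{\|g_k\|_k}.
\end{equation*}
Combining the three sub-results gives the stated lower bound in all cases.

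The argument is essentially just elementary one-dimensional calculus, so there is no real obstacle beyond being careful with the case split at $t^\star = t_{\max}$ and noting that the norm used to measure the trust region, $\|\cdot\|_k$, only enters through $t_{\max}$, so no invocation of the norm-equivalence constant $\kappa_{une}$ is needed at this stage.
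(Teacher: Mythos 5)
Your proof is correct. The paper does not actually prove this statement---it imports it by citation from the trust-region literature---so there is no in-paper argument to compare against; your case analysis along the steepest-descent arc (concave/indefinite curvature, interior minimizer, and boundary minimizer) is the standard textbook derivation of the Cauchy-point decrease, and you handle the one genuinely delicate point correctly, namely that the trust-region norm $\|\cdot\|_k$ enters only through $t_{\max} = d_k/\|g_k\|_k$ while the decrease itself is measured with the Euclidean norm $\|\cdot\|$, so no norm-equivalence constant is needed.
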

In order to use the theorem to prove that the sufficient decrease condition holds for algorithms in the trust-region framework that we have presented above, let $x_k^M$ be the actual minimum of $m_k$ in the trust region. The result proven in \cite{Trust_region_step} and described by Equation \ref{eqn:tr_opt_sol_result} implies that when $p_k$ is computed according to Algorithm~\ref{alg:tr_sol}, $m_k(x_k) - m(x_k + p_k) \geq (1 - \varepsilon)^2(m_k(x_k) - m_k(x_k^M))$. Since $m_k(x_k^M) \leq m_k(x_k^C)$, Theorem~\ref{thm:C_point} gives that
\begin{equation*}
\begin{aligned}
    &m_k(x_k) - m(x_k + p_k) \geq (1 - \varepsilon)^2(m_k(x_k) - m_k(x_k^C)) \\
    &\geq (1 - \varepsilon)^2 \frac{1}{2} \|g_k\|^2 \min \left\{ \frac{1}{\beta_k},\frac{d_k}{\|g_k\|}\right\},
\end{aligned}
\end{equation*}
implying that the sufficient decrease assumption is fulfilled with $\kappa_{mdc} = \frac{1}{2}(1 - \varepsilon)^2$.

\end{document}